\newtheorem{theorem}{Theorem}[section]
\newtheorem{definition}[theorem]{Definition}
\newtheorem{proposition}[theorem]{Proposition}
\newtheorem{corollary}[theorem]{Corollary}
\newtheorem{remark}[theorem]{Remark}
\def\*<{\,*\!\!<}
\def\<*{<\!\!*\,}
\def\R{{\rm{Im}\,}}
\def\N{{\rm{Ker}\,}}
\def\G{{\mathcal{G}}}
\begin{document}

\vspace{2.5cm}

\small{\noindent This manuscript was submitted in  "Revista de la Real Academia de Ciencias Exactas,  F\'isicas y Naturales. Serie A. Matem\'aticas" on March 21, 2022.}

\vspace{1.5cm}

\begin{center}
\Large{\bf 1MP and MP1 inverses and one-sided star orders \\
in a ring with involution}
\end{center}

\begin{center}
{\bf Dragan S. Raki\'c and Martin Z. Ljubenovi\'c} \footnote{The research is financially supported by the  Ministry of Education, Science and Technological Development, Republic of Serbia, Grant No. 451-03-68/2022-14/ 200109 and by the bilateral project between Serbia and Slovenia (Generalized inverses, operator equations and applications, Grant No. 337-00-21/2020-09/32).}
\end{center}

\begin{abstract}
The classes of 1MP-inverses and MP1-inverses are recently introduced classes of generalized inverses of complex matrix. Actually, they coincide with the classes of $\{1,2,3\}$ and $\{1,2,4\}$ inverses, respectively.
We consider these inverses in the context of a ring with involution and prove that their most important characterizations and properties remain true.
We show that the binary relations based on these inverses are in fact the well known left-star and right-star partial orders.
We extend these relations to the ring case, connect them with the unified theory of partial order relations based on generalized inverses and provide several properties. Finally, we indicate how these results can be applied to bounded Hilbert space operators.

\medskip

{\it  Mathematics Subject Classification\/}: 16U90, 06A06, 15A09.
\smallskip

{\it Keywords and phrases\/}: One-sided star partial order, 1MP-inverse, $\{1,2,3\}$-inverse, Ring with involution
\end{abstract}

\section{Introduction and preliminaries}

Let $A\in \mathbb{C}^{m\times n}$, where $\mathbb{C}^{m\times n}$ is the set of $m\times n$ complex matrices. We denote by $A^*$, $\R A$ and $\N A$ the conjugate transpose, column space and null space of $A$ respectively.
A matrix $A^-\in \mathbb{C}^{n\times m}$ is called a $g$-inverse of $A$ if the equation $AA^-A=A$ is satisfied.
The set of all $g$-inverses of $A$ is denoted by $A\{1\}$.
If $AXA=A$ and $XAX=X$ then $X\in \mathcal{C}^{n\times m}$ is called a reflexive $g$-inverse of $A$.
Recall that the Moore-Penrose inverse of $A$ is the unique matrix $A^\dag\in\mathbb{C}^{n\times m}$ which satisfies the equations
$$(1)\;\, AA^\dag A=A\quad (2)\;\, A^\dag AA^\dag=A^\dag \quad (3) \;\,(AA^\dag)^*=AA^\dag \quad (4)\;\, (A^\dag A)^*=A^\dag A.$$

Hern\'andez et al. in \cite{1MP} considered the following equivalence relation on $A\{1\}$.
For $A^-, A^= \in A\{1\}$,
$$A^-\sim_l A^= \;\; \Leftrightarrow\;\; A^-A=A^=A.$$
It turns out that the simplest representative (relative to singular value decomposition of $A$) of the class $[A^-]_{\sim_l}$ is given by the matrix $A^-AA^\dag$.
The authors in \cite{1MP} defined a new class of $g$-inverses of $A$ as the set of all canonical representatives of the quotient space $A\{1\}/\sim_l$.
They analogously defined the dual class as well.
\begin{definition}(\cite{1MP}).
Let $A\in\mathbb{C}^{m\times n}$. For each $A^-\in A\{1\}$, the matrix $A^{-\dag}=A^-AA^\dag$ is called a 1MP-inverse of $A$ and the set of all 1MP-inverses of $A$ is denoted by $A\{-\dag\}$. Similarly, the matrix $A^{\dag -}=A^\dag AA^-$ is called a MP1-inverse of $A$ and the set of all MP1-inverses of $A$ is denoted by $A\{\dag -\}$.
\end{definition}

Among other applications, $g$-inverses are used to define nice binary relations. This is the case with 1MP and MP1-inverses as well. For $A,B\in\mathbb{C}^{m\times n}$ the relation $<^{-\dag}$ was defined in \cite{1MP} in the following way:
\begin{equation}\label{eq15<-+}
A<^{-\dag}B \; \Leftrightarrow \; AA^{-\dag}=BA^{-\dag} \, \text{ and } \; A^{-\dag}A=A^{-\dag}B \, \text{ for some } \, A^{-\dag}\in A\{-\dag\}.
\end{equation}
Similarly,
\begin{equation}\label{eq15<+-}
A<^{\dag -}B \; \Leftrightarrow \; AA^{\dag -}=BA^{\dag -} \, \text{ and } \; A^{\dag -}A=A^{\dag -}B \, \text{ for some } \, A^{\dag -}\in A\{\dag -\}.
\end{equation}

Our aim is to generalize and investigate the 1MP and MP1 inverses and related binary relations (\ref{eq15<-+}) and (\ref{eq15<+-}) in the context of an arbitrary ring with involution.
We show that these notions are actually $\{1,2,3\}$-inverse, $\{1,2,4\}$-inverse, the left-star order and the right-star order. Although the ring setting is more general then complex matrix setting, we will prove almost all known important properties of these inverses and relations. It shows that the nature of these notions is, to a large degree, purely algebraic.

It is clear that in the proofs in the ring case, we cannot use the standard linear algebra techniques which are dominant in the matrix case. So we use different approaches and methods. One approach is the matrix representation of ring elements with respect to appropriate set of idempotents. This approach is described at the end of this section.

This paper is organized as follows.

By the end of this section we will give the basic definitions and set up notation.

In Section 2 we will focus on the complex matrix case. We will first note that the set $A\{-\dag\}$ coincides with the set of all reflexive least square $g$-inverses of $A$ and that $A\{\dag -\}$ is equal to the set of all reflexive minimum norm $g$-inverses of $A$. We will give several other characterizations of these inverses. The most important result in Section 2 is Theorem \ref{th152} where we show that the relations $<^{-\dag}$ and $<^{\dag -}$ are actually the well-known left-star partial order ($\*<$) and right star partial order ($\<*$), respectively. Recall that these relations was defined by Baksalary and Mitra in \cite{BaksalaryMitra}:
\begin{align}\label{eq15*<}
A\*< B&  \; \Leftrightarrow \; A^*A=A^*B  \, \text{ and } \; \R A\subseteq \R B \\
A\<* B&  \; \Leftrightarrow \; AA^*=BA^*  \, \text{ and } \; \R A^*\subseteq \R B^* \nonumber.
\end{align}

In Section 3 we show that 1MP-inverse can be introduced for elements of arbitrary ring with involution in the same way as it is done in \cite{1MP} for complex matrices. We show that
\begin{align*}
a\{-\dag\}&=a\{1,2,3\}\\
&=\{h+(1-ha)wah : w\in R\} \\
&=\{(a^*a)^-a^*:(a^*a)^-\in (a^*a)\{1\}\}
\end{align*}
like in the matrix case.
So, the most important characterizations stay valid in the ring case.

The definition of the relation $<^{-\dag}$ is algebraic, so we can extend it in the ring context. In Section 4, we introduce the left-star order relation $\*<$ in a ring by analogy with complex matrix case. Then we show that $a<^{-\dag}b$ if and only if $a\*< b$ when $a,b\in R^{(1,3)}$, (see below for notation). We will examine this order through the Mitra's unified theory of partial orders based on generalized inverses, \cite{Unified_theory} and \cite{Jedinstvena_teorija}. In particular, we present in Theorem \ref{th158} very useful simultaneous diagonalizations of $a$ and $b$ when $a\*< b$. This result has interesting interpretation when $a$ and $b$ are bounded Hilbert space operators with closed ranges.
Of course, we will prove that $\*<$ is indeed the partial order relation on $R^{(1,3)}$. Moreover, we will show in Theorem \ref{th157} that
$$a\*< b \; \Leftrightarrow \;b\{1,3\}\subseteq a\{1,3\}$$
which is highly nontrivial result even in the complex matrix case, \cite{BaksalaryMitra}.

In the Section 5 we present the dual results for MP1-inverse (ie. $\{1,2,4\}$-inverse) and for $<^{\dag -}$ relation (ie. right-star order).

In the last Section 6, we will give adequate interpretations of the presented results in the case of bounded Hilbert space operators.

We will now give some preliminaries.
Let $R$ denotes a ring with involution $*$ and the multiplicative identity 1. Like in the matrix case, if $a\in R$ and there exist $x\in R$ such that $axa=a$ then we say that $a$ is regular and $x$ is $g$-inverse of $a$. If in addition $xax=x$ then $x$ is called a reflexive $g$-inverse of $a$.
The Moore-Penrose inverse of $a$ is the element $a^\dag\in R$ which satisfies the equations
$$(1)\;\; axa=a\quad\quad (2)\;\; xax=x\quad\quad (3)\;\; (ax)^*=ax\quad\quad (4)\;\; (xa)^*=xa.$$
The Moore-Penrose inverse is unique in the case when it exists.
If $x$ satisfies equations $i_1, i_2, \dots, i_n$ then $x$ is called $\{i_1, i_2, \dots, i_n\}$-inverse of $a$. By $a\{i_1, i_2, \dots, i_n\}$ and $R^{(i_1, i_2, \dots, i_n)}$ we denote respectively the set of all $\{i_1, i_2, \dots, i_n\}$-inverses of $a$ and the set of all elements in $R$ that possess a $\{i_1, i_2, \dots, i_n\}$-inverse. For short, we will denote by $R^\dag$ the set of all Moore-Penrose invertible elements in $R$.

As usual, for $a\in R$, $Ra=\{xa:x\in R\}$ and $aR=\{ax:x\in R\}$.

An element $e\in R$ is idempotent (self-adjoint idempotent) if $e^2=e$ ($e^2=e=e^*$).
\begin{definition}\label{def15p,q_invertibilnost} 
  Let $p$ and $q$ be idempotents in $R$.
  We say that an element $a\in R$ is $(p,q)$-invertible if $a\in pRq$ and there exists $a^-_{p,q}\in qRp$ such that
$$aa^-_{p,q}=p \;\; \text{ and} \;\; a^-_{p,q}a=q.$$
 In this case we say that $a^-_{p,q}$ is the $(p,q)$-inverse of $a$.
\end{definition}
It is easy to see that for $a\in R$ there exist idempotents $p$ and $q$ such that $a$ is $(p,q)$-invertible if and only if $a$ is regular. When it exists, the $(p,q)$-inverse is unique.
If $a$ is $(p,q)$-invertible and $b\in pR$ then the equation $ax=b$ has the unique solution in the set $qR$, namely $x=a^-_{p,q}b$. Similarly, if $b\in Rq$ then the equation $xa=b$ has the unique solution in the set $Rp$ given by $x=ba^-_{p,q}$.

The idempotents $e_1,e_2,\dots,e_n\in R$ are called mutually orthogonal if they are orthogonal in pairs, that is, if $e_ie_j=0$ for $i\neq j$. If $e_1,e_2,\dots ,e_n\in R$ are mutually orthogonal idempotents such that
\begin{equation}\label{eq15sumaIdemp1}
1=e_1+e_2+\dots +e_n
\end{equation}
then the equality (\ref{eq15sumaIdemp1}) is called a decomposition of the identity of the ring $R$. The decomposition of the identity is orthogonal if $e_i$, $i=1,\dots,n$ are self-adjoint.
The following observation was provided in \cite{Decomposition_of_identity}.
Let $1=e_1+\dots+e_m$ and $1=f_1+\dots+f_n$ be two decompositions of the identity of a ring $R$. For any $x\in R$ we have
$$x=\left(\sum\limits_{i=1}^m e_i\right)x\left(\sum\limits_{j=1}^n f_j\right)=\sum\limits_{i=1}^m\sum\limits_{j=1}^n e_ixf_j= \bmatrix x_{11} & \cdots & x_{1n} \\ \vdots & \ddots & \vdots \\ x_{m1} & \cdots & x_{mn}\endbmatrix_{e\times f},$$
where $x_{ij}=e_ixf_j$. Note that the representation of an $x$ as $x=[x_{ij}]_{e\times f}$ is unique.

If $y=[y_{ij}]_{e\times f}$ then $x+y$ can be interpreted as addition of two matrices over $R$.
Let $1=g_1+\cdots +g_k$ be another decomposition of the identity of $R$ and let $z=[z_{jl}]_{f\times g}$, $z_{jl}=f_jzg_l$. As $f_if_j=0$ for $i\neq j$, it is easy to see that the product $xz$ can be calculated as the multiplication of two matrices over $R$. Also,
   \begin{equation}\label{eq15matr_rep_za_x_zvezda}
x^*=\left[
      \begin{array}{ccc}
        x_{11}^* & \cdots & x_{m1}^* \\
        \vdots & \ddots & \vdots \\
        x_{1n}^* & \cdots & x_{mn}^*
      \end{array}
      \right]_{f^*\times e^*},
\end{equation}
where the above representation is with respect to decompositions of the identity $1=f_1^*+\cdots +f_n^*$ and $1=e_1^*+\cdots +e_m^*$.

Let $e,f\in R$ be two idempotents. They induce two decompositions of the identity $1=e+(1-e)$ and $1=f+(1-f)$. Then we will write $x\in R$ in the following way
$$x=\bmatrix x_1 & x_2 \\ x_3 & x_4 \endbmatrix_{e\times f}.$$

\section{1MP-inverse and $<^{-\dag}$ partial order in $\mathbb{C}^{m\times n}$}

Recall that for $A\in \mathbb{C}^{m\times n}$ a matrix $A_l^-\in \mathbb{C}^{n\times m}$ is a least squares $g$-inverse of $A$ if $A_l^-b$ is a least squares solution of $Ax=b$ for all $b\in \mathbb{C}^m$, that is, if for all $b\in \mathbb{C}^m$ the $\ell_2$-norm of $Ax-b$ is smallest when $x=A_l^-b$. A matrix $A_m^-\in \mathbb{C}^{n\times m}$ is a minimum norm $g$-inverse of $A$ if $A^-_m b$ provides a solution with minimum norm of equation $Ax=b$ whenever it is consistent.

The following characterizations of 1MP and MP1 inverses are given in \cite{1MP}.

\begin{theorem}\label{th151} (\cite[Theorem 3.1]{1MP})
For $A\in \mathbb{C}^{m\times n}$ the following hold
\begin{align*}
A\{-\dag\}&=A\{1,2,3\}=\{X\in\mathbb{C}^{n\times m}: AXA=A, XAX=X, (AX)^*=AX\} \\
A\{\dag -\}&=A\{1,2,4\}=\{X\in\mathbb{C}^{n\times m}: AXA=A, XAX=X, (XA)^*=XA\}.
\end{align*}
\end{theorem}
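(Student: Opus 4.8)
The plan is to prove only the first displayed identity, $A\{-\dag\}=A\{1,2,3\}$; the second, $A\{\dag -\}=A\{1,2,4\}$, is its mirror image and will follow either by the same argument with left projectors replaced by right projectors, or formally by applying the first identity to $A^*$ together with $(A^*)^\dag=(A^\dag)^*$. Recall that by definition $A\{-\dag\}=\{A^-AA^\dag:A^-\in A\{1\}\}$, so the real content is to show that an element has the form $A^-AA^\dag$ for some $g$-inverse $A^-$ precisely when it is a reflexive least-squares $g$-inverse. I would split this into the two inclusions.

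For the inclusion $A\{-\dag\}\subseteq A\{1,2,3\}$, I would fix $A^-\in A\{1\}$, set $X=A^-AA^\dag$, and verify the three defining equations by direct substitution, using only $AA^-A=A$ and the Penrose equations for $A^\dag$. The computation is routine: equation $(3)$ is immediate because $AX=AA^-AA^\dag=AA^\dag$ is Hermitian; equation $(1)$ follows from $AXA=AA^\dag A=A$; and equation $(2)$ follows after collapsing the inner factor $A^\dag AA^-A=A^\dag A$, which reduces $XAX$ to $A^-AA^\dag=X$.

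The harder inclusion is $A\{1,2,3\}\subseteq A\{-\dag\}$, and its crux is the auxiliary identity $AX=AA^\dag$ valid for every $X\in A\{1,3\}$. I would establish it by a projector argument: equation $(3)$ makes $AX$ Hermitian, while equation $(1)$ makes it idempotent since $(AX)^2=(AXA)X=AX$, so $AX$ is the orthogonal projector onto its column space; because $AXA=A$ forces $\R A\subseteq\R(AX)\subseteq\R A$, that column space is exactly $\R A$, and since an orthogonal projector is determined by its range we obtain $AX=AA^\dag$. This identification is the main obstacle, as everything else is formal. Granting it, for $X\in A\{1,2,3\}$ I would take $A^-=X$ (legitimate since $X$ already satisfies $(1)$) and compute $A^-AA^\dag=XAA^\dag=X(AA^\dag)=X(AX)=XAX=X$, the last step using reflexivity $(2)$. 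Hence $X=XAA^\dag\in A\{-\dag\}$, which completes the inclusion.

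Finally, I would dispatch the dual identity $A\{\dag -\}=A\{1,2,4\}$ by the symmetric computation, in which the left projector identity $AX=AA^\dag$ is replaced by the right projector identity $XA=A^\dag A$ for $X\in A\{1,4\}$; the latter is proved by the same Hermitian-plus-idempotent reasoning applied to $XA$, whose range is seen to be $\R A^*$. Granting it, the chain $A^\dag AX=(XA)X=XAX=X$ finishes the inclusion $A\{1,2,4\}\subseteq A\{\dag -\}$, and I expect no new difficulty there beyond transcribing the argument of the third paragraph.
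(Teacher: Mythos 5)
Your proof is correct, but in its crucial step it takes a genuinely different route from the paper's. Note first that the paper does not reprove this matrix theorem at all: it quotes it from \cite{1MP}, and its own argument is the ring-theoretic generalization (Theorem \ref{th153}), whose proof specializes to exactly this statement when $R=\mathbb{C}^{m\times n}$ (matrices of all sizes can be handled as in Section 6, or one treats $m=n$ and pads). Your easy inclusion $A\{-\dag\}\subseteq A\{1,2,3\}$ is essentially identical to the paper's computation. The divergence is in the hard inclusion $A\{1,2,3\}\subseteq A\{-\dag\}$: you obtain the key identity $AX=AA^\dag$ by a projector argument --- $AX$ is Hermitian and idempotent, its column space is squeezed to $\R A$, and an orthogonal projector is determined by its range --- and then conclude $X=XAX=XAA^\dag$. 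The paper instead proves $gaa^\dag=g$ by a purely equational chain using only the involution and the defining equations, namely $gaa^\dag=gagaa^\dag=g(ag)^*(aa^\dag)^*=g(aa^\dag ag)^*=g(ag)^*=gag=g$, with no mention of ranges or projectors. Both are valid for complex matrices, but your identification of Hermitian idempotents with orthogonal projectors onto their ranges is precisely the kind of linear-algebra technique that, as the paper stresses in its introduction, is unavailable in a general ring with involution; the paper's chain is what lets the theorem survive in that setting (and for Hilbert space operators with closed range). So your route buys geometric transparency in the matrix case, while the paper's buys the generality that is the point of the article. Your dispatch of the dual identity via $X\mapsto X^*$ and $(A^*)^\dag=(A^\dag)^*$ is fine and matches the paper's observation (\ref{eq157}) that $g\in a\{1,2,3\}$ if and only if $g^*\in a^*\{1,2,4\}$.
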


\begin{theorem}\label{th151MPrls}
Let $A\in \mathbb{C}^{m\times n}$. The set of all 1MP-inverses of $A$ is equal to the set of all reflexive least square $g$-inverses of $A$. The set of all MP1-inverses of $A$ is equal to the set of all reflexive minimum norm $g$-inverses of $A$. Moreover,
\begin{align}\label{eq15ch_rls_rmn}
\begin{split}
A\{-\dag\}&=\{(A^*A)^-A^*:(A^*A)^-\in (A^*A)\{1\}\} \; \text{ and } \\
A\{\dag-\}&=\{A^*(AA^*)^-:(AA^*)^-\in (AA^*)\{1\}\}.
\end{split}
\end{align}
\end{theorem}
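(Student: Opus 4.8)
The plan is to split the statement into two parts: the identification of $A\{-\dag\}$ and $A\{\dag-\}$ with the reflexive least squares and reflexive minimum norm $g$-inverses, and the explicit representation formulas (\ref{eq15ch_rls_rmn}). For the first part I would recall the classical fact that $X$ is a least squares $g$-inverse of $A$ exactly when $A^*AX=A^*$, which in turn is equivalent to $X\in A\{1,3\}$, and dually that the minimum norm $g$-inverses are precisely the elements of $A\{1,4\}$. Since imposing the additional reflexivity condition $XAX=X$ turns these into $A\{1,2,3\}$ and $A\{1,2,4\}$ respectively, the two set-equalities follow immediately once we invoke Theorem \ref{th151}, which already identifies $A\{-\dag\}=A\{1,2,3\}$ and $A\{\dag-\}=A\{1,2,4\}$.

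The substance of the theorem is therefore the representation (\ref{eq15ch_rls_rmn}), which I would establish by a double inclusion, proving $A\{1,2,3\}=\{(A^*A)^-A^*:(A^*A)^-\in(A^*A)\{1\}\}$ (the formula for $A\{\dag-\}$ being entirely dual). For the inclusion $\supseteq$, fix an arbitrary $g$-inverse $G:=(A^*A)^-$ and put $X=GA^*$. The two identities I would lean on are
\[
AGA^*A=A \qquad \text{and} \qquad A^*AGA^*=A^*.
\]
The first is the standard consequence of $AGA^*A-A$ having vanishing Gram matrix; the second follows from the first applied to the $g$-inverse $G^*$ (which is again a $g$-inverse of $A^*A$, since $(A^*A)^*=A^*A$) upon taking adjoints. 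Granting these, condition $(1)$ is the first identity, condition $(2)$ reads $XAX=G(A^*AGA^*)=GA^*=X$ by the second identity, and for condition $(3)$ one checks that $E:=AX=AGA^*$ is idempotent and that $E$ and $E^*$ share the range $\R A$; a short argument then gives $E^*E=E$ and $EE^*=E^*$, whence $E=E^*$, so $AX$ is Hermitian and $X\in A\{1,2,3\}$.

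For the reverse inclusion $\subseteq$, take any $X\in A\{1,2,3\}$ and set $G:=XX^*$. Using $(AX)^*=AX$ and $XAX=X$ one computes $GA^*=X(AX)^*=XAX=X$, and using in addition the idempotency and Hermitian symmetry of $AX$ one verifies $(A^*A)G(A^*A)=A^*(AX)(AX)^*A=A^*(AX)A=A^*A$, so that $G\in(A^*A)\{1\}$ realizes $X$ in the required form. This closes the double inclusion and, combined with Theorem \ref{th151}, yields (\ref{eq15ch_rls_rmn}); the minimum norm statement and the formula for $A\{\dag-\}$ follow by replacing $A$ with $A^*$ and taking adjoints.

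The step I expect to be the main obstacle is the forward inclusion, specifically verifying condition $(3)$: one must show $AGA^*$ is Hermitian for \emph{every} $g$-inverse $G$ of $A^*A$, not merely for the Moore-Penrose inverse. The point is that although $G$ itself is far from unique, the product $AGA^*$ is forced to be the orthogonal projector onto $\R A$, and the cleanest route to this is to deduce the two displayed identities first and then argue abstractly that an idempotent whose range and co-range both equal $\R A$ must be self-adjoint, rather than attempting a direct computation with the entries of $G$.
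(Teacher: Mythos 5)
Your proof is correct, but it takes a more self-contained route than the paper, whose own proof of this theorem is essentially a chain of citations: least squares (resp.\ minimum norm) $g$-inverses are identified with $\{1,3\}$- (resp.\ $\{1,4\}$-) inverses by \cite[Theorems 2.5.14 and 2.5.5]{knjiga}, the first assertion then follows from Theorem \ref{th151} exactly as in your first paragraph, and both formulas in (\ref{eq15ch_rls_rmn}) are simply quoted from \cite[Theorems 2.5.19 and 2.5.9]{knjiga}. What you add is a direct double-inclusion proof of $A\{1,2,3\}=\{(A^*A)^-A^*:(A^*A)^-\in(A^*A)\{1\}\}$. Your inclusion $\subseteq$ (set $G=XX^*$, check $GA^*=X$ and $G\in(A^*A)\{1\}$) is precisely the argument the paper uses later, in the ring setting, to prove equality (\ref{eq155}) of Theorem \ref{th154}. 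Your inclusion $\supseteq$ is genuinely different from that ring-case computation: it hinges on the identity $AGA^*A=A$, deduced from the vanishing of the Gram matrix of $M=AGA^*A-A$, plus an abstract projector argument. That is valid for complex matrices (and for Hilbert-space operators), but the implication $M^*M=0\Rightarrow M=0$ is unavailable in a general ring with involution, which is exactly why the paper's ring proof of (\ref{eq155}) instead computes with a fixed $\{1,2,3\}$-inverse $h$ of $a$; so your argument buys a self-contained proof of the matrix statement at hand, while the paper's technique (citation here, $h$-based computation in Section 3) buys generality. One cosmetic remark: your claimed identities $E^*E=E$ and $EE^*=E^*$ (from the range argument) and the ones obtained by direct computation, $E^*E=E^*$ and $EE^*=E$, are all correct---indeed all four hold since $E=E^*$---and either pair suffices to conclude that $AX$ is Hermitian, so there is no gap there.
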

\begin{proof}
By \cite[Theorem 2.5.14]{knjiga}, $X$ is a least squares $g$-inverse of $A$ if and only if $AXA=A$ and $(AX)^*=AX$. Similarly, $X$ is a minimum norm $g$-inverse of $A$ if and only if $AXA=A$ and $(XA)^*=XA$, \cite[Theorem 2.5.5]{knjiga}.
Now the first part of the theorem follows by Theorem \ref{th151} while characterizations in (\ref{eq15ch_rls_rmn}) follow by \cite[Theorem 2.5.19 and Theorem 2.5.9]{knjiga}.
\end{proof}

We already know by Theorem \ref{th151} that $A\{-\dag\}=A\{1,2,3\}$. We give some more characterizations of 1MP and MP1-inverses.

\begin{theorem}\label{th1516}
For $A\in \mathbb{C}^{m\times n}$ the following conditions are equivalent:
\begin{enumerate}[(i)]
\item $X$ is a 1MP-inverse of $A$.
\item $X\in A\{1,2\}$ and $\N X=\N A^*$.
\item $X\in A\{1\}$ and $\N A^*\subseteq \N X$.
\item $X\in A\{2\}$ and $\N X\subseteq \N A^*$.
\end{enumerate}
\end{theorem}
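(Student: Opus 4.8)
The plan is to use Theorem~\ref{th151}, which identifies condition (i) with $X\in A\{1,2,3\}$, i.e.\ with the three equations $AXA=A$, $XAX=X$ and $(AX)^*=AX$, and to translate the null-space hypotheses into range conditions via the orthogonal-complement identities $\N A^*=(\R A)^\perp$ and $(\N X)^\perp=\R X^*$. In these terms $\N A^*\subseteq\N X$ is the same as $\R X^*\subseteq\R A$, while $\N X\subseteq\N A^*$ is the same as $\R A\subseteq\R X^*$. I would then establish the cycle (i)$\Rightarrow$(ii)$\Rightarrow$(iii)$\Rightarrow$(i) together with the two implications (i)$\Rightarrow$(iv)$\Rightarrow$(i). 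The steps (ii)$\Rightarrow$(iii) and (i)$\Rightarrow$(iv) are immediate from the definitions once (i)$\Rightarrow$(ii) is known, so the real content lies in (i)$\Rightarrow$(ii), (iii)$\Rightarrow$(i) and (iv)$\Rightarrow$(i).

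For (i)$\Rightarrow$(ii) I would argue that, under (i), $AX$ is idempotent (from $AXA=A$), self-adjoint (condition (3)) and has range $\R A$ (again from $AXA=A$); hence $AX$ is the orthogonal projector onto $\R A$, so $\N(AX)=(\R A)^\perp=\N A^*$. Reflexivity then gives $\N X=\N(AX)$: indeed $Xv=0$ forces $AXv=0$, and conversely $AXv=0$ forces $Xv=XAXv=0$. Combining, $\N X=\N A^*$, which is (ii). For (iii)$\Rightarrow$(i) the hypothesis $\N A^*\subseteq\N X$ says precisely that $X$ annihilates $\N A^*=(\R A)^\perp$. As before $AX$ is idempotent with range $\R A$, so it acts as the identity on $\R A$; and for $v\in(\R A)^\perp$ we have $AXv=0$ because $Xv=0$. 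Thus $AX$ agrees with the orthogonal projector onto $\R A$ on the complementary summands $\R A$ and $(\R A)^\perp$, forcing $AX$ to be that projector and giving $(AX)^*=AX$. Writing $I-AX$ for the orthogonal projector onto $(\R A)^\perp=\N A^*$, every vector $(I-AX)u$ lies in $\N A^*\subseteq\N X$, so $X(I-AX)=0$, i.e.\ $XAX=X$; together with the assumed $AXA=A$ this yields $X\in A\{1,2,3\}$.

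The subtlest step is (iv)$\Rightarrow$(i), where a mere $\{2\}$-inverse must be promoted to a $\{1,2,3\}$-inverse. First I would prove $AXA=A$: for $v\in\N(XA)$ the vector $w=Av$ lies in $\R A\subseteq\R X^*=(\N X)^\perp$, while $Xw=XAv=0$ puts $w\in\N X$; hence $w\perp w$ and $w=Av=0$, so $A$ annihilates $\N(XA)=\R(I-XA)$ and $A(I-XA)=0$, that is $AXA=A$. With $AXA=A$ and the given $XAX=X$, $X$ is reflexive, so $AX$ is idempotent with range $\R A$ and $\N(AX)=\N X$. Finally $\N(AX)=\N X\subseteq\N A^*=(\R A)^\perp$, and since a projector onto $\R A$ whose kernel is contained in $(\R A)^\perp$ must (by a dimension count) have kernel equal to $(\R A)^\perp$, $AX$ is again the orthogonal projector onto $\R A$ and $(AX)^*=AX$. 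This gives (i); the reverse arrow (i)$\Rightarrow$(iv) is immediate since (i)$\Rightarrow$(ii) already yields $\N X=\N A^*$.

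I expect the extraction of $AXA=A$ in (iv)$\Rightarrow$(i) to be the main obstacle, as it is the one place where the one-sided kernel inclusion and the $\{2\}$-inverse identity must be combined, the decisive point being the orthogonality $\R X^*\perp\N X$ that collapses $w=Av$ to zero. Everything else reduces to the single recurring observation that an idempotent with range $\R A$ is the \emph{orthogonal} projector onto $\R A$ exactly when its kernel sits inside $(\R A)^\perp$, which is what the self-adjointness condition (3) encodes.
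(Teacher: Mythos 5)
Your proof is correct, and its logical skeleton (the cycle (i)$\Rightarrow$(ii)$\Rightarrow$(iii)$\Rightarrow$(i) plus (i)$\Rightarrow$(iv)$\Rightarrow$(i)) closes all the equivalences, but the route is genuinely different from the paper's. The paper argues purely algebraically: for (i)$\Rightarrow$(ii) it writes down the identities $A^*=(AXA)^*=A^*AX$ and $X=XAX=X(AX)^*=XX^*A^*$, which give the two kernel inclusions at once; for (iii)$\Rightarrow$(i) and (iv)$\Rightarrow$(i) it invokes a factorization lemma (Lemma 2.1 of \cite{prvi_rad}): $\N A^*\subseteq \N X$ yields $X=X(A^*)^\dag A^*=XAA^\dag$, whence $AX=AA^\dag$ is self-adjoint and $XAX=XAA^\dag=X$; dually, $\N X\subseteq \N A^*$ yields $A=X^\dag XA$, whence $AX=X^\dag X$ is self-adjoint and $AXA=A$. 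You instead work geometrically: you identify $AX$ with the orthogonal projector onto $\R A$, decompose $\mathbb{C}^m=\R A\oplus(\R A)^\perp$, and in the hardest step (iv)$\Rightarrow$(i) you recover $AXA=A$ by intersecting $\R X^*=(\N X)^\perp$ with $\N X$ to kill $w=Av$ --- a clean substitute for the paper's appeal to $X^\dag$. What each approach buys: yours is elementary and self-contained for matrices, needing no external lemma; but the final dimension count in (iv)$\Rightarrow$(i) and the orthocomplement formalism are specifically finite-dimensional Hilbert-space tools, whereas the paper's identities transfer verbatim to a ring with involution, which is precisely what it exploits later (compare Theorems \ref{th153} and \ref{th154}). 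Incidentally, your dimension count can be made dimension-free: if an idempotent has range $\R A$ and kernel $N\subseteq(\R A)^\perp$, then any $v\in(\R A)^\perp$ splits as $v=r+n$ with $r\in\R A$, $n\in N$, and $r=v-n\in\R A\cap(\R A)^\perp=\{0\}$ forces $v\in N$, so $N=(\R A)^\perp$.
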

\begin{proof}
$(i)\Rightarrow (ii)$: If $X$ is a 1MP-inverse of $A$ then $AXA=A$, $XAX=X$ and $(AX)^*=AX$. From $A^*=(AXA)^*=A^*AX$ and $X=XAX=X(AX)^*=XX^*A^*$, we conclude that $\N X=\N A^*$.

$(ii)\Rightarrow (iii)$ and $(ii)\Rightarrow (iv)$ are trivial.

$(iii)\Rightarrow (i)$: Suppose that $AXA=A$ and $\N A^*\subseteq \N X$. From \cite[Lemma 2.1]{prvi_rad} we know that $\N A^*\subseteq \N X$ implies $X=X(A^*)^\dag A^*=X(AA^\dag)^*=XAA^\dag$. Therefore, $AX=AA^\dag$ which is self-adjoint. Also, $XAX=XAA^\dag=X$, so $X\in A\{1,2,3\}$.

$(iv)\Rightarrow (i)$: Suppose that $XAX=X$ and $\N X\subseteq \N A^*$. Again, by \cite[Lemma 2.1]{prvi_rad}, we have $A^*=A^*X^\dag X$, that is, $A=X^\dag XA$. It follows that $AX=X^\dag X$ and $AXA=X^\dag X A=A$, so $X\in A\{1,2,3\}$.
\end{proof}

We can prove likewise the dual result for the MP1-inverse.

\begin{theorem}
For $A\in \mathbb{C}^{m\times n}$ the following conditions are equivalent:
\begin{enumerate}[(i)]
\item $X$ is a MP1-inverse of $A$.
\item $X\in A\{1,2\}$ and $\R X=\R A^*$.
\item $X\in A\{1\}$ and $\R X\subseteq \R A^*$.
\item $X\in A\{2\}$ and $\R A^*\subseteq \R X$.
\end{enumerate}
\end{theorem}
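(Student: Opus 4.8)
The plan is to mirror the proof of Theorem \ref{th1516} exactly, replacing every statement about null spaces by the dual statement about ranges (column spaces), since the MP1-inverse is the adjoint analogue of the 1MP-inverse. Conceptually, the key observation is that passing from condition (3) $(AX)^*=AX$ to condition (4) $(XA)^*=XA$ corresponds under transposition to interchanging the roles of $\R X$ and $\N X$. Concretely, I expect the proof to hinge on the adjoint form of the range/null-space lemma: whereas the 1MP proof used \cite[Lemma 2.1]{prvi_rad} in the form ``$\N A^*\subseteq \N X \Rightarrow X=XAA^\dag$,'' the MP1 proof will use its dual, namely ``$\R X\subseteq \R A^* \Rightarrow X=A^\dag A X$.''

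For the implication $(i)\Rightarrow(ii)$: if $X$ is an MP1-inverse then $AXA=A$, $XAX=X$, and $(XA)^*=XA$. From $A^*=(AXA)^*=XAA^*$ one gets $\R A^*\subseteq \R X$, and from $X=XAX=(XA)^*X=A^*X^*X$ one gets $\R X\subseteq \R A^*$; together these give $\R X=\R A^*$. The implications $(ii)\Rightarrow(iii)$ and $(ii)\Rightarrow(iv)$ are again trivial since $\R X=\R A^*$ forces both inclusions while $A\{1,2\}\subseteq A\{1\}$ and $A\{1,2\}\subseteq A\{2\}$.

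For $(iii)\Rightarrow(i)$: assume $AXA=A$ and $\R X\subseteq \R A^*$. By the dual of \cite[Lemma 2.1]{prvi_rad} we get $X=A^\dag A X$, hence $XA=A^\dag A A X$ wait—more directly, $XA=A^\dag A\,(XA)$, which is self-adjoint because $A^\dag A$ is a self-adjoint idempotent and $XA$ already lies in $\R(A^\dag A)$; equivalently $XA=A^\dag A$, which is self-adjoint. Then $XAX=A^\dag AX=X$, so $X\in A\{1,2,4\}$. For $(iv)\Rightarrow(i)$: assume $XAX=X$ and $\R A^*\subseteq \R X$; the dual lemma gives $A=AXX^\dag$, whence $XA=XAXX^\dag$ leads to $XA=(X X^\dag)$-type self-adjoint expression, and one checks $AXA=A$, placing $X$ in $A\{1,2,4\}$.

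The main obstacle is purely bookkeeping: one must apply \cite[Lemma 2.1]{prvi_rad} in its \emph{range} form rather than its \emph{null-space} form, and verify that the factor one multiplies by (here $A^\dag A$ on the left, rather than $AA^\dag$ on the right) is the correct self-adjoint idempotent making $(XA)^*=XA$ rather than $(AX)^*=AX$. Since the statement and proof are formally dual to Theorem \ref{th1516} under the involution, no genuinely new idea is required, which is presumably why the authors write ``we can prove likewise.''
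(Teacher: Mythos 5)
Your proposal is correct and is exactly what the paper intends: the paper offers no separate proof of this theorem, stating only that it follows ``likewise'' by dualizing Theorem \ref{th1516}, and your argument is precisely that dualization (the dual lemma giving $X=A^\dag AX$ from $\R X\subseteq \R A^*$, resp.\ $A=AXX^\dag$ from $\R A^*\subseteq \R X$, in place of the null-space form used for the 1MP case). One local fix: in $(iii)\Rightarrow(i)$ the self-adjointness of $XA$ does not follow from ``$XA$ lies in $\R(A^\dag A)$'' (membership in the range of a self-adjoint projector does not make an element self-adjoint); it follows from the direct computation $XA=(A^\dag AX)A=A^\dag(AXA)=A^\dag A$, which uses the hypothesis $AXA=A$, after which the rest of your argument ($XAX=A^\dag AX=X$, and dually $XA=XX^\dag$, $AXA=A$) goes through verbatim.
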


We now turn to order relations defined by 1MP and MP1 inverses. Next theorem shows that the relations $<^{-\dag}$ and $<^{\dag -}$ defined in (\ref{eq15<-+}) and (\ref{eq15<+-}) are respectively the left-star and right-star order relations defined in (\ref{eq15*<}).
\begin{theorem}\label{th152}
Let $A,B\in \mathbb{C}^{m\times n}$. Then $A<^{-\dag} B$ if and only if $A\*< B$. Also, $A<^{\dag -}B$ if and only if $A\<*B$.
\end{theorem}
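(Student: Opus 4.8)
The plan is to prove the first equivalence $A<^{-\dag}B\Leftrightarrow A\*< B$ by direct computation and then deduce the second one, $A<^{\dag -}B\Leftrightarrow A\<* B$, by passing to adjoints rather than repeating the argument. Throughout I would use two facts that are already available: by Theorem \ref{th151} we have $A\{-\dag\}=A\{1,2,3\}$, and by Theorem \ref{th1516} every 1MP-inverse $X$ of $A$ satisfies $\N X=\N A^*$. Moreover, since such an $X$ lies in $A\{1,3\}$, the product $AX$ is a self-adjoint idempotent with $\R(AX)=\R A$, so $AX$ is the orthogonal projector $AA^\dag$ onto $\R A$, independently of the chosen $X$. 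These two observations, $\N X=\N A^*$ and $AX=AA^\dag$, are the workhorses of the whole proof.

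For the direction $A<^{-\dag}B\Rightarrow A\*< B$, I would fix a 1MP-inverse $X$ with $AX=BX$ and $XA=XB$. Left-multiplying $XA=XB$ by $A$ and using $AXA=A$ gives $A=AXB$. Since $AX$ is self-adjoint, $A^*AX=A^*(AX)^*=(AXA)^*=A^*$, whence $A^*A=A^*(AXB)=(A^*AX)B=A^*B$, which is the first defining condition of $\*<$. For the range inclusion I would use $AX=AA^\dag$: the hypothesis $AX=BX$ yields $AA^\dag=BX$, so $\R A=\R(AA^\dag)=\R(BX)\subseteq\R B$. This establishes $A\*< B$.

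The converse $A\*< B\Rightarrow A<^{-\dag}B$ is the heart of the matter. First I would note that the condition $XA=XB$ comes for free: for any 1MP-inverse $X$ we have $\N X=\N A^*$, and $A^*A=A^*B$ says precisely that $\R(A-B)\subseteq\N A^*=\N X$, so $X(A-B)=0$. It then remains to produce one $X\in A\{1,2,3\}$ with $AX=BX$, i.e. with $(B-A)X=0$. Here I would pass to the orthogonal decompositions $\mathbb{C}^n=\R A^*\oplus\N A$ and $\mathbb{C}^m=\R A\oplus\N A^*$, in which $A=\mathrm{diag}(A_1,0)$ with $A_1$ invertible. Writing $B$ in the same bases, the identity $A^*A=A^*B$ forces $B_1=A_1$ and $B_2=0$ for the top blocks of $B$, while $\R A\subseteq\R B$ translates into $\R B_3\subseteq\R B_4$ for the bottom blocks. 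On the other hand every $X\in A\{1,2,3\}$ has the form $X=\left(\begin{smallmatrix}A_1^{-1}&0\\ X_3&0\end{smallmatrix}\right)$ with $X_3$ free, and a block multiplication shows that $AX=BX$ is equivalent to $B_4X_3=-B_3A_1^{-1}$. The main obstacle is exactly the solvability of this last equation; it holds because $\R(B_3A_1^{-1})=\R B_3\subseteq\R B_4$, and taking $X_3=-B_4^\dag B_3A_1^{-1}$ produces the required 1MP-inverse, completing the converse.

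Finally, I would obtain the statement for $<^{\dag -}$ and $\<*$ by duality instead of a separate computation. Taking adjoints sends a $\{1,2,4\}$-inverse of $A$ to a $\{1,2,3\}$-inverse of $A^*$, so $Y\in A\{\dag -\}$ iff $Y^*\in A^*\{-\dag\}$; transposing the two defining equations of $<^{\dag -}$ then gives $A<^{\dag -}B$ iff $A^*<^{-\dag}B^*$. Likewise, comparing (\ref{eq15*<}) with the definition of $\<*$ and taking adjoints yields $A\<* B$ iff $A^*\*< B^*$. Applying the first equivalence, already proved, to the pair $A^*,B^*$ then gives $A<^{\dag -}B\Leftrightarrow A^*<^{-\dag}B^*\Leftrightarrow A^*\*< B^*\Leftrightarrow A\<* B$, as required.
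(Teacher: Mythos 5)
Your proof is correct, but it takes a genuinely different route from the paper. The paper disposes of this theorem in two lines: it quotes Theorems 2.3 and 2.4 of Baksalary--Mitra (which characterize $\*<$ and $\<*$ via commuting-type equations with reflexive least squares, respectively minimum norm, $g$-inverses) and combines them with Theorem \ref{th151MPrls}, which identifies $A\{-\dag\}$ and $A\{\dag-\}$ with exactly those two classes of inverses. You instead re-prove the substance of the Baksalary--Mitra result from scratch: the forward direction by the identities $A^*AX=A^*$ and $AX=AA^\dag$ (both correct, since $AX$ is the orthogonal projector onto $\R A$ for any $X\in A\{1,3\}$), and the converse by block decomposition with respect to $\mathbb{C}^n=\R A^*\oplus \N A$ and $\mathbb{C}^m=\R A\oplus \N A^*$, reducing the problem to the solvability of $B_4X_3=-B_3A_1^{-1}$ and solving it with $X_3=-B_4^\dag B_3A_1^{-1}$; the final duality step $A<^{\dag-}B\Leftrightarrow A^*<^{-\dag}B^*$ and $A\<* B\Leftrightarrow A^*\*< B^*$ is also sound (note $AA^*=AB^*$ iff $AA^*=BA^*$ by taking adjoints of the self-adjoint matrix $AA^*$). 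What is notable is that your converse argument is essentially the technique the paper saves for the ring setting: the proof of Theorem \ref{th155} performs the same block analysis with respect to the idempotents $p=ah$, $q=ha$, except that there, lacking Moore--Penrose inverses of blocks, the authors must extract the solution $c_3$ of the corresponding equation from the factorization $a=bc$ rather than write it as $-B_4^\dag B_3A_1^{-1}$. So the paper's proof buys brevity and a clean logical dependence on the literature, while yours buys self-containedness and, in effect, anticipates the ring-theoretic argument --- at the cost of using the existence of $B_4^\dag$, which is free for complex matrices but unavailable in the general ring case.
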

\begin{proof}
In \cite[Theorem 2.3 and Theorem 2.4]{BaksalaryMitra} it is proved that $A\*<B$ if and only if $AA_{lr}=BA_{lr}$ and $A_{lr}A=A_{lr}B$ for some reflexive least square $g$-inverse of $A$. In the same theorems, it is proved that $A\<*B$ if and only if $AA_{mr}=BA_{mr}$ and $A_{mr}A=A_{mr}B$ for some reflexive minimum norm $g$-inverse of $A$. Now the proof is a direct consequence of Theorem \ref{th151MPrls}.
\end{proof}

\section{1MP-inverse in ring}

Let $a\in R$ be regular with $g$-inverse $a^-$. Let $p=aa^-$ and $q=a^-a$. Then $p$ and $q$ are idempotents.
Since $a=paq$, it follows that the matrix representations of $a$ with respect to $p$ and $q$ is
\begin{equation}\label{eq159}
a=\bmatrix a & 0 \\ 0 & 0 \endbmatrix_{p\times q},
\end{equation}
where $a$ is $(p,q)$-invertible with $a^-_{p,q}=a^-aa^-$. Thus, if $a^-$ is a reflexive g-inverse of $a$ then $a^-_{p,q}=a^-$.
Let $x=\bmatrix x_1 & x_2 \\ x_3 & x_4 \endbmatrix_{q\times p} \in a\{1\}$. We see that $axa=a$ if and only if $ax_1a=a$. Multiplying this equation by $a^-$ from the both sides, we get $qx_1p=a^-aa^-$, so $x_1=a^-aa^-$. On the other hand, if $x_1=a^-aa^-$ then $ax_1a=a$.
Therefore
\begin{equation}\label{eq15all inverses}
a\{1\}=\left\{\bmatrix a^-aa^- & x_2 \\ x_3 & x_4 \endbmatrix_{q\times p}: x_2\in qR(1-p), x_3\in (1-q)Rp, x_4\in (1-q)R(1-p)  \right\}.
\end{equation}
Similarly, we can prove that
\begin{equation}\label{eq15all reflexive inverses}
a\{1,2\}=\left\{\bmatrix a^-aa^- & x_2 \\ x_3 & x_3ax_2 \endbmatrix_{q\times p}: x_2\in qR(1-p), x_3\in (1-q)Rp  \right\}.
\end{equation}
If $a\in R^\dag$ and $p=aa^\dag$, $q=a^\dag a$ then $a^\dag =\bmatrix a^\dag & 0 \\ 0 & 0 \endbmatrix_{q\times p}$.
We can define a binary relation $\sim_l$ on $a\{1\}$ in the same way as it is done for matrices. For $a^-,a^=\in a\{1\}$,
$$a^-\sim_l a^= \;\;\Leftrightarrow\;\; a^-a=a^=a.$$
The relation $\sim_l$ is an equivalence relation on $a\{1\}$. If $a^-\in a\{1\}$ then by (\ref{eq15all inverses}),
$$a^-=\bmatrix a^\dag & x_2 \\ x_3 & x_4 \endbmatrix_{q\times p}$$
for some $x_2\in qR(1-p)$, $x_3\in (1-q)Rp$ and $x_4\in (1-q)R(1-p)$. We have
$$a^-a=\bmatrix q & 0 \\ x_3a & 0 \endbmatrix_{q\times q}.$$
Similarly, for $a^= =\bmatrix a^\dag & y_2 \\ y_3 & y_4 \endbmatrix_{q\times p}\in a\{1\}$ we have $a^=a=\bmatrix q & 0 \\ y_3a & 0 \endbmatrix_{q\times q}$ for some $y_3\in (1-q)Rp$. It follows that $a^-\sim_l a^=$ if and only if $x_3a=y_3a$. Multiplying this by $a^\dag$ from the right, we obtain $x_3p=y_3p$, ie. $x_3=y_3$. Therefore, $a^-\sim_l a^=$ if and only if $x_3=y_3$. It follows that we can choose the element
$$\bmatrix a^\dag & 0 \\ x_3 & 0 \endbmatrix_{q\times p}$$
as  the most natural representative of the class $[a^-]_{\sim_l}$. This representative is equal to $a^-aa^\dag$ because
\begin{equation}\label{eq154}
a^-aa^\dag=\bmatrix a^\dag & x_2 \\ x_3 & x_4 \endbmatrix_{q\times p}\bmatrix a & 0 \\ 0 & 0 \endbmatrix_{p\times q}\bmatrix a^\dag & 0 \\ 0 & 0 \endbmatrix_{q\times p}=\bmatrix a^\dag aa^\dag & 0 \\ x_3aa^\dag & 0 \endbmatrix_{q\times p}=\bmatrix a^\dag & 0 \\ x_3 & 0 \endbmatrix_{q\times p}
\end{equation}
We can now extend the definition of 1MP-inverse from the matrix case to the ring case.

\begin{definition}
Let $a\in R^\dag$ and choose $a^-\in a\{1\}$. The element
$$a^{-\dag}=a^-aa^\dag$$
is called a 1MP-inverse of $a$. The set of all 1MP-inverses of $a$ is denoted by $a\{-\dag\}$.
\end{definition}
Thus, we have
$$a\{-\dag\}=\{a^-aa^\dag : a^-\in a\{1\}\}.$$

Theorem \ref{th151} is also valid in an arbitrary ring with involution.

\begin{theorem}\label{th153}(See \cite[Theorem 3.1]{1MP} for the matrix case.)
For $a\in R^\dag$ and $g\in R$ the following statements are equivalent:
\begin{enumerate}[(i)]
\item $g\in a\{-\dag\}$;
\item $g$ is the solution of the system of equation
$$xax=x, \quad ax=aa^\dag;$$
\item $g\in a\{1,2,3\}$.
\end{enumerate}
\end{theorem}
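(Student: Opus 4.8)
The plan is to establish a cycle of implications $(i)\Rightarrow(ii)\Rightarrow(iii)\Rightarrow(i)$, reducing each to purely algebraic manipulations with the defining relations of the Moore--Penrose inverse and the characterizing equations of the $\{1,2,3\}$-inverse. The key observation throughout is that the three Penrose-type equations $xax=x$, $ax=aa^\dag$ (condition (ii)) and $axa=a,\ xax=x,\ (ax)^*=ax$ (condition (iii)) are all expressible via $aa^\dag$ and $a^\dag a$, so the self-adjointness requirement $(ax)^*=ax$ will be the pivot connecting the idempotent $ax$ to the distinguished self-adjoint idempotent $aa^\dag$.

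For $(i)\Rightarrow(ii)$, I would take $g=a^{-\dag}=a^-aa^\dag$ for some $a^-\in a\{1\}$ and compute directly. First, $ag=aa^-aa^\dag=aa^\dag$ using $aa^-a=a$; this immediately gives the second equation of (ii). For the first equation, $gag=gaa^\dag=a^-aa^\dag\cdot aa^\dag$, and since $ag=aa^\dag$ we get $gag=g(ag)=g\,aa^\dag=a^-aa^\dag aa^\dag=a^-aa^\dag=g$, so $g$ is reflexive as well. Here I would use that $a^\dag aa^\dag=a^\dag$ to collapse the product.

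For $(ii)\Rightarrow(iii)$, suppose $gag=g$ and $ag=aa^\dag$. The relation $ag=aa^\dag$ is self-adjoint because $aa^\dag$ is a self-adjoint idempotent by the Penrose equation $(3)$, so equation $(3)$ for $g$ holds. It remains to verify $(1)$, namely $aga=a$: multiply $ag=aa^\dag$ on the right by $a$ to obtain $aga=aa^\dag a=a$, using Penrose equation $(1)$ for $a^\dag$. Reflexivity $(2)$ is assumed, so $g\in a\{1,2,3\}$.

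For $(iii)\Rightarrow(i)$, the main obstacle lies: starting from $g\in a\{1,2,3\}$ I must exhibit $g$ as $a^-aa^\dag$ for some genuine $g$-inverse $a^-$. The natural candidate is $a^-=g$ itself, so the real content is to prove $g=gaa^\dag$. The strategy is to show $ag=aa^\dag$ first and then right-multiply by $g$. From $(ax)^*=ax$ and $axa=a$ one deduces $ag$ is a self-adjoint idempotent with $ag\cdot a=a$; since $aa^\dag$ is the unique such idempotent associated with the range of $a$ (here I would invoke uniqueness of the Moore--Penrose inverse, specifically that $ag=aa^\dag$ follows from $ag$ being self-adjoint, idempotent, and satisfying $aga=a$ together with $ag\in aR$). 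Concretely, $ag=(ag)^*=(a g a g)^*\cdots$ can be manipulated, but the cleanest route is: $aa^\dag=aa^\dag(ag)$ since $aa^\dag a=a$ gives $aa^\dag ag=ag$, and symmetrically $ag=ag\,aa^\dag$ using self-adjointness of both idempotents, whence $ag=aa^\dag$. Then $gaa^\dag=g(ag)=gag=g$ by reflexivity, proving $g=gaa^\dag=g\cdot aa^\dag\in a\{-\dag\}$ with $a^-=g\in a\{1\}$. I expect the delicate point to be arguing rigorously that two self-adjoint idempotents with the same "left annihilator behaviour" coincide without the linear-algebra crutch of ranges, relying instead on the involution and the Penrose relations as indicated.
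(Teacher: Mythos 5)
Your overall architecture --- the cycle $(i)\Rightarrow(ii)\Rightarrow(iii)\Rightarrow(i)$ --- is the same as the paper's, and your arguments for $(i)\Rightarrow(ii)$ and $(ii)\Rightarrow(iii)$ are correct and essentially identical to the paper's (the paper collapses $gag$ using $aa^-a=a$ where you use $a^\dag aa^\dag=a^\dag$; both work). The genuine gap is in $(iii)\Rightarrow(i)$, in the chain you call the ``cleanest route.'' The two identities you actually justify there are $aa^\dag(ag)=ag$ (from $aa^\dag a=a$) and its adjoint $(ag)(aa^\dag)=ag$ (from self-adjointness of $ag$ and $aa^\dag$). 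These say only that the self-adjoint idempotent $ag$ is absorbed by $aa^\dag$ on both sides, and that does \emph{not} imply $ag=aa^\dag$: your chain never uses the hypothesis $aga=a$, yet no proof of $ag=aa^\dag$ can avoid it, because the facts you do use ($gag=g$, $(ag)^*=ag$, and the Penrose equations for $a$) are all satisfied by $g=0$, for which $ag=0\neq aa^\dag$ whenever $a\neq 0$. (Note also that your displayed claim ``$aa^\dag=aa^\dag(ag)$'' is not what the cited justification proves; $aa^\dag a=a$ gives $aa^\dag(ag)=ag$, a different identity --- the claimed one is precisely the missing absorption.) The repair is one line: right-multiply $aga=a$ by $a^\dag$ to get $(ag)(aa^\dag)=aa^\dag$; combined with $(ag)(aa^\dag)=ag$, which you do have, this yields $ag=aa^\dag$, and then $g=gag=g(ag)=gaa^\dag$ exhibits $g=a^-aa^\dag$ with $a^-=g\in a\{1\}$.

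For comparison, the paper proves $(iii)\Rightarrow(i)$ without ever establishing $ag=aa^\dag$: it shows directly that
$$gaa^\dag=gag\,aa^\dag=g(ag)^*(aa^\dag)^*=g(aa^\dag ag)^*=g(ag)^*=gag=g,$$
using only $gag=g$, self-adjointness of $ag$ and $aa^\dag$, and $aa^\dag a=a$, and invokes $aga=a$ only at the end to certify that $a^-:=g$ lies in $a\{1\}$. Your route --- proving $ag=aa^\dag$ first, in effect establishing $(iii)\Rightarrow(ii)$ --- is equally natural, and your appeal to uniqueness of the self-adjoint idempotent $aa^\dag$ is a sound guiding principle; but that uniqueness is exactly the two-sided absorption argument, and one of its two halves (the one coming from $aga=a$) is what your write-up omits. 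With the fix above your proof becomes complete and correct.
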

\begin{proof}
(i) $\Rightarrow$ (ii): If $g\in a\{-\dag\}$ then $g=a^-aa^\dag$ for some $a^-\in a\{1\}$. We have
$$gag=a^-aa^\dag a a^-aa^\dag= a^-aa^- aa^\dag=a^-aa^\dag=g$$
and
$$ag=aa^-aa^\dag=aa^\dag.$$

(ii) $\Rightarrow$ (iii): If $gag=g$ and $ag=aa^\dag$ then $aga=a$ and $(ag)^*=(aa^\dag)^*=aa^\dag=ag$, so $g\in a\{1,2,3\}$.

(iii) $\Rightarrow$ (i): Suppose that $aga=a$, $gag=g$ and $(ag)^*=ag$. We obtain
$$gaa^\dag=gagaa^\dag=g(ag)^*(aa^\dag)^*=g(aa^\dag ag)^*=g(ag)^*=gag=g.$$
Therefore, $g=a^-aa^\dag$ for $a^-=g\in a\{1\}$.
\end{proof}

\begin{remark}\label{rem151}
When we work with general rings then we don't have a guarantee that the existence of a $\{1,2,3\}$ inverse of $a$ implies the existence of $a^\dag$. Because of that, we will primarily focus on the set $a\{1,2,3\}$ rather then $a\{-\dag\}$, although, by previous theorem, these sets coincide when $a$ is Moore-Penrose invertible. Note that if $g\in a\{1,3\}$ then $gag\in a\{1,2,3\}$. Therefore
$$R^{(1,2,3)}=R^{(1,3)}.$$
\end{remark}

The characterization similar to characterization (\ref{eq153}) in the next theorem was given for complex matrices in \cite[Proposition 3.2]{1MP}. The characterization (\ref{eq155}) is a generalization of Theorem \ref{th151MPrls}.

\begin{theorem}\label{th154}
Let $a\in R^{(1,3)}$ and let $p=ah$ and $q=ha$ where $h$ is fixed $\{1,2,3\}$-inverse of $a$. Then we have
\begin{align}
a\{1,2,3\}&=a\{1\}aa\{1,3\}\; \label{eq151}  \\
a\{1,2,3\}&=\left\{\bmatrix h & 0 \\ u & 0 \endbmatrix_{q\times p}: u\in (1-q)Rp\right\}\label{eq152} \\
&=\{h+(1-ha)wah : w\in R\}\label{eq153} \\
a\{1,2,3\}&=\{(a^*a)^-a^*:(a^*a)^-\in (a^*a)\{1\}\}; \label{eq155}.
\end{align}
\end{theorem}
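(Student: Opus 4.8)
The plan is to work throughout with the $2\times2$ matrix representation induced by the decompositions $1=p+(1-p)$ and $1=q+(1-q)$, where $p=ah$ and $q=ha$ for the fixed $\{1,2,3\}$-inverse $h$. From $aha=a$, $hah=h$ and $(ah)^*=ah$ one reads off that $p$ is a self-adjoint idempotent, $q$ is an idempotent, and that $a=\bmatrix a & 0\\ 0 & 0\endbmatrix_{p\times q}$ and $h=\bmatrix h & 0\\ 0 & 0\endbmatrix_{q\times p}$. Since $h$ is a reflexive $g$-inverse of $a$, the earlier description (\ref{eq15all reflexive inverses}) applies with $a^-=h$, so that $a^-aa^-=h$ and $a\{1,2\}$ is parametrized by $x_2\in qR(1-p)$ and $x_3\in(1-q)Rp$.

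To prove (\ref{eq152}) I would start from this description of $a\{1,2\}$ and impose the one remaining Penrose equation $(ag)^*=ag$. A block computation gives $ag=\bmatrix p & ax_2\\ 0 & 0\endbmatrix_{p\times p}$; its self-adjointness forces the off-diagonal block to vanish, i.e. $ax_2=0$, and multiplying on the left by $h$ (so that $hax_2=qx_2=x_2$) yields $x_2=0$. What survives is exactly $\bmatrix h & 0\\ u & 0\endbmatrix_{q\times p}$ with $u\in(1-q)Rp$, which is (\ref{eq152}). Formula (\ref{eq153}) then follows at once: $h+(1-ha)wah=h+(1-q)wp$, and as $w$ ranges over $R$ the term $(1-q)wp$ ranges over all of $(1-q)Rp$, matching the $(2,1)$ block in (\ref{eq152}).

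For (\ref{eq151}) I would argue directly from Theorem \ref{th153}. The inclusion $a\{1,2,3\}\subseteq a\{1\}aa\{1,3\}$ is immediate, since any $g\in a\{1,2,3\}$ lies in $a\{1\}$ and in $a\{1,3\}$ and satisfies $g=gag$, so $g=g\cdot a\cdot g$. For the reverse inclusion I would take $a^-\in a\{1\}$, $k\in a\{1,3\}$ and check that $g:=a^-ak$ satisfies the three equations: $aga=(aa^-a)ka=aka=a$; $ga=a^-(aka)=a^-a$ together with $a^-aa^-a=a^-a$ gives $gag=g$; and $ag=(aa^-a)k=ak$ is self-adjoint.

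I expect (\ref{eq155}) to be the main obstacle, because the range/null-space arguments of the matrix proof are no longer available and the two governing identities must be produced algebraically. Writing $b=a^*a$, I would first establish $a^*=bh$ and $a=h^*b$: from $pa=aha=a$ and $p^*=p$ we get $a^*=a^*p=a^*ah=bh$, and adjoining (with $b^*=b$) gives $a=h^*b$. These also show $b$ is regular, with $hh^*\in b\{1\}$, via $a^*ah=a^*$ and $a^*h^*a^*=(aha)^*=a^*$. For $\{b^-a^*:b^-\in b\{1\}\}\subseteq a\{1,2,3\}$, with $g=b^-a^*$ the identities give $ag=(h^*b)b^-(bh)=h^*bh=(ah)^*(ah)=ah=p$, which is self-adjoint and yields $(1)$ via $aga=pa=a$ and $(3)$ directly, while reflexivity is $gag=b^-bb^-a^*=b^-bb^-bh=b^-bh=b^-a^*=g$. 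For the reverse inclusion, given $g\in a\{1,2,3\}$ I would take $b^-:=gg^*$ and use $g^*a^*=(ag)^*=ag$ to get $gg^*a^*=g(ag)=gag=g$ and $g^*a^*a=(ag)a=aga=a$, whence $b(gg^*)b=a^*ag\,(g^*a^*a)=a^*(aga)=a^*a=b$; thus $gg^*\in b\{1\}$ and $g=(gg^*)a^*$. The two inclusions together give (\ref{eq155}), the recurring delicate point being the systematic replacement of the matrix range conditions by the ideal identities $a^*=bh$ and $a=h^*b$ and by the self-adjointness of $p$.
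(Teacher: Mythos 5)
Your proposal is correct and follows essentially the same route as the paper: for (\ref{eq152})--(\ref{eq153}) both impose $(ag)^*=ag$ on the block form of $a\{1,2\}$ from (\ref{eq15all reflexive inverses}); for (\ref{eq151}) both verify the three Penrose equations directly for $g=a^-ak$; and for (\ref{eq155}) both use $gg^*\in(a^*a)\{1\}$ with $g=(gg^*)a^*$ in one direction and a direct verification (via $a=aha$ and self-adjointness of $ah$, which your identities $a^*=bh$, $a=h^*b$ merely repackage) in the other. Your added observation that $hh^*\in(a^*a)\{1\}$ is a harmless bonus not needed for the equality.
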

\begin{proof}
Let us prove (\ref{eq151}). If $g\in a\{1,2,3\}$ then $g=gag\in a\{1\}aa\{1,3\}$. Conversely, suppose that $g=a^-aa^{(1,3)}$ for some $a^-\in a\{1\}$ and $a^{(1,3)}\in a\{1,3\}$. We have
\begin{align*}
aga&=aa^-aa^{(1,3)}a=a \\
gag&=a^-aa^{(1,3)}aa^-aa^{(1,3)}=a^-aa^{(1,3)}=g \\
ag&=aa^-aa^{(1,3)}=aa^{(1,3)}=(aa^{(1,3)})^*=(ag)^*,
\end{align*}
so $g\in a\{1,2,3\}$.

Let us prove (\ref{eq152}). By (\ref{eq15all reflexive inverses}), $g\in a\{1,2\}$ if and only if $g=\bmatrix h & x_2 \\ x_3 & x_3ax_2 \endbmatrix_{q\times p}$. Since $ag=\bmatrix p & ax_2 \\ 0 & 0 \endbmatrix_{p\times p}$, we obtain that $ag=(ag)^*$ if and only if $ax_2=0$ which is equivalent with $x_2=0$ because $x_2=qx_2=hax_2$. It follows that $g\in a\{1,2,3\}$ if and only if $g=\bmatrix h & 0 \\ x_3 & 0 \endbmatrix_{q\times p}$.
Note that the characterization (\ref{eq152}) is just the matrix record of characterization (\ref{eq153}).

Finally, let us prove the equality (\ref{eq155}). Let $g\in a\{1,2,3\}$ and set $s=gg^*$. We have $s\in (a^*a)\{1\}$ since
$$a^*agg^*a^*a=a^*(ag)^*g^*a^*a=(agaga)^*a=a^*a.$$
Also, $sa^*=gg^*a^*=g(ag)^*=g$, so $g$ belongs to the right-hand side of (\ref{eq155}).

On the other hand, if $g=(a^*a)^-a^*$ for some $(a^*a)^-\in (a^*a)\{1\}$ then
\begin{align*}
aga&=a(a^*a)^-a^*a=aha(a^*a)^-a^*a=(ah)^*a(a^*a)^-a^*a\\
&=h^*a^*a(a^*a)^-a^*a=h^*a^*a=a \\
gag&=(a^*a)^-a^*a(a^*a)^-a^*=(a^*a)^-a^*a(a^*a)^-(aha)^*\\
&=(a^*a)^-a^*a(a^*a)^-a^*ah=(a^*a)^-a^*ah=(a^*a)^-a^*=g \\
ag&=a(a^*a)^-a^*=ah a(a^*a)^-(aha)^*=(ah)^*a(a^*a)^-a^*(ah)^* \\
&=h^*a^*a(a^*a)^-a^*ah=h^*a^*ah=(ah)^*ah=ah,
\end{align*}
so $g\in a\{1,2,3\}$.
\end{proof}

\section{$<^{-\dag}$ order and left-star order in a ring}

The relations $<^{-\dag}$ and $<^{\dag -}$ can be extended from the complex matrix case (equations (\ref{eq15<-+}) and (\ref{eq15<+-})) to a general ring case in the straightforward way. In accordance with Remark \ref{rem151} we give the following definition.

\begin{definition}
For $a,b\in R$ we say that $a$ is lower then or equal to $b$ with respect to $<^{-\dag}$, which is denoted by $a<^{-\dag}b$ if there exists $g\in a\{1,2,3\}$ such that
$$ag=bg \text{ and } ga=gb.$$
\end{definition}

Recall the definition of the minus partial order.

\begin{definition}(\cite{How_to_partialy_order})
For $a,b\in R$ we say that $a<^-b$ if there exists $a^-\in a\{1\}$ such that
$$aa^-=ba^-\; \text{ and } a^-a=a^-b.$$
\end{definition}
The relation $<^-$ is a partial order relation on $R^{(1)}$, \cite[Theorem 1]{How_to_partialy_order}.

Note that if $ag=bg$ and $ga=gb$ for some $g\in a\{1,3\}$ then $h:=gag\in a\{1,2,3\}$ and $ah=bh$, $ha=hb$. We obtain the following result.
\begin{proposition}
Let $a,b\in R$. Then
$$a<^{-\dag} b \; \Leftrightarrow \; ag=bg \text{ and } ga=gb \text{ for some } g\in a\{1,3\}.$$
Also,
$$a<^{-\dag}b \; \Rightarrow \; a<^-b.$$
\end{proposition}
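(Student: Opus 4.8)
The plan is to prove the equivalence by two inclusions and then obtain the second implication as an immediate corollary. For the forward direction of the equivalence I would simply observe that every $g \in a\{1,2,3\}$ satisfies in particular $aga=a$ and $(ag)^*=ag$, so that $a\{1,2,3\} \subseteq a\{1,3\}$. Consequently, any witness $g$ for $a <^{-\dag} b$ is at once a witness for the right-hand side, and this direction requires no computation.

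For the reverse direction I would exploit the observation recorded immediately before the statement. Assume $ag = bg$ and $ga = gb$ for some $g \in a\{1,3\}$, and set $h := gag$. The first task is to check $h \in a\{1,2,3\}$: using $aga=a$ one computes $aha = agaga = aga = a$, then $hah = (gag)a(gag) = g(aga)(gag) = gag = h$, and finally $ah = a(gag) = (aga)g = ag$, whence $(ah)^* = (ag)^* = ag = ah$. Two elementary facts then carry the commutation identities over to $h$: for any $g \in a\{1\}$ both $ag$ and $ga$ are idempotent, since $(ag)^2 = (aga)g = ag$ and $(ga)^2 = g(aga) = ga$, and moreover $ah = ag$ while $ha = ga$ by the computation just made. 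Hence $ah = ag = (ag)(ag) = (bg)(ag) = b(gag) = bh$ (using $bg = ag$ and idempotency of $ag$), and dually $ha = ga = (ga)(ga) = (ga)(gb) = (gag)b = hb$ (using $gb = ga$ and idempotency of $ga$). Thus $h$ witnesses $a <^{-\dag} b$, completing the equivalence.

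The second implication is then formal. Since $a\{1,2,3\} \subseteq a\{1\}$, any witness $g$ for $a <^{-\dag} b$ is already a $g$-inverse of $a$ satisfying $ag = bg$ and $ga = gb$; taking $a^- = g$ in the definition of $<^-$ immediately gives $a <^- b$. The only genuine content of the proposition lies in verifying that $h = gag$ belongs to $a\{1,2,3\}$ and preserves the two identities $ah = bh$, $ha = hb$; I expect this to be the sole step requiring care, though once the idempotency of $ag$ and $ga$ is noted it reduces to a short chain of substitutions built entirely from $aga = a$.
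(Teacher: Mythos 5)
Your proof is correct and takes essentially the same approach as the paper: the remark immediately preceding the proposition states exactly your key step, namely that $h:=gag\in a\{1,2,3\}$ inherits $ah=bh$ and $ha=hb$ from a witness $g\in a\{1,3\}$, with the converse inclusion $a\{1,2,3\}\subseteq a\{1,3\}$ and the implication to $<^-$ (via $a\{1,2,3\}\subseteq a\{1\}$) being formal. You have merely written out the routine verifications that the paper leaves implicit.
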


It follows that $<^{-\dag}$ is reflexive and antisymmetric relation on $R^\dag$.

We can define left-star partial order in a ring by analogy with the matrix case, see definitions in (\ref{eq15*<}). Some additional explanations will be given in the Section 6.

\begin{definition}\label{def151}
For $a,b\in R$ we say that $a\*< b$ if
$$a^*a=a^*b \text{ and } aR\subseteq bR.$$

\end{definition}
Note that the condition $aR\subseteq bR$ is equivalent with $a=bc$ for some $c\in R$ since $R$ has the multiplicative identity. Suppose that $a\*< b$. Then $a^*a=a^*b=b^*a$ and $a=bc$. It is not difficult to show that $(b-a)^*(b-a)=(b-a)^*b$ and $b-a=b(1-c)\in bR$, so $b-a\*< b$. It follows that
\begin{equation}\label{eq1510}
a\*< b \; \Leftrightarrow \; (b-a)\*< b.
\end{equation}

We will prove in the next theorem that, as in the matrix case, the relation $<^{-\dag}$ coincides with relation $\*<$.

\begin{theorem}\label{th155}
Let $a\in R^{(1,3)}$ and $b\in R$. Then
$$a<^{-\dag}b \; \Leftrightarrow \; a\*< b.$$
\end{theorem}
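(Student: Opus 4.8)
The plan is to prove the two implications separately. Throughout I would fix a reflexive $\{1,2,3\}$-inverse $h\in a\{1,2,3\}$ (available since $a\in R^{(1,3)}=R^{(1,2,3)}$ by Remark \ref{rem151}) and set $p:=ah$, so that $aha=a$, $hah=h$ and $ah=(ah)^*$. The workhorse identity is $h=hah=h(ah)^*=hh^*a^*$, which trades the involution for a factor of $h$; the same computation gives $g=gg^*a^*$ for every $g\in a\{1,2,3\}$. By the Proposition preceding Definition \ref{def151} I may always replace a $\{1,3\}$-inverse by a reflexive one without changing $ag$ or $ga$, so working inside $a\{1,2,3\}$ is harmless.

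For $a<^{-\dag}b\Rightarrow a\*< b$, assume $g\in a\{1,2,3\}$ satisfies $ag=bg$ and $ga=gb$. Since $ag$ is self-adjoint and $aga=a$, I get $a^*=(aga)^*=a^*g^*a^*=a^*(ag)=a^*ag$, using $g^*a^*=(ag)^*=ag$. Multiplying this on the right by $b$ and using $gb=ga$ gives $a^*b=a^*a(gb)=a^*a(ga)=a^*(aga)=a^*a$, the first defining condition of $\*<$. For the second, $a=aga=(ag)a=(bg)a=b(ga)$ shows $a\in bR$, whence $aR\subseteq bR$. This direction is routine.

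The converse $a\*< b\Rightarrow a<^{-\dag}b$ is where the real work lies: from $a^*a=a^*b$ and $a=bc$ (some $c\in R$) I must manufacture one $g\in a\{1,2,3\}$ with $ag=bg$ and $ga=gb$. The equation $ga=gb$ is cheap, since $g(b-a)=gg^*a^*(b-a)=gg^*(a^*b-a^*a)=0$ for \emph{every} $g\in a\{1,2,3\}$. The difficulty — and what I expect to be the main obstacle — is $ag=bg$: for every $g\in a\{1,2,3\}$ one has $ag=ah=p$, a \emph{fixed} self-adjoint idempotent, whereas $bg$ genuinely varies with $g$. Hence no generic (nor the ``canonical'') $\{1,2,3\}$-inverse will do; the inverse must be tailored so as to remember $b$.

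My proposal is the $b$-adapted inverse $g:=ch$, justified by two short identities. First, applying the workhorse identity to $b-a$ yields $hb=ha$ (here $a^*b=a^*a$ enters). Second, feeding this into $a=bc$ gives $ha=h(bc)=(hb)c=(ha)c$, and left-multiplying by $a$ turns $aha=a$ into the crucial identity $ac=a$. With these the verification is pure bookkeeping: $ag=(ac)h=ah=p=(bc)h=bg$ and $ga=c(ha)=c(hb)=gb$, while $aga=(ac)(ha)=aha=a$, $gag=c(hac)h=c\,hah=ch=g$, and $ag=p=(ah)^*$, so $g\in a\{1,2,3\}$ and $a<^{-\dag}b$. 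The only genuinely creative steps are guessing $g=ch$ and isolating $ac=a$ (equivalently, that $a=bc$ may be taken with $ac=a$); once $ac=a$ and $hb=ha$ are in hand, both order conditions drop out of the defining equations.
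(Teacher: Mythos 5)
Your proof is correct; the easy direction ($a<^{-\dag}b\Rightarrow a\*< b$) is essentially the paper's own computation, but your proof of the hard direction is genuinely different. The paper fixes $h\in a\{1,2,3\}$, sets $p=ah$, $q=ha$, and expands $b$ and $c$ (where $a=bc$) as $2\times2$ matrices over the decompositions $1=p+(1-p)$ and $1=q+(1-q)$; from $a^*a=a^*b$ and $a=bc$ it extracts the block relations $b_1=a$, $b_2=0$, $c_1=q$, $c_2=0$, $b_3+b_4c_3=0$, and then checks that $g=\bmatrix h & 0 \\ c_3h & 0 \endbmatrix_{q\times p}$, which lies in $a\{1,2,3\}$ by Theorem \ref{th154}, does the job. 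You instead distill two element-wise identities, $hb=ha$ (from $h=hh^*a^*$ and $a^*(b-a)=0$) and $ac=a$ (from $ha=(hb)c=(ha)c$ and $aha=a$), and take $g=ch$; all of your verifications ($ag=ach=ah=bch=bg$, $ga=cha=chb=gb$, $aga=a$, $gag=c(hac)h=ch=g$, $(ag)^*=(ah)^*=ag$) are valid. In fact you produce the same inverse as the paper: since $c_1=q$, $c_2=0$ and $qh=hah=h$, one computes $ch=\bmatrix h & 0 \\ c_3h & 0 \endbmatrix_{q\times p}$, exactly the paper's block matrix $g$. The difference is purely one of machinery, and it cuts both ways: your argument is shorter, purely equational, and independent of the matrix-representation formalism, so it is the more elementary and self-contained proof of this single theorem; the paper's block computation, however, sets up the representation patterns that are reused immediately afterwards (Theorems \ref{th1514}, \ref{th1515}, \ref{th157}, \ref{th158}), which is why the authors route the proof through Theorem \ref{th154}. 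One small remark: your motivational claim that $ag=ah$ for \emph{every} $g\in a\{1,2,3\}$ is true but stated without proof; it needs a short argument (e.g.\ $ag\cdot ah=ag$, $ah\cdot ag=ah$, then take adjoints), though nothing in your actual proof depends on it.
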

\begin{proof}
Suppose that $a<^{-\dag}b$. There exists $g\in a\{1,2,3\}$ such that $ag=bg$ and $ga=gb$. It follows that $a=aga=bga$, so $aR\subseteq bR$. Also
$$a^*b=(aga)^*b=a^*agb=a^*aga=a^*a,$$
and thus $a\*<b$. Suppose now that $a\*<b$, that is, $a^*a=a^*b$ and $a=bc$ for some $c\in R$. Fix $h\in a\{1,2,3\}$ and set $p=ah$ and $q=ha$. As before, $p=p^*$ and
$$a=\bmatrix a & 0 \\ 0 & 0 \endbmatrix_{p\times q}, \quad h=\bmatrix h & 0 \\ 0 & 0 \endbmatrix_{q\times p}.$$
Suppose that
$$b=\bmatrix b_1 & b_2 \\ b_3 & b_4 \endbmatrix_{p\times q}, \quad c=\bmatrix c_1 & c_2 \\ c_3 & c_4 \endbmatrix_{q\times q}.$$
From
$$a^*a=\bmatrix a^*a & 0 \\ 0 & 0 \endbmatrix_{q^*\times q}=\bmatrix a^*b_1 & a^*b_2 \\ 0 & 0 \endbmatrix_{q^*\times q}=a^*b$$
we obtain $a^*a=a^*b_1$ and $a^*b_2=0$. Therefore, $b_1=pb_1=ah b_1=h^*a^*b_1=h^*a^*a=pa=a$ and similarly $b_2=0$. From $a=bc$ we obtain
$$\bmatrix a & 0 \\ 0 & 0 \endbmatrix_{p\times q}=\bmatrix a & 0 \\ b_3 & b_4 \endbmatrix_{p\times q}\bmatrix c_1 & c_2 \\ c_3 & c_4 \endbmatrix_{q\times q}=\bmatrix ac_1 & ac_2 \\ b_3c_1+b_4c_3  & b_3c_2+b_4c_4 \endbmatrix_{p\times q}.$$
As in the previous part of the proof, we can easily find that $c_1=q$, $c_2=0$ and $b_3+b_4c_3=0$. Note that $c_3h\in (1-q)Rp$. Set
$$g=\bmatrix h & 0 \\ c_3h & 0 \endbmatrix_{q\times p}.$$
By Theorem \ref{th154}, we have that $g\in a\{1,2,3\}$. The direct calculation shows that
$$bg=\bmatrix a & 0 \\ b_3 & b_4 \endbmatrix_{p\times q}\bmatrix h & 0 \\ c_3h & 0 \endbmatrix_{q\times p}=\bmatrix p & 0 \\ b_3h +b_4c_3h & 0 \endbmatrix_{p\times p}=\bmatrix p & 0 \\ 0 & 0 \endbmatrix_{p\times p}=ag.$$
Similarly
$$ga=gb=\bmatrix q & 0 \\ c_3 & 0 \endbmatrix_{q\times q}.$$
It follows by definition that $a<^{-\dag}b$.
\end{proof}

Let us note the following easy observation. Suppose that $a\in R^\dag$ and $b\in R$. If $a^*a=a^*b$ then $a^\dag b=a^\dag(a^\dag)^*a^*b=a^\dag(a^\dag)^*a^*a=a^\dag a$. Conversely, if $a^\dag a=a^\dag b$ then $a^*b=a^*aa^\dag b=a^*aa^\dag a=a^*a$. Therefore,
\begin{equation}
a^*a=a^*b \; \Leftrightarrow \; a^\dag a=a^\dag b.
\end{equation}
From Theorem \ref{th155} it follows that
$$a<^{-\dag}b \; \Leftrightarrow \; a^\dag a=a^\dag b \text{ and } aR\subseteq bR.$$

\begin{remark}
Since Theorem \ref{th155} shows the equality of relations $<^{-\dag}$ and $\*<$, we will exclusively use the mark $\*<$ for both relations. Of course that we are in the position to choose which of the two definitions we will use for $\*<$ as needed.
\end{remark}

The theory of matrix partial orders based on generalized inverses is well developed, see monograph \cite{knjiga}. The unified theory of these relations was introduced by Mitra in \cite{Unified_theory}. This unified theory has recently been generalized in an arbitrary ring context by Raki\'c in \cite{Jedinstvena_teorija}. By the end of this section we will consider our relations through the prism of this unified theory.

First we need to introduce some notions which is given in \cite{Unified_theory} and \cite{Jedinstvena_teorija}. If $\mathcal{P}(R)$ denotes the power set of $R$ then a function $\mathcal{G}:R\rightarrow \mathcal{P}(R)$ is called a $g$-map if for every $a\in R$, $\mathcal{G}(a)$ is a certain subset of $a\{1\}$. The set $\Omega_\mathcal{G}=\{a\in R : \mathcal{G}(a)\neq \emptyset\}$ is called the support of the $g$-map $\mathcal{G}$. 

For $a,b\in R$ and a $g$-map $\mathcal{G}$ we say
$$a<^\G b \quad \text{if} \quad a\in\Omega_\G,\, ga=gb \text{ and } ag=bg \quad \text{for some} \quad g\in \G(a).$$

We will focus on the $g$-map $\mathcal{G}(a)=a\{1,2,3\}$, for which $\Omega_\mathcal{G}=R^{(1,2,3)}=R^{(1,3)}$.

For $a\in R$ and $g$-map $\mathcal{G}$, the class
$$\tilde{\G}(a)=\{g\in R : ga=ha, \, ag=ah \text{ for some } h\in \G(a)\}$$
is called the completion of $\mathcal{G}(a)$. Let us denote by $\tilde{a}\{i_1,\dots,i_n\}$ the completion of $a\{i_1,\dots,i_n\}$.

If $\mathcal{G}$ is a $g$-map then we say that the pair $(a,b)$, $a,b\in R$ satisfies the (T)-condition if $hah\in \mathcal{G}(a)$ for every $h\in \mathcal{G}(b)$.

We say that a $g$-map $\mathcal{G}$ is semi-complete if for every $a\in R$, the pair $(a,a)$ satisfies the (T)-condition.

\begin{proposition}
The $g$-map $\mathcal{G}(a)=a\{1,2,3\}$ is semi-complete and its completion is $a\{1,3\}$.
\end{proposition}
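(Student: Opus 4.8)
The plan is to prove the two assertions separately; both reduce directly to the defining equations of a $\{1,2,3\}$-inverse, so I do not expect a serious obstacle beyond careful bookkeeping. In a general ring one cannot cancel, so each manipulation below must invoke one of the relations $aha=a$, $hah=h$, $(ah)^*=ah$ rather than dividing through. The only point requiring a genuine idea is the choice of a suitable witness $h$ for the reverse inclusion in the computation of the completion.

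For semi-completeness I must check that the pair $(a,a)$ satisfies the (T)-condition, i.e. that $hah\in a\{1,2,3\}$ for every $h\in a\{1,2,3\}$. This is immediate: by the second defining equation, every $h\in a\{1,2,3\}$ already satisfies $hah=h$, so $hah=h\in a\{1,2,3\}$ and there is nothing more to do.

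The substance lies in identifying the completion $\tilde{\mathcal{G}}(a)=\{g\in R:\ ga=ha \text{ and } ag=ah \text{ for some } h\in a\{1,2,3\}\}$ with $a\{1,3\}$, which I would do by proving two inclusions. For $\tilde{\mathcal{G}}(a)\subseteq a\{1,3\}$, take such a $g$ with witness $h$ and verify the two defining equations of a $\{1,3\}$-inverse: from $ga=ha$ and $aha=a$ one gets $aga=a(ga)=a(ha)=(ah)a=aha=a$, while from $ag=ah$ together with $(ah)^*=ah$ one gets $(ag)^*=(ah)^*=ah=ag$; hence $g\in a\{1,3\}$.

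For the reverse inclusion $a\{1,3\}\subseteq\tilde{\mathcal{G}}(a)$, given $g\in a\{1,3\}$ I would exhibit the witness $h:=gag$. By Remark \ref{rem151} we have $h\in a\{1,2,3\}$, and using $aga=a$ one checks $ha=gaga=g(aga)=ga$ and $ah=agag=(aga)g=ag$, so $g$ meets the two conditions defining membership in $\tilde{\mathcal{G}}(a)$. Combining the two inclusions yields $\tilde{\mathcal{G}}(a)=a\{1,3\}$, completing the proof. The expected difficulty, such as it is, is purely in recognizing $gag$ as the correct witness and in performing the substitutions using only the structural relations, since no cancellation is available.
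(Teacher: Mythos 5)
Your proposal is correct and follows essentially the same route as the paper's proof: semi-completeness is the trivial observation that $hah=h$ for $h\in a\{1,2,3\}$, the inclusion $\tilde{\mathcal{G}}(a)\subseteq a\{1,3\}$ is verified directly from the defining relations, and the reverse inclusion uses the same witness $h=gag$. No gaps; the only cosmetic difference is that you derive $aga=a$ from $ga=ha$ where the paper uses $ag=ah$, which is immaterial.
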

\begin{proof}
Let $\mathcal{G}(a)=a\{1,2,3\}$. If $g\in\tilde{\mathcal{G}}(a)$ then $ag=ah$ and $ga=ha$ for some $h\in a\{1,2,3\}$. From $ag=ah$ we obtain $aga=aha=a$ and $(ag)^*=(ah)^*=ah=ag$, so $g\in a\{1,3\}$. Suppose now that $g\in a\{1,3\}$. Then $h:=gag\in a\{1,2,3\}$ and $ah=agag=ag$, $ha=gaga=ga$, so $g\in\tilde{\mathcal{G}}(a)$. Since $h=hah$ for every $h\in \mathcal{G}(a)$, we conclude that $\mathcal{G}$ is semi-complete.
\end{proof}

In the next theorem, for a fixed $a\in R^{(1,3)}$, we will characterize all elements which is greater then $a$ with respect to $\*<$ order.

\begin{theorem}\label{th1514}
For $a\in R^{(1,3)}$ we have
  \begin{equation*}
  \{b\in R : a\*< b\}=\{a+(1-ag)d(1-ga) : g\in a\{1,2,3\},\, d\in R\}.
  \end{equation*}
That is, $a\*< b$ if and only if there exists $g\in a\{1,2,3\}$ such that
$$b=\bmatrix a & 0 \\ 0 & v \endbmatrix_{p\times q},$$
for some $v\in (1-p)R(1-q)$, where $p=ag$ and $q=ga$.
\end{theorem}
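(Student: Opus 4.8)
The plan is to reduce everything to the characterization of $\*<$ by $\{1,2,3\}$-inverses proved in Theorem \ref{th155}, after which both inclusions become short block computations. Recall that $a\*< b$ holds exactly when there is some $g\in a\{1,2,3\}$ with $ag=bg$ and $ga=gb$. Fixing such a $g$ and setting $p=ag$, $q=ga$, I would first record the standing facts: $p$ is a self-adjoint idempotent (since $(ag)^*=ag$ and $agag=a(gag)=ag$), while $q$ is an idempotent (since $gaga=g(aga)=ga$); and relative to $1=p+(1-p)$, $1=q+(1-q)$ one has $a=paq$ and $g=qgp$, exactly the matrix picture already used in (\ref{eq159}) and in the proof of Theorem \ref{th155}.

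For the inclusion $\{b:a\*< b\}\subseteq\{a+(1-ag)d(1-ga)\}$, I would take the $g$ supplied by Theorem \ref{th155} and compute the four blocks of $b$ with respect to $p\times q$. Using $bg=ag$, $gb=ga$ together with $aga=a$ and $gag=g$, each block collapses after a single substitution: $pbq=agbga=a$, $pb(1-q)=agb-agbga=a-a=0$ (as $agb=a(gb)=a(ga)=a$), and $(1-p)bq=bga-agbga=a-a=0$ (as $bga=(bg)a=(ag)a=a$). This leaves $b=a+(1-p)b(1-q)=a+(1-ag)b(1-ga)$, so $b$ has the stated form with $d=b$, equivalently the block form with $v=(1-ag)b(1-ga)\in(1-p)R(1-q)$.

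For the reverse inclusion, I would start from $b=a+(1-ag)d(1-ga)$ for some $g\in a\{1,2,3\}$ and $d\in R$, and check that this same $g$ already witnesses $a<^{-\dag}b$. Indeed $bg=ag+(1-ag)d(1-ga)g=ag$ because $(1-ga)g=g-gag=0$, and symmetrically $gb=ga$ because $g(1-ag)=g-gag=0$. Since $g\in a\{1,2,3\}$ this is precisely the definition of $a<^{-\dag}b$, so Theorem \ref{th155} yields $a\*< b$. The equivalence with the block formulation is then immediate: any $v\in(1-p)R(1-q)$ satisfies $v=(1-ag)v(1-ga)$, so the description $b=\bmatrix a & 0 \\ 0 & v \endbmatrix_{p\times q}$ and the expression $a+(1-ag)d(1-ga)$ (take $d=v$) parametrize the same set.

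I do not expect a genuine obstacle here; once Theorem \ref{th155} is invoked the argument is essentially bookkeeping. The points to watch are that $q=ga$ is only an idempotent and generally \emph{not} self-adjoint, since only the $\{1,2,3\}$ relations are available (no $\{4\}$-condition), so no symmetry of $q$ may be used; and that the $g$ appearing in the first inclusion is the \emph{specific} inverse furnished by Theorem \ref{th155}, whereas the set on the right merely asserts the existence of some admissible $g$, so the existential quantifier must be handled in the correct direction in each half of the proof.
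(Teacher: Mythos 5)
Your proof is correct, but it takes a genuinely different route from the paper. The paper disposes of this theorem in one line: having established that the $g$-map $\mathcal{G}(a)=a\{1,2,3\}$ is semi-complete, it invokes Theorem \ref{th155} together with Theorem 3.2 and Corollary 3.3 of the cited unified-theory paper \cite{Jedinstvena_teorija}, so the statement appears there as an instance of a general phenomenon valid for any semi-complete $g$-map. You instead carry out the argument by hand: after passing through Theorem \ref{th155} to get a witness $g\in a\{1,2,3\}$ with $ag=bg$ and $ga=gb$, you compute the four blocks $pbq=a$, $pb(1-q)=0$, $(1-p)bq=0$ directly (each collapsing via $agb=bga=a$), and for the converse you verify that the same $g$ appearing in $b=a+(1-ag)d(1-ga)$ witnesses $a<^{-\dag}b$ because $(1-ga)g=0=g(1-ag)$. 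All of these computations check out, the identification of the block form with the parametrization $d\mapsto(1-ag)d(1-ga)$ is handled correctly, and your cautionary remarks (that $q=ga$ need not be self-adjoint, and that the existential quantifier over $g$ must be tracked in each direction) are exactly the right ones. What your approach buys is self-containedness: a reader needs nothing beyond Theorem \ref{th155} and block bookkeeping, whereas the paper's reader must consult an external reference and absorb the notions of $g$-map, completion and (T)-condition. What the paper's approach buys is economy and context: the same citation machinery simultaneously yields Theorem \ref{th158} and related results, so the left-star order is seen uniformly alongside the minus, star, sharp and core orders rather than through an ad hoc calculation.
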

\begin{proof}
Since the $g$-map $\mathcal{G}(a)=a\{1,2,3\}$ is semi-complete, the proof is a direct consequence of Theorem \ref{th155} and Theorem 3.2 and Corollary 3.3 in \cite{Jedinstvena_teorija}.
\end{proof}

The following result was originally proved for complex matrices in Theorem 2.1 in \cite{BaksalaryBaksalary}. The same result for $<^{-\dag}$ order has recently been proved in \cite{1MP}.

\begin{theorem}\label{th1515}
Let $a\in R^{(1,3)}$, $h\in a\{1,2,3\}$, and $p=ah$, $q=ha$. Then $a\*< b$ if and only if
$$b=\bmatrix a & 0 \\ b_4u & b_4 \endbmatrix_{p\times q},$$
for some $b_4\in (1-p)R(1-q)$ and some $u\in Rq$.
\end{theorem}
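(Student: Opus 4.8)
The plan is to compute directly with the matrix representations of $a$ and $b$ relative to the \emph{fixed} idempotents $p=ah$ and $q=ha$, reusing the block algebra already set up in the proof of Theorem \ref{th155}. Since $h\in a\{1,2,3\}$ we have $p=ah=(ah)^*$, so $p^*=p$, and $a=\bmatrix a & 0 \\ 0 & 0\endbmatrix_{p\times q}$, $h=\bmatrix h & 0 \\ 0 & 0\endbmatrix_{q\times p}$. I would treat the two implications separately.

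For the forward direction, suppose $a\*< b$, so that $a^*a=a^*b$ and $a=bc$ for some $c\in R$ (Definition \ref{def151}, with $aR\subseteq bR$ rewritten as $a=bc$). Writing $b=\bmatrix b_1 & b_2 \\ b_3 & b_4\endbmatrix_{p\times q}$, the equation $a^*a=a^*b$ forces $b_1=a$ and $b_2=0$ by exactly the argument in the proof of Theorem \ref{th155} (one represents $a^*$ over $q^*\times p$ and uses $b_1=pb_1=h^*a^*b_1=h^*a^*a=a$). It then remains to read off $b_3=b_4u$ from $a=bc$. Writing $c=\bmatrix c_1 & c_2 \\ c_3 & c_4\endbmatrix_{q\times q}$ and expanding $bc=a$ blockwise, the top row gives $ac_1=a$ and $ac_2=0$, whence $c_1=q$ and $c_2=0$ after left multiplication by $h$; the bottom-left block gives $b_3q+b_4c_3=0$, and since $b_3\in(1-p)Rq$ satisfies $b_3q=b_3$, we get $b_3=-b_4c_3$. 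Putting $u:=-c_3\in(1-q)Rq\subseteq Rq$ yields the stated form.

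For the converse, assume $b=\bmatrix a & 0 \\ b_4u & b_4\endbmatrix_{p\times q}$ with $b_4\in(1-p)R(1-q)$ and $u\in Rq$, and verify the two defining conditions of $\*<$ by hand. Representing $a^*=\bmatrix a^* & 0 \\ 0 & 0\endbmatrix_{q^*\times p}$ as in (\ref{eq15matr_rep_za_x_zvezda}), a single block multiplication gives $a^*b=\bmatrix a^*a & 0 \\ 0 & 0\endbmatrix_{q^*\times q}=a^*a$. For $aR\subseteq bR$ it suffices to produce $c$ with $bc=a$; I would try $c=\bmatrix q & 0 \\ -(1-q)u & 0\endbmatrix_{q\times q}$ and check the blocks, the only nontrivial one being the bottom-left entry $b_4uq-b_4(1-q)u=b_4u-b_4u=0$, using $b_4(1-q)=b_4$ and $uq=u$. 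This establishes $a\*< b$.

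The step I expect to require the most care is the bookkeeping of block positions under the involution: $p=ah$ is self-adjoint but $q=ha$ need not be, so $a^*$, $a^*a$ and $a^*b$ must be consistently represented over the decompositions $1=q^*+(1-q^*)$ and $1=p+(1-p)$. A shorter but less self-contained alternative is to deduce the result from Theorem \ref{th1514}: parametrize $g=\bmatrix h & 0 \\ u' & 0\endbmatrix_{q\times p}$ via (\ref{eq152}), note $ag=p$ and $ga=q+u'a$, and expand $b=a+(1-ag)d(1-ga)=a+(1-p)d(1-q-u'a)$ to obtain $b_4=(1-p)d(1-q)$ and $b_3=-b_4(u'a)$; since $u'\mapsto u'a$ maps $(1-q)Rp$ onto $(1-q)Rq$, matching $u'a$ with $-(1-q)u$ gives the equivalence. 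I would nonetheless present the direct verification.
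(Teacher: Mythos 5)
Your proof is correct, but it takes a genuinely different route from the paper's. The paper proves Theorem \ref{th1515} entirely through the $<^{-\dag}$ characterization: for necessity it invokes Theorem \ref{th155} to produce $g\in a\{1,2,3\}$ with $ag=bg$ and $ga=gb$, writes $g=\bmatrix h & 0 \\ v & 0 \endbmatrix_{q\times p}$ by (\ref{eq152}), and reads $b_1=a$, $b_2=0$ off $ga=gb$ and $b_3=-b_4va=b_4u$ (with $u=-va\in Rq$) off $ag=bg$; for sufficiency it exhibits the witnessing inverse $g=\bmatrix h & 0 \\ -(1-q)uh & 0 \endbmatrix_{q\times p}$, checks the two equations, and appeals to Theorem \ref{th155} once more. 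You never touch $<^{-\dag}$: you argue directly from Definition \ref{def151}, extracting the block form from $a^*a=a^*b$ and $a=bc$ --- which, as you note, is exactly the computation already carried out inside the proof of Theorem \ref{th155}, supplemented by the observation $u=-c_3\in(1-q)Rq\subseteq Rq$ --- and in the converse you verify the definition by hand, producing the explicit factor $c=\bmatrix q & 0 \\ -(1-q)u & 0 \endbmatrix_{q\times q}$ with $bc=a$ and checking $a^*b=a^*a$ blockwise. Both arguments rest on the same $p\times q$ block calculus and on $p^*=p$; what the paper's route buys is brevity (given Theorems \ref{th154} and \ref{th155}) together with an explicit $\{1,2,3\}$-inverse realizing the order relation, while your route buys self-containedness and instead displays the factor witnessing $aR\subseteq bR$. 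Your detailed checks are sound: $b_4uq-b_4(1-q)u=0$ uses precisely $uq=u$ and $b_4(1-q)=b_4$, and the involution bookkeeping (representing $a^*$ over $q^*\times p$, legitimate since $p$ is self-adjoint) is handled correctly. The alternative sketch via Theorem \ref{th1514} would also work and is closer in spirit to the paper's unified-theory machinery, provided one verifies, as you indicate, that $u'\mapsto u'a$ maps $(1-q)Rp$ onto $(1-q)Rq$, so that the two parametrizations of the $(2,1)$ block agree.
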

\begin{proof}
If $a\*< b$ then $ag=bg$ and $ga=gb$ for some $g=\bmatrix h & 0 \\ v & 0 \endbmatrix_{q\times p}\in a\{1,2,3\}$ (Theorem \ref{th154}). Of course, $a=\bmatrix a & 0 \\ 0 & 0 \endbmatrix_{p\times q}$. Let $b=\bmatrix b_1 & b_2 \\ b_3 & b_4 \endbmatrix_{p\times q}$. From $ga=gb$ we obtain $hb_1=q$ and $hb_2=0$, so $b_1=a$ and $b_2=0$. From $ag=bg$ we obtain $b_3h+b_4v=0$. Multiplying this by $a$ from the right, we get $b_3=-b_4va=b_4u$ for $u=-va\in Rq$. Conversely, let $b=\bmatrix a & 0 \\ b_4u & b_4 \endbmatrix_{p\times q}$ where $b_4\in (1-p)R(1-q)$ and $u\in Rq$ are arbitrary. For
$$g=\bmatrix h & 0 \\ -(1-q)uh & 0 \endbmatrix_{q\times p}$$
we know that $g\in a\{1,2,3\}$ and
it is easy to show that $ag=bg$ and $ga=gb$, so $a\*< b$.
\end{proof}

It follows by (\ref{eq159}) that both in Theorem \ref{th1514} and in Theorem \ref{th1515} the element $a$ in matrix representations is $(p,q)$-invertible.

The following theorem gives the most natural matrix representations of $a$ and $b$ when $a\*< b$, because all elements appearing in two matrices are invertible in a sense of Definition \ref{def15p,q_invertibilnost}.

\begin{theorem}\label{th158}
  Let $a,b\in R^{(1,3)}$. Fix $h\in b\{1,2,3\}$ and set
    \begin{align*}
    &p_1=ah, \quad p_2=(b-a)h, \quad p_3=1-bh\\
    &q_1=ha, \quad q_2=h(b-a), \quad q_3=1-hb.
    \end{align*}
The following statements are equivalent
\begin{enumerate}[{\rm (i)}]
\item $a\*< b$;

\item The equalities
$$1=p_1+p_2+p_3 \quad \text{and} \quad 1=q_1+q_2+q_3$$
are respectively an orthogonal decomposition and a decomposition of the identity of the ring $R$ with respect to which $a$ and $b$ have the following matrix forms:
    \begin{equation}\label{decomposition main}
      a=\left[
      \begin{array}{ccc}
        a & 0 & 0 \\
        0 & 0 & 0 \\
        0 & 0 & 0
      \end{array}
      \right]_{p\times q}, \quad b=\left[
      \begin{array}{ccc}
        a & 0 & 0 \\
        0 & b-a & 0 \\
        0 & 0 & 0
      \end{array}
      \right]_{p\times q},
    \end{equation}
\end{enumerate}
where $a$ is $(p_1,q_1)$-invertible with $a^-_{p_1,q_1}=hah$ and $b-a$ is $(p_2,q_2)$-invertible with $(b-a)^-_{p_2,q_2}=h-hah$.
\end{theorem}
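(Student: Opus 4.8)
\emph{Proof proposal.} The plan is to prove the two implications separately, exploiting the reduction $a\*< b \Leftrightarrow (b-a)\*< b$ from (\ref{eq1510}), which lets me treat $a$ and $c:=b-a$ on an equal footing with the single fixed inverse $h\in b\{1,2,3\}$. I observe first that both identity decompositions are automatic: $p_1+p_2=ah+(b-a)h=bh$ and $q_1+q_2=ha+h(b-a)=hb$, so $p_1+p_2+p_3=1$ and $q_1+q_2+q_3=1$ hold for arbitrary $a,b$ with no hypothesis. Hence the genuine content of (ii) is that the $p_i$ are mutually orthogonal self-adjoint idempotents, the $q_i$ mutually orthogonal idempotents, that $a$ and $b$ take the stated block forms, and that the corners are invertible in the sense of Definition \ref{def15p,q_invertibilnost}.

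For (i)$\Rightarrow$(ii) the crux, and the step I expect to be the main obstacle, is to show that $p_1=ah$ is a self-adjoint idempotent. Since $aR\subseteq bR$ and $bhb=b$, I first obtain $a=bha$, whence $a^*=a^*(bh)^*=a^*bh$ because $bh$ is self-adjoint. Right-multiplying the left-star identity $a^*a=a^*b$ by $h$ then yields $a^*ah=a^*bh=a^*$, and taking adjoints gives $h^*a^*a=a$. Substituting the latter into $ah$ and invoking the former, $ah=h^*a^*ah=h^*a^*=(ah)^*$, so $p_1$ is self-adjoint; idempotency is then $(ah)^2=h^*a^*\cdot ah=h^*(a^*ah)=h^*a^*=ah$, and the same chain gives $aha=a$. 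Applying everything to $c=b-a$ (legitimate by (\ref{eq1510})) shows $p_2=ch$ is likewise a self-adjoint idempotent with $chc=c$.

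The orthogonality relations are then short calculations, each powered by one identity. For the $p_i$: writing $p_1=(ah)^*=h^*a^*$ gives $p_1p_2=h^*(a^*c)h=0$ since $a^*c=a^*(b-a)=0$, and symmetrically $p_2p_1=0$; the identities $hbh=h$ together with $bha=a$, $bhc=c$ give $p_ip_3=p_3p_i=0$. For the $q_i$, idempotency comes from $aha=a$, $chc=c$, $hbh=h$; the relations $q_1q_2=h(ah)c=hp_1c=0$ (with $p_1c=h^*a^*c=0$) and its mirror give $q_1q_2=q_2q_1=0$, while $a=bha$, $c=bhc$ give $q_3q_1=q_3q_2=0$, and the remaining products vanish upon multiplying $1=q_1+q_2+q_3$ by the $q_i$. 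The block forms are read off from $p_iaq_j$ and $p_icq_j$, using $p_1aq_1=aha=a$, $p_2a=h^*c^*a=0$, $p_3a=(1-bh)a=0$, $(ah)a+(ah)c=ahb=a$, and the analogous facts for $c$, so that $a$ and $b=a+c$ acquire precisely the shapes in (\ref{decomposition main}). Finally the corner inverses are confirmed directly: $a\cdot hah=(aha)h=ah=p_1$ and $hah\cdot a=h(aha)=ha=q_1$, while $c(h-hah)=ch-chah=p_2-p_2p_1=p_2$ and $(h-hah)c=hc-hahc=q_2$.

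For (ii)$\Rightarrow$(i) the argument is immediate from the block structure and needs no new idea. Writing $a=p_1aq_1$ with $p_1^*=p_1$ and $c=p_2cq_2$, orthogonality of the $p_i$ gives $a^*c=q_1^*a^*(p_1p_2)cq_2=0$, that is, $a^*a=a^*b$. Moreover $p_1a=a$ and $p_2a=0$ yield $bha=(p_1+p_2)a=a$, so $a=b(ha)\in bR$, i.e. $aR\subseteq bR$. By Definition \ref{def151} this is exactly $a\*< b$, completing the equivalence.
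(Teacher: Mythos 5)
Your proof is correct, but it takes a genuinely different route from the paper's. The paper does not reprove the bulk of the equivalence at all: it invokes Theorem 3.1 of \cite{Jedinstvena_teorija}, valid for an arbitrary semi-complete $g$-map, applied to $\mathcal{G}(a)=a\{1,2,3\}$ (via Theorem \ref{th155}, which translates $\*<$ into $<^{-\dag}$), and only supplies the two facts specific to the left-star order: self-adjointness of $p_1$ and the $(p_i,q_i)$-invertibility of the corners. Moreover, the paper's proof of self-adjointness uses the order in the form $ag=bg$, $ga=gb$ for some $g\in a\{1,2,3\}$, computing $p_1=ah=agah=agbh=(bhag)^*=(bhbg)^*=(bg)^*=(ag)^*=ag$. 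You instead work entirely from Definition \ref{def151}: from $aR\subseteq bR$ and $a^*a=a^*b$ you derive $a^*ah=a^*$, hence $ah=(ah)^*$ and $aha=a$ --- in effect re-deriving the inclusion $b\{1,2,3\}\subseteq a\{1,3\}$ of Theorem \ref{th156}(ii) from scratch, so there is no circularity --- and then use (\ref{eq1510}) to treat $b-a$ symmetrically, checking orthogonality, the block forms, the corner inverses, and the converse implication by hand. Your approach buys a self-contained, elementary proof that never leaves the paper and never passes through $<^{-\dag}$ or the unified theory; the paper's approach buys brevity and situates the theorem inside the general framework, where the decompositions and block forms come for free. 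Two small points to tighten in your write-up: Definition \ref{def15p,q_invertibilnost} also requires the inverses to lie in $q_1Rp_1$ and $q_2Rp_2$, so record the one-line checks $q_1(hah)p_1=hah$ and $q_2(h-hah)p_2=h-hah$ (the paper sidesteps this by writing the candidates as $q_1hp_1$ and $q_2hp_2$); and the vanishing of the off-diagonal blocks of $a$ rests on $aq_2=ahc=p_1c=0$ and $aq_3=a-ahb=0$, which deserve to be displayed rather than folded into the single line $(ah)a+(ah)c=ahb=a$.
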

\begin{proof}
The most of the theorem directly follows by Theorem 3.1 in \cite{Jedinstvena_teorija} where the similar result is proven for arbitrary semi-complete $g$-map. We will only prove here the results which are not included in this theorem (which is specific for $\*<$ order). Thus we have to prove that decomposition of the identity $1=p_1+p_2+p_3$ is orthogonal and we have to prove the pseudo invertibility of $a$ and $b-a$.
Suppose that $a\*< b$. Then $ag=bg$ and $ga=gb$ for some $g\in a\{1,2,3\}$. The decomposition of the identity $1=p_1+p_2+p_3$ is orthogonal because the idempotent $p_1$ is self-adjoint:
\begin{equation}\label{eq1512}
p_1=ah=agah=agbh=(bhag)^*=(bhbg)^*=(bg)^*=(ag)^*=ag.
\end{equation}
By (\ref{eq1512}), the direct check shows that $a^-_{p_1,q_1}=q_1hp_1=hah$ and $(b-a)^-_{p_2,q_2}=q_2hp_2=h-hah$:
\begin{align*}
ahah&=agah=ah=p_1 \\
haha&=haga=ha=q_1 \\
(b-a)(h-hah)&=bh-bhah-ah+ahah=bh-bhag-ah+agah \\
&=bh-(agbh)^*=bh-(ah)^*=bh-ah=p_2 \\
(h-hah)(b-a)&=hb-ha-hahb+haha=hb-hahb\\
&=hb-hagb=hb-haga=q_2.
\end{align*}
\end{proof}

The following characterization is typical for many partial orders based on $g$-inverses. The similar result for minus partial order was presented in \cite{Rakic} and for star, sharp, core and dual core partial order in \cite{Star_sharp_core_order_in_ring}.

\begin{theorem}\label{th1511}
Let $a\in R^{(1,3)}$ and $b\in R$. Then $a\*< b$ if and only if there exist self-adjoint idempotent $p$ and idempotent $q$ such that $a=pb=bq$.
\end{theorem}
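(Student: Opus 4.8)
The plan is to prove the two implications separately: the converse is elementary and uses only Definition~\ref{def151}, while the forward direction is best handled by first invoking Theorem~\ref{th155} to pass from $\*<$ to the $<^{-\dag}$ description, which supplies the idempotents explicitly.

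I would begin with the converse, since it is the easier half and does not even require $a\in R^{(1,3)}$. Assume there are a self-adjoint idempotent $p$ and an idempotent $q$ with $a=pb=bq$. From $a=bq$ I immediately get $aR=bqR\subseteq bR$. From $a=pb$ together with $p^*=p=p^2$ I get $a^*=(pb)^*=b^*p$, and therefore $a^*a=(b^*p)(pb)=b^*p^2b=b^*pb$ and $a^*b=(b^*p)b=b^*pb$, so $a^*a=a^*b$. By Definition~\ref{def151} this yields $a\*< b$. Notice that the self-adjointness of $p$ is used precisely to identify $a^*$ with $b^*p$.

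For the forward direction I would apply Theorem~\ref{th155} to rewrite $a\*< b$ as $a<^{-\dag}b$, producing a fixed $g\in a\{1,2,3\}$ with $ag=bg$ and $ga=gb$. The natural candidates are $p=ag$ and $q=ga$. Condition $(3)$ of $g\in a\{1,2,3\}$ gives $(ag)^*=ag$, so $p$ is self-adjoint; moreover $p^2=agag=a(gag)=ag=p$ and $q^2=gaga=(gag)a=ga=q$, so both are idempotent. Finally, using $gb=ga$ and $aga=a$ I compute $pb=(ag)b=a(gb)=a(ga)=aga=a$, and using $bg=ag$ I compute $bq=b(ga)=(bg)a=(ag)a=aga=a$. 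Thus $a=pb=bq$ with $p$ a self-adjoint idempotent and $q$ an idempotent, as required.

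In truth there is no serious obstacle once the right objects are chosen: the whole proof reduces to the substitutions $p=ag$, $q=ga$ in the forward direction and $a^*=b^*p$ in the converse. The only genuinely structural point to flag is the asymmetry between $p$ and $q$: $p$ comes out self-adjoint while $q$ is merely idempotent, and this is exactly the footprint of the one-sided left-star order, inherited from the fact that the relevant inverse satisfies condition $(3)$ but not $(4)$. I would also note that the standing hypothesis $a\in R^{(1,3)}$ is needed only for the forward direction, where it is what allows Theorem~\ref{th155} to furnish the $\{1,2,3\}$-inverse $g$.
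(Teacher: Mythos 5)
Your proof is correct and follows essentially the same route as the paper: for the forward direction the paper likewise passes to a $g\in a\{1,2,3\}$ with $ag=bg$, $ga=gb$ (via Theorem~\ref{th155}) and takes $p=ag$, $q=ga$, and for the converse it makes the same elementary computation showing $a^*a=a^*b$ and $aR\subseteq bR$. No gaps; your remark that the converse needs neither $a\in R^{(1,3)}$ nor the idempotency of $q$ beyond $a=bq$ is a fair observation consistent with the paper's argument.
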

\begin{proof}
If $a\*< b$ then $ag=bg$ and $ga=gb$ for some $g\in a\{1,2,3\}$. If $p=ag$ and $q=ga$ then $p$ is self-adjoint idempotent, $q$ is idempotent and $a=aga=agb=pb$ and $a=bga=qa$. If $a=pb=bq$ where $p$ is self-adjoint idempotent and $q$ is idempotent then $a=pa$, $aR\subseteq bR$ and $a^*b=(pa)^*b=a^*pb=a^*a$.
\end{proof}

In the study of partial orders based on generalized inverses, it is usual to examine its relationships with inclusions of appropriate subsets of $g$-inverses. This is already done for the minus, star, sharp, core and dual core partial orders in \cite{Rakic} and \cite{Star_sharp_core_order_in_ring}. The next two theorems show that the left-star partial order is not exception in this respect.

\begin{theorem}\label{th156}
Let $a,b\in R^{(1,3)}$ and suppose that $a\*< b$. Then
\begin{enumerate}[(i)]
\item The pair $(a,b)$ satisfies the (T)-condition, that is,
$$hah\in a\{1,2,3\}, \; \forall h\in b\{1,2,3\}.$$

\item $b\{1,3\}\subseteq a\{1,3\}$.

\end{enumerate}
\end{theorem}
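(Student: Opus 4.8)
The plan is to establish part (ii) first and then obtain part (i) as a short corollary, since the inclusion $b\{1,3\}\subseteq a\{1,3\}$ is the substantive statement. The whole argument rests on producing a single self-adjoint idempotent that is simultaneously a left factor of $a$ and comes from $a$'s own $\{1,3\}$-inverse. Concretely, I would fix some $a^{(1,3)}\in a\{1,3\}$ and set $e=aa^{(1,3)}$; then $e=e^*=e^2$ and $ea=a$. The first key move is to convert the order condition $a^*a=a^*b$ into a purely multiplicative identity: multiplying on the left by $(a^{(1,3)})^*$ and using $(a^{(1,3)})^*a^*=(aa^{(1,3)})^*=e$ yields $ea=eb$, whence $a=eb$. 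Together with $aR\subseteq bR$ (equivalently $a=bc$ for some $c\in R$), this is all the information from $a\*< b$ that I need.

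For (ii) I take an arbitrary $h\in b\{1,3\}$, so that $bhb=b$ and $(bh)^*=bh$ (equivalently $h^*b^*=bh$), and verify the two defining conditions of $a\{1,3\}$ through a short chain of identities. First $bha=bh(bc)=(bhb)c=bc=a$; then $aha=(eb)ha=e(bha)=ea=a$, giving $aha=a$; next $bh\,e=bh(aa^{(1,3)})=(bha)a^{(1,3)}=aa^{(1,3)}=e$; and finally, using $a=eb$ together with $h^*b^*=bh$, one computes $(ah)^*=h^*a^*=h^*b^*e=(bh)e=e$, so that $ah=e^*=e$ is self-adjoint. Thus $aha=a$ and $(ah)^*=ah$, i.e. $h\in a\{1,3\}$, which proves $b\{1,3\}\subseteq a\{1,3\}$.

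Part (i) then follows immediately: any $h\in b\{1,2,3\}$ lies in $b\{1,3\}$, hence in $a\{1,3\}$ by (ii), and Remark \ref{rem151} (if $g\in a\{1,3\}$ then $gag\in a\{1,2,3\}$) gives $hah\in a\{1,2,3\}$, which is exactly the (T)-condition for $(a,b)$. The step I expect to be the real obstacle is the self-adjointness of $ah$; every other manipulation is formal. The device that unlocks it is the choice $e=aa^{(1,3)}$, which plays two roles at once: because $e$ factors through $a$ on the left, the identity $bh\,e=e$ follows from $bha=a$, while because $e$ is self-adjoint and $a=eb$, the adjoint $(ah)^*$ collapses onto $e$ via $h^*b^*=bh$. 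An alternative route would be to read (ii) off the simultaneous decomposition in Theorem \ref{th158}, writing a general element of $b\{1,3\}$ in $p\times q$ form and checking the $a\{1,3\}$ equations block by block; I would prefer the factorization argument above, as it is shorter and avoids the bookkeeping of the three-by-three decomposition.
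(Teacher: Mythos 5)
Your proof is correct, but it takes a genuinely different route from the paper's. The paper does not work from the raw definition of $\*<$; instead it first invokes Theorem \ref{th155} to translate $a\*< b$ into the existence of $g\in a\{1,2,3\}$ with $ag=bg$ and $ga=gb$, and then proves (ii) by the adjoint chain $ah=agah=agbh=(bhag)^*=(bhbg)^*=(bg)^*=(ag)^*=ag$, $aha=aga=a$; for the relationship between (i) and (ii) it cites semi-completeness of the $g$-map together with Corollary 3.6 of \cite{Jedinstvena_teorija}, which gives the full equivalence of the two conditions. You instead stay entirely inside the definition $a^*a=a^*b$, $a=bc$: the idempotent $e=aa^{(1,3)}$ yields $a=eb$, and then the identities $bha=a$, $aha=a$, $bhe=e$ and $(ah)^*=h^*b^*e=(bh)e=e$ give $h\in a\{1,3\}$ directly, after which (i) follows from (ii) via Remark \ref{rem151}. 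Note that your self-adjoint idempotent $e=aa^{(1,3)}$ plays exactly the role that $ag$ plays in the paper's computation, so the two arguments share the same underlying mechanism, but yours buys self-containment: it bypasses Theorem \ref{th155} (whose proof needs the block-matrix machinery) and the external unified-theory corollary, and it proves only the implication (ii)$\Rightarrow$(i), which is all the theorem as stated requires. What the paper's route buys in exchange is brevity given the machinery already in place, plus the stronger structural fact that (i) and (ii) are actually equivalent for the pair $(a,b)$, which your argument does not address (and does not need to).
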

\begin{proof}
Suppose that $a\*< b$. Since the $g$-map $\mathcal{G}(a)=a\{1,2,3\}$ is semi-complete and since $b\{1,3\}$ is the completion of $b\{1,2,3\}$, it follows by Corollary 3.6 in \cite{Jedinstvena_teorija} that the conditions (i) and (ii) are equivalent. Therefore, it is enough to prove only the inclusion (ii). There exists $g\in a\{1,2,3\}$ such that $ag=bg$ and $ga=gb$. Let $h\in b\{1,3\}$. Then we have
\begin{align*}
ah&=agah=agbh=(bhag)^*=(bhbg)^*=(bg)^*=(ag)^*=ag \\
aha&=aga=a,
\end{align*}
so $h\in a\{1,3\}$.
\end{proof}

The following observation follows by (\ref{eq1510}) and Theorem \ref{th156}. Suppose that $a,b\in R^{(1,3)}$ and $a\*< b$. Then for every $h\in b\{1,2,3\}$
$$h(b-a)h=h-hah=(b-a)^-_{p_2,q_2}\in (b-a)\{1,2,3\}.$$
In particular, we conclude that $b-a\in R^{(1,3)}$.

From the unified theory of $g$-based partial orders it follows that any of the conditions (i) or (ii) from previous Theorem \ref{th156} is sufficient to conclude that $\*<$ is a partial order relation on $R^{(1,3)}$, see Corollary 3.6 in \cite{Jedinstvena_teorija}.
For the reader's convenience we will postpone this conclusion because it directly follows by the following important result.

Theorem \ref{th157} is originally proved by Baksalary and Mitra in \cite{BaksalaryMitra}. Another proof of the same result can be found in Theorem 6.5.17 in \cite{knjiga}. Our proof is in the spirit of that proof. The starting point of the proof in \cite{knjiga} is the singular value decomposition of the matrix $B$. We cannot use the analogy in our case, because we do not suppose the existence of the Moore-Penrose inverse of $b$. This is one among the reasons that we cannot "imitate" that proof.

\begin{theorem}\label{th157}
Let $a,b\in R^{(1,3)}$. Then
$$a\*< b \; \Leftrightarrow \; b\{1,3\}\subseteq a\{1,3\}.$$
\end{theorem}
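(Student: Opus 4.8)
The forward implication is already in hand: it is precisely part (ii) of Theorem \ref{th156}. So the entire content is the converse, and the plan is to assume $b\{1,3\}\subseteq a\{1,3\}$ and manufacture the data demanded by the characterization in Theorem \ref{th1511}, namely a self-adjoint idempotent $p'$ and an idempotent $q'$ with $a=p'b=bq'$.

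First I would fix an $h\in b\{1,2,3\}$, which exists since $b\in R^{(1,3)}=R^{(1,2,3)}$. Because $h\in b\{1,3\}\subseteq a\{1,3\}$, I immediately obtain $aha=a$ and $(ah)^*=ah$; hence $P:=ah$ is a self-adjoint idempotent with $Pa=a$. The idea is then to feed a second, well-chosen member of $b\{1,3\}$ into the inclusion. I would verify that $x:=h+(1-hb)$ again lies in $b\{1,3\}$: indeed $bx=bh+b-bhb=bh$, so $bxb=b$ and $(bx)^*=(bh)^*=bx$. Consequently $x\in a\{1,3\}$, and writing out $axa=a$ and $(ax)^*=ax$ while using $aha=a$ and $(ah)^*=ah$ leaves exactly the two identities $a(1-hb)a=0$ and $a(1-hb)=(a(1-hb))^*$.

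The heart of the argument is to deduce that $n:=a(1-hb)=a-ahb$ vanishes. With $P=ah$, the relation $Pa=a$ gives $Pn=ah\,a(1-hb)=a(1-hb)=n$, while $na=a(1-hb)a=0$ gives $nP=a(1-hb)\,ah=(a(1-hb)a)h=0$. Since $n=n^*$ and $P=P^*$, this forces the clean collapse $n=n^*=(Pn)^*=n^*P=nP=0$, so $a=ahb=Pb$. For the other factorization I would either pass to the matrix representation of $a$ with respect to $1=bh+(1-bh)$ and $1=hb+(1-hb)$ — where $(ah)^*=ah$ kills the lower-left block and $aha=a$ kills the lower-right block, so $(1-bh)a=0$, i.e. $bha=a$ — or derive $bha=a$ directly from $a=ahb$ by taking adjoints, using $(ah)^*=ah$, $(bh)^*=bh$ and $hbh=h$. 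Either way $a=bha=bQ$ with $Q:=ha$ idempotent. Now $a=Pb=bQ$ with $P$ self-adjoint idempotent and $Q$ idempotent, and Theorem \ref{th1511} delivers $a\*< b$.

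The step I expect to be the genuine obstacle is exactly $n=a(1-hb)=0$. In the complex-matrix proof one invokes the singular value decomposition of $b$, which is unavailable here; moreover a naive block comparison of $a^*a$ with $a^*b$ only produces relations such as $a_2^*a_2=0$, which would eliminate the off-diagonal block only under a proper involution. The whole point of the proposed argument is to avoid properness entirely by exploiting the self-adjoint idempotent $ah$ via the sandwich $n=(Pn)^*=nP=0$. A secondary point of care is that $a=ahb$ only exhibits $a\in Rb$; securing the range inclusion $aR\subseteq bR$ (equivalently $bha=a$) requires the separate adjoint computation or block argument indicated above.
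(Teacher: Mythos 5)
Your proof is correct. The forward direction is, as you say, exactly Theorem \ref{th156}(ii). In the converse, every step checks out: $x=h+(1-hb)$ does lie in $b\{1,3\}$ (since $bx=bh$), feeding $h$ and $x$ into the inclusion yields $a(1-hb)a=0$ and $n:=a(1-hb)=n^*$, and the sandwich $n=n^*=(Pn)^*=n^*P=nP=0$ is valid because $P=ah$ is a self-adjoint idempotent with $Pn=n$ (from $aha=a$) and $nP=(na)h=0$. Both of your routes to $bha=a$ also work; the adjoint one is cleanest: $(bh)(ah)=\bigl((ah)(bh)\bigr)^*=\bigl(a(hbh)\bigr)^*=(ah)^*=ah$, hence $bha=bh(ahb)=(bhah)b=ahb=a$, and Theorem \ref{th1511} then delivers $a\*< b$.

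Your route is genuinely leaner than the paper's, though the algebraic engine underneath is the same. The paper fixes the same $h$, writes $a$ in block form with respect to $p=bh$, $q=hb$, parametrizes all of $b\{1,3\}$ as $g=h+x_3+x_4$ with $x_3\in(1-q)Rp$, $x_4\in(1-q)R(1-p)$ (the analogue of (\ref{eq152}), which it only sketches), and eliminates the blocks of $a$ one at a time; the stubborn off-diagonal block $a_2$ dies via $a_2x_3=a_1ha_2x_3=(a_2x_3a_1h)^*=0$ followed by the specific choice $x_3=(1-q)p$, $x_4=(1-q)(1-p)$, after which Definition \ref{def151} is verified directly. Note that your two test inverses are precisely the paper's: your $x=h+(1-hb)=h+(1-q)$ is the parametrized element with that same specific $x_3$ and $x_4$, and your sandwich is the identical self-adjointness trick the paper runs at block level on $m=a_2x_3$ (self-adjoint, fixed by the self-adjoint idempotent $a_1h$ on the left, annihilated by it on the right). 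What your packaging buys: no parametrization lemma for $b\{1,3\}$, no block bookkeeping, and a transparent reason why properness of the involution is never needed --- the point you correctly identify as the crux. The only structural difference is that you conclude through the characterization $a=pb=bq$ of Theorem \ref{th1511} rather than from the definition of $\*<$; that is perfectly legitimate, since the relevant direction of Theorem \ref{th1511} uses nothing beyond the definition.
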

\begin{proof}
We have proved the "if" part in Theorem \ref{th156}. Suppose that $b\{1,3\}\subseteq a\{1,3\}$. Fix an $h\in b\{1,2,3\}$ and let $p=bh=(bh)^*$ and $q=hb$. Suppose that $a=\bmatrix a_1 & a_2 \\ a_3 & a_4 \endbmatrix_{p\times q}$. Like in Theorem \ref{th154}, we can show that $g\in b\{1,3\}$ if and only if
$$g=\bmatrix h & 0 \\ x_3 & x_4 \endbmatrix_{q\times p}$$
for some $x_3\in (1-q)Rp$ and $x_4\in (1-q)R(1-p)$. Since $b\{1,3\}\subseteq a\{1,3\}$, we have that $aga=a$ and $(ag)^*=ag$ for every $x_3$ and $x_4$. If we take $x_3=0$ and $x_4=0$ then the condition $(ag)^*=ag$ gives $(a_1h)^*=a_1h$ and $a_3h=0$. Thus, $a_3=a_3q=0$. The condition $aga=a$ gives
$$\bmatrix a_1ha_1 & a_1ha_2 \\ 0 & 0 \endbmatrix_{p\times q}=\bmatrix a_1 & a_2 \\ 0 & a_4 \endbmatrix_{p\times q}.$$
Therefore,
\begin{equation}\label{eq156}
a_4=0, \; a_1ha_1=a_1 \; \text{ and } \; a_1ha_2=a_2.
\end{equation}
For arbitrary $g\in b\{1,3\}$ the conditions $ag=(ag)^*$ and $aga=a$ provide
$$\bmatrix a_1h+a_2x_3 & a_2x_4 \\ 0 & 0 \endbmatrix_{p\times p}=\bmatrix a_1h+(a_2x_3)^* & 0 \\ (a_2x_4)^* & 0 \endbmatrix_{p\times p}$$
and
$$\bmatrix a_1ha_1+a_2x_3a_1 & a_1ha_2+a_2x_3a_2 \\ 0 & 0 \endbmatrix_{p\times q}=\bmatrix a_1 & a_2 \\ 0 & 0 \endbmatrix_{p\times q}.$$
Therefore,
$$a_2x_4=0,\; (a_2x_3)^*=a_2x_3, \; a_2x_3a_1=0, \; a_2x_3a_2=0$$
for every $x_3\in (1-q)Rp$ and for every $x_4\in (1-q)R(1-p)$.
It follows that
$$a_2x_3=a_1ha_2x_3=(a_2x_3a_1h)^*=0,$$
Take $x_3=(1-q)p$ and $x_4=(1-q)(1-p)$ to conclude that
$$a_2=a_2(1-q)=a_2(1-q)p+a_2(1-q)(1-p)=a_2x_3+a_2x_4=0.$$
We have proved that
$$a=\bmatrix a_1 & 0 \\ 0 & 0 \endbmatrix_{p\times q},$$
so $a=a_1\in pRq$ and $aha=a$, $(ah)^*=ah$. Hence, $a=pa=bha$, so $aR\subseteq bR$. Also,
$$a^*b=(aha)^*b=a^*ahb=a^*aq=a^*a.$$
It follows that $a\*< b$ and the proof is complete.
\end{proof}

\begin{corollary}
The relation $\*<$, that is, the relation $<^{-\dag}$, is a partial order relation on $R^{(1,3)}$. 
\end{corollary}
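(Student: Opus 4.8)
The plan is to establish the three defining axioms of a partial order relation --- reflexivity, antisymmetry and transitivity --- on the set $R^{(1,3)}$, leaning on the characterization in Theorem \ref{th157} to dispose of two of them almost for free. The key observation is that Theorem \ref{th157} rewrites $a\*< b$ (for $a,b\in R^{(1,3)}$) as the reverse inclusion $b\{1,3\}\subseteq a\{1,3\}$, and a relation defined by set inclusion is automatically reflexive and transitive; only antisymmetry requires a genuine argument.

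For reflexivity I would note that $a\{1,3\}\subseteq a\{1,3\}$ holds trivially for every $a\in R^{(1,3)}$, so $a\*< a$ by Theorem \ref{th157}. (Equivalently, any $g\in a\{1,2,3\}$, which exists because $R^{(1,3)}=R^{(1,2,3)}$ by Remark \ref{rem151}, witnesses $a<^{-\dag}a$ directly from the definition.) For transitivity, take $a,b,c\in R^{(1,3)}$ with $a\*< b$ and $b\*< c$. Theorem \ref{th157} yields $b\{1,3\}\subseteq a\{1,3\}$ and $c\{1,3\}\subseteq b\{1,3\}$; chaining these inclusions gives $c\{1,3\}\subseteq a\{1,3\}$, whence $a\*< c$ by the same theorem applied to $a,c\in R^{(1,3)}$.

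The only real content is antisymmetry, and here I expect the main obstacle. The naive route through Theorem \ref{th157} stalls: from $a\*< b$ and $b\*< a$ one gets $a\{1,3\}=b\{1,3\}$, but deducing $a=b$ from equality of the inverse sets is not immediate. A second tempting route is to derive $(a-b)^*(a-b)=0$ from $a^*a=a^*b$ and $b^*b=b^*a$, but then concluding $a=b$ forces the involution to be proper, which is not assumed in this paper. The clean way around both difficulties is to pass to the minus partial order: by the Proposition preceding Definition \ref{def151}, together with Theorem \ref{th155}, we have $a\*< b\Rightarrow a<^- b$, and $<^-$ is a partial order on $R^{(1)}\supseteq R^{(1,3)}$ by \cite[Theorem 1]{How_to_partialy_order}. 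Hence $a\*< b$ and $b\*< a$ give $a<^- b$ and $b<^- a$, and antisymmetry of $<^-$ forces $a=b$. Combining the three properties shows that $\*<$ (equivalently $<^{-\dag}$) is a partial order on $R^{(1,3)}$. The payoff of having already reduced $\*<$ to the minus order is precisely that it settles antisymmetry while bypassing any properness hypothesis on the involution.
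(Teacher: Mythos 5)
Your proposal is correct and takes essentially the same route as the paper: the paper also obtains transitivity from Theorem \ref{th157} (inclusion of the sets of $\{1,3\}$-inverses is transitive), and it had already secured reflexivity and antisymmetry earlier, exactly as you do, via the implication $a<^{-\dag}b \Rightarrow a<^{-}b$ and Hartwig's theorem that the minus order is a partial order on $R^{(1)}$. Your observation that this detour through $<^{-}$ is what avoids assuming a proper involution is a fair account of why the paper's (and your) argument works.
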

\begin{proof}
We have already establish that the relations $\*<$ is reflexive and antisymmetric. The transitivity of $\*<$ follows by Theorem \ref{th157}, since the subset relation is transitive.
\end{proof}

\section{MP1-inverse, $<^{\dag -}$ order and the right-star order}

In this section we will present dual results concerning the MP1-inverse, the relation $<^{-\dag}$ and the right-star order. All results can be proved in a very similar (analogous) way as its duals. Because of that, we will omit the proofs.

Similarly as in the case of 1MP-inverse, the set of MP1-inverses can be introduced as the set of all canonical representatives of the the quotient space $a\{1\}/\sim_r$, where the relation $\sim_r$ is defined on $a\{1\}$ by $a^-\sim_r a^=$ if $aa^-=aa^=$, $a^-,a^=\in a\{1\}$. It turns out that we arrive to the following definition.

\begin{definition}
Let $a\in R^\dag$ and choose $a^-\in a\{1\}$. The element
$$a^{\dag -}=a^\dag aa^-$$
is called a MP1-inverse of $a$. The set of all MP1-inverses of $a$ is denoted by $a\{\dag -\}$.
\end{definition}

Note that $R^{(1,2,4)}=R^{(1,4)}$.

\begin{theorem}
Let $a\in R^{(1,4)}$ and let $p=ah$, $q=ha$ where $h$ is fixed $\{1,2,4\}$-inverse of $a$. Then we have
\begin{align*}
a\{1,2,4\}&= a\{1,4\}aa\{1\}  \\
&=\left\{\bmatrix h & u \\ 0 & 0 \endbmatrix_{q\times p}: u\in qR(1-p)\right\} \\
&=\{h+haw(1-h) : w\in R\} \\
&=\{a^*(aa^*)^-:(aa^*)^-\in (aa^*)\{1\}\}.
\end{align*}
If $a\in R^\dag$ then
$$a\{\dag -\}=a\{1,2,4\}=\{x: xax=x, \; xa=a^\dag a\}.$$
\end{theorem}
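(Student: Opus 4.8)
The plan is to obtain the whole statement from its $\{1,2,3\}$-analogue, Theorem~\ref{th154}, by applying the involution, exactly as the duality between conditions (3) and (4) suggests. The pivot is the equivalence
\[
x\in a\{1,2,4\}\quad\Longleftrightarrow\quad x^*\in a^*\{1,2,3\},
\]
which I would verify first. Starring $axa=a$ and $xax=x$ shows that conditions (1),(2) for the pair $(a,x)$ are equivalent to (1),(2) for $(a^*,x^*)$; and condition (4), i.e. $a^*x^*=xa$, is literally the equation $(a^*x^*)^*=a^*x^*$, which is condition (3) for $a^*$ with inverse $x^*$. The identical computation yields $y\in a^*\{1,3\}\Leftrightarrow y^*\in a\{1,4\}$ and $y\in a^*\{1\}\Leftrightarrow y^*\in a\{1\}$. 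Since $a\in R^{(1,4)}$ gives $a^*\in R^{(1,3)}$, Theorem~\ref{th154} applies to $a^*$ with the fixed inverse $h^*\in a^*\{1,2,3\}$; for this data the associated idempotents are $a^*h^*=(ha)^*=q$ (self-adjoint, because $h\in a\{1,4\}$) and $h^*a^*=(ah)^*=p^*$, so $q$ and $p^*$ take over the roles of ``$p$'' and ``$q$'' from Theorem~\ref{th154}.

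Each of the four descriptions is then transported by taking adjoints, keeping in mind that $*$ reverses products and, by (\ref{eq15matr_rep_za_x_zvezda}), transposes a matrix representation while starring the underlying decompositions of the identity. Starring (\ref{eq151}) written for $a^*$, namely $a^*\{1,2,3\}=a^*\{1\}\,a^*\,a^*\{1,3\}$, and reversing the order gives $a\{1,4\}\,a\,a\{1\}$. Starring the block description (\ref{eq152}) for $a^*$ transposes it into
\[
\bmatrix h & u \\ 0 & 0 \endbmatrix_{q\times p},\qquad u\in qR(1-p),
\]
since the off-diagonal datum $u'\in(1-q')Rp'$ becomes $u=u'^*\in qR(1-p)$; this block is simply the matrix record of the adjoint of (\ref{eq153}) for $a^*$, that is, of $\{h+ha\,w\,(1-ah):w\in R\}$. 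Finally, (\ref{eq155}) for $a^*$ reads $a^*\{1,2,3\}=\{(aa^*)^-a:(aa^*)^-\in(aa^*)\{1\}\}$ because $(a^*)^*a^*=aa^*$; taking adjoints gives $\{a^*((aa^*)^-)^*:\cdots\}$, and as $z\mapsto z^*$ permutes $(aa^*)\{1\}$ (here $(aa^*)^*=aa^*$), this is exactly $\{a^*(aa^*)^-:(aa^*)^-\in(aa^*)\{1\}\}$.

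For the last two equalities assume $a\in R^\dag$. Then $a^*\in R^\dag$ with $(a^*)^\dag=(a^\dag)^*$ (immediate from the four defining equations), so Theorem~\ref{th153} gives $a^*\{-\dag\}=a^*\{1,2,3\}$; starring, and using that $z\mapsto z^*$ carries $a^*\{1\}$ onto $a\{1\}$ and sends the $1$MP-representative of $a^*$ to the MP$1$-representative $a^\dag aa^-$ of $a$, yields $a\{\dag-\}=a\{1,2,4\}$. The system characterization is likewise the adjoint of Theorem~\ref{th153}(ii): the equations $x^*a^*x^*=x^*$ and $a^*x^*=a^*(a^*)^\dag=a^*(a^\dag)^*$ star to $xax=x$ and $xa=a^\dag a$.

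The argument is essentially bookkeeping, so there is no deep conceptual obstacle; the delicate point is the pivot equivalence itself, the verification that condition (4) for $(a,x)$ is literally condition (3) for $(a^*,x^*)$, on which everything downstream rests. The second source of errors is the transport of the structured descriptions: one must respect the order reversal in $a\{1\}\,a\,a\{1,3\}\mapsto a\{1,4\}\,a\,a\{1\}$, remember that in the MP$1$ setting it is $q$ (not $p$) that is self-adjoint, and check through (\ref{eq15matr_rep_za_x_zvezda}) that $u'\in(1-q')Rp'$ transposes to precisely $u\in qR(1-p)$.
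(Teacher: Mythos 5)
Your overall strategy---transporting Theorem \ref{th154} and Theorem \ref{th153} through the involution via the pivot $x\in a\{1,2,4\}\Leftrightarrow x^*\in a^*\{1,2,3\}$---is exactly what the paper intends: Section 5 omits all proofs with the remark that they are analogous to their duals, and the paper itself records your pivot equivalence as (\ref{eq157}). Your verification of the pivot, the identification of the idempotents ($a^*h^*=(ha)^*=q$ self-adjoint, $h^*a^*=(ah)^*=p^*$), the order reversal giving $a\{1,4\}\,a\,a\{1\}$, the transposition of the block form into $u\in qR(1-p)$, the treatment of (\ref{eq155}), and the final identification of $a\{\dag-\}$ are all correct.

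There is, however, one genuine mismatch that you pass over in silence: your dualization of (\ref{eq153}) produces
$$a\{1,2,4\}=\{h+haw(1-ah):w\in R\},$$
whereas the theorem you were asked to prove asserts $a\{1,2,4\}=\{h+haw(1-h):w\in R\}$. These are not the same set in general, and the printed version is false; yours is the correct one. Indeed, take $R=\mathbb{C}^{2\times 2}$ with conjugate transpose and
$$a=\bmatrix 1 & 1 \\ 0 & 0 \endbmatrix,\qquad h=\bmatrix 1/2 & 1 \\ 1/2 & 1 \endbmatrix\in a\{1,2,4\},\qquad w=\bmatrix 2 & 0 \\ 0 & 0 \endbmatrix.$$
Then $h+haw(1-h)=\bmatrix 1 & 0 \\ 1 & 0 \endbmatrix=:x$ and $axa=2a\neq a$, so $x\notin a\{1\}$; hence $\{h+haw(1-h):w\in R\}\not\subseteq a\{1,2,4\}$. (The culprit: membership in $qR(1-p)$ requires killing $ah$ on the right, and $(1-ah)ah=0$ always, while $(1-h)ah=ah-h$ need not be annihilated by $haR$.) So what you have written is a proof of a corrected statement, not of the literal one, and a proof of the literal one must fail. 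You should flag this explicitly as an erratum---the third line of the theorem should read $\{h+haw(1-ah):w\in R\}$, the exact dual of $\{h+(1-ha)wah:w\in R\}$ in (\ref{eq153})---rather than silently substituting the right formula; otherwise a reader comparing your derivation with the stated theorem will conclude that you transported (\ref{eq153}) incorrectly.
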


The following observation is evident
\begin{equation}\label{eq157}
g\in a\{1,2,3\} \; \Leftrightarrow \; g^*\in a^*\{1,2,4\}.
\end{equation}

The definitions of the $<^{-\dag}$ relation and the right-star order are expected.
\begin{definition}
For $a,b\in R$ we say that $a$ is lower then or equal to $b$ with respect to $<^{\dag -}$, which is denoted by $a<^{\dag -}b$ if there exists $g\in a\{1,2,4\}$ such that
$$ag=bg \text{ and } ga=gb.$$
\end{definition}

\begin{proposition}
Let $a,b\in R$. Then
$$a<^{\dag -} b \; \Leftrightarrow \; ag=bg \text{ and } ga=gb \text{ for some } g\in a\{1,4\}.$$
Also,
$$a<^{\dag -}b \; \Rightarrow \; a<^-b.$$
\end{proposition}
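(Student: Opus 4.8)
The plan is to mirror, almost verbatim, the short argument the authors gave immediately before the $<^{-\dag}$ proposition, interchanging the roles of the Moore--Penrose conditions $(3)$ and $(4)$ in accordance with the symmetry $g\in a\{1,2,3\}\Leftrightarrow g^*\in a^*\{1,2,4\}$ recorded in $(\ref{eq157})$. For the stated equivalence, the implication from left to right is immediate: if $a<^{\dag -}b$, then by definition there is $g\in a\{1,2,4\}\subseteq a\{1,4\}$ with $ag=bg$ and $ga=gb$, which is exactly the condition on the right-hand side.

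For the converse, I would start from a $g\in a\{1,4\}$ satisfying $ag=bg$ and $ga=gb$, and set $h:=gag$. The first step is to verify $h\in a\{1,2,4\}$. Since $aga=a$, the element $ga$ is idempotent, as $(ga)(ga)=g(aga)=ga$, and it is self-adjoint because $g\in a\{1,4\}$. Consequently $ha=gaga=ga$ is self-adjoint, which is condition $(4)$ for $h$; condition $(1)$ follows from $aha=agaga=aga=a$; and condition $(2)$ from $hah=(gag)a(gag)=g(aga)gag=gagag=gag=h$. The second step is to transfer the two defining equalities from $g$ to $h$: using $bg=ag$ I get $ah=a(gag)=(ag)(ag)=bh$, and using $gb=ga$ together with the idempotency of $ga$ I get $hb=ga(gb)=ga(ga)=ga=ha$. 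This produces an $h\in a\{1,2,4\}$ with $ah=bh$ and $ha=hb$, i.e. $a<^{\dag -}b$, completing the equivalence.

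The final implication $a<^{\dag -}b\Rightarrow a<^-b$ is then immediate: any $g\in a\{1,2,4\}$ witnessing $a<^{\dag -}b$ already lies in $a\{1\}$ and satisfies $ag=bg$ and $ga=gb$, so taking $a^-=g$ in the definition of $<^-$ gives $a<^-b$.

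I expect no genuine obstacle here, since the statement is the formal dual of the $<^{-\dag}$ proposition already established. The only point requiring care is bookkeeping: one must track that it is now the product $ga$ (rather than $ag$) that is a self-adjoint idempotent, so that the relevant Moore--Penrose equation verified for $h=gag$ is $(4)$ rather than $(3)$. This is precisely the shift encoded by $g\mapsto g^*$, $a\mapsto a^*$ in $(\ref{eq157})$, and it is the sole place where the $\{1,4\}$ hypothesis (as opposed to $\{1,3\}$) is used.
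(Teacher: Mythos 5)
Your proposal is correct and takes essentially the same approach as the paper: the paper omits the proof of this proposition as the formal dual of the $<^{-\dag}$ case, whose proof is precisely your construction $h:=gag$ (there with $g\in a\{1,3\}$, yielding $h\in a\{1,2,3\}$), dualized by swapping Moore--Penrose conditions $(3)$ and $(4)$. All of your verifications --- $h\in a\{1,2,4\}$, $ah=bh$, $ha=hb$, and the trivial implication to $a<^-b$ --- are accurate.
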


\begin{definition}
We say that $a\<* b$ if
$$aa^*=ba^*\; \text{ and } Ra\subseteq Rb.$$
\end{definition}

The relations $<^{\dag -}$ and $\<*$ coincides on $R^{(1,4)}$.

\begin{theorem}\label{th1510}
Let $a\in R^{(1,4)}$ and $b\in R$. Then
$$a<^{\dag -}b \; \Leftrightarrow \; a\<* b.$$
\end{theorem}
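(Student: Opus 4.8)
The plan is to dualize everything through the involution and reduce to the already-established Theorem \ref{th155}. The key tool is the observation (\ref{eq157}); replacing $a$ by $a^*$ and $g$ by $g^*$ there yields the symmetric counterpart $g\in a\{1,2,4\}\Leftrightarrow g^*\in a^*\{1,2,3\}$ and, in particular, $a\in R^{(1,4)}\Leftrightarrow a^*\in R^{(1,3)}$. Hence whenever $a\in R^{(1,4)}$ we may legitimately apply Theorem \ref{th155} to the pair $(a^*,b^*)$.

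First I would translate the relation $<^{\dag -}$ into $<^{-\dag}$ under the involution. Starting from $a<^{\dag -}b$, pick $g\in a\{1,2,4\}$ with $ag=bg$ and $ga=gb$. Applying $*$ gives $g^*\in a^*\{1,2,3\}$ together with $g^*a^*=g^*b^*$ and $a^*g^*=b^*g^*$, which is exactly $a^*<^{-\dag}b^*$; the converse is identical since $*$ is an involution. Thus $a<^{\dag -}b\Leftrightarrow a^*<^{-\dag}b^*$. By Theorem \ref{th155} applied to $a^*\in R^{(1,3)}$, the latter is equivalent to $a^*\*< b^*$.

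It remains to check $a^*\*< b^*\Leftrightarrow a\<* b$, which is pure bookkeeping on the defining conditions. By Definition \ref{def151}, $a^*\*< b^*$ means $aa^*=ab^*$ and $a^*R\subseteq b^*R$. Taking $*$ of the first equation and using $(aa^*)^*=aa^*$ together with $(ab^*)^*=ba^*$ shows $aa^*=ab^*\Leftrightarrow aa^*=ba^*$. For the inclusion, since $*$ is a bijective anti-automorphism, $(a^*R)^*=Ra$ and $(b^*R)^*=Rb$, so $a^*R\subseteq b^*R\Leftrightarrow Ra\subseteq Rb$. Comparing with the definition of $\<*$, we conclude $a^*\*< b^*\Leftrightarrow a\<* b$. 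Chaining the three equivalences gives $a<^{\dag -}b\Leftrightarrow a\<* b$.

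The steps are all routine once the involution correspondence is in place; the only point demanding care is the direction-reversal of one-sided ideals and conjugate transposes, i.e. making sure that $a^*R\subseteq b^*R$ for the starred elements becomes $Ra\subseteq Rb$ for the original ones, and that the two forms $aa^*=ab^*$ and $aa^*=ba^*$ really coincide. I do not expect any genuine obstacle beyond this bookkeeping, precisely because Theorem \ref{th155} already carries all of the structural content and the involution merely transports it.
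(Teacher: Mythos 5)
Your proof is correct. A point worth noting: the paper itself does not write out a proof of this theorem at all --- Section 5 opens by declaring that all the dual results ``can be proved in a very similar (analogous) way'' to their Section 4 counterparts, meaning the intended argument is to re-run the proof of Theorem \ref{th155} with the matrix decompositions mirrored (rows and columns, left and right ideals swapped). Your route is different and, in a sense, tighter: instead of repeating the mirrored argument, you transport the statement through the involution, using $g\in a\{1,2,4\}\Leftrightarrow g^*\in a^*\{1,2,3\}$ (the symmetric form of (\ref{eq157})) to get $a<^{\dag-}b\Leftrightarrow a^*<^{-\dag}b^*$, and then checking directly from Definition \ref{def151} and the definition of $\<*$ that $a^*\*< b^*\Leftrightarrow a\<* b$. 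This makes the duality a theorem-level reduction rather than a proof-level analogy, so Theorem \ref{th155} does all the structural work and nothing is re-proved. One subtlety you handled correctly: the paper derives the conjugation equivalence $a\*< b\Leftrightarrow a^*\<* b^*$ \emph{from} Theorem \ref{th1510} and (\ref{eq157}), so using that stated equivalence would have been circular; you instead prove it directly from the definitions (self-adjointness of $aa^*$, and $(a^*R)^*=Ra$ since the involution is a bijective anti-automorphism), which keeps the logical order sound. The only cost of your approach is that it yields no independent computational insight into the $\{1,2,4\}$ setting (no explicit witness $g$ constructed in terms of $a$, $b$ themselves), whereas the paper's analogous-proof convention implicitly provides the mirrored decomposition machinery that later dual theorems (e.g.\ Theorem \ref{th1513}) also rely on; but as a proof of this particular equivalence, your reduction is complete.
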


It follows from Theorem \ref{th1510} and (\ref{eq157}) that
$$a\*< b \; \Leftrightarrow a^*\<* b^*.$$

From
$$aa^*=ba^* \; \Leftrightarrow \; aa^\dag=ba^\dag$$
and Theorem \ref{th1510} it follows that
\begin{align*}
&a<^{\dag -}b \; \Leftrightarrow \; aa^\dag=ba^\dag \text{ and } Ra\subseteq Rb.
\end{align*}

Like for the $\*<$ order, the semi-completeness given in the next proposition allow us to transfer some results of the unified theory to the $\<*$ order relation.

\begin{proposition}
The $g$-map $\mathcal{G}(a)=a\{1,2,4\}$ is semi-complete and its completion is $a\{1,4\}$.
\end{proposition}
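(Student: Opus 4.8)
The plan is to prove this proposition as the exact dual of the earlier Proposition establishing that $\mathcal{G}(a)=a\{1,2,3\}$ is semi-complete with completion $a\{1,3\}$, with the roles of conditions $(3)$ and $(4)$ interchanged. There are two natural routes: a transfer via the involution, using the equivalence $g\in a\{1,2,3\}\Leftrightarrow g^*\in a^*\{1,2,4\}$ recorded in (\ref{eq157}) together with the anti-multiplicativity of $*$ (which turns $ag=ah$, $ga=ha$ into $g^*a^*=h^*a^*$, $a^*g^*=a^*h^*$, so that the completion of $a\{1,2,4\}$ corresponds under $*$ to the completion of $a^*\{1,2,3\}$, namely $a^*\{1,3\}$, and applying $*$ back yields $a\{1,4\}$); or a self-contained direct argument. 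I would present the direct route.

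First I would establish semi-completeness. Every $h\in a\{1,2,4\}$ is reflexive, i.e. $hah=h$ by equation $(2)$, so trivially $hah\in a\{1,2,4\}=\mathcal{G}(a)$. Hence the pair $(a,a)$ satisfies the (T)-condition for every $a$, which is exactly the definition of semi-completeness. Next I would prove $\tilde{\mathcal{G}}(a)=a\{1,4\}$ by double inclusion. For the inclusion $\tilde{\mathcal{G}}(a)\subseteq a\{1,4\}$: if $g\in\tilde{\mathcal{G}}(a)$ then $ag=ah$ and $ga=ha$ for some $h\in a\{1,2,4\}$, and from $ga=ha$ I read off $aga=aha=a$ (condition $(1)$) and $(ga)^*=(ha)^*=ha=ga$ (condition $(4)$), so $g\in a\{1,4\}$. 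For the reverse inclusion: given $g\in a\{1,4\}$, I would set $h:=gag$, verify $h\in a\{1,2,4\}$, and show $ah=agag=ag$ and $ha=gaga=ga$, which places $g$ in the completion.

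The only place demanding care --- the main obstacle, though a mild one --- is checking that $h=gag$ really satisfies $(1)$, $(2)$ and $(4)$, and that the two reductions $agag=ag$ and $gaga=ga$ hold. All of these follow from repeated use of $aga=a$ together with the self-adjointness $(ga)^*=ga$: one gets $aha=agaga=aga=a$ for $(1)$, $hah=h$ by applying $aga=a$ twice for $(2)$, and $ha=gaga=g(aga)=ga$, whose self-adjointness gives $(4)$ for $h$; the same identity $gaga=ga$ also yields $ha=ga$, while $agag=(aga)g=ag$ yields $ah=ag$. Since everything collapses to the single relation $aga=a$ and the self-adjointness of $ga$, no genuinely new difficulty arises beyond the already-treated $a\{1,2,3\}$ case, confirming the author's remark that the dual results are entirely analogous.
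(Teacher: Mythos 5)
Your proof is correct and follows exactly the route the paper intends: the paper omits this proof as the ``analogous dual'' of its proposition that $\mathcal{G}(a)=a\{1,2,3\}$ is semi-complete with completion $a\{1,3\}$, and your argument is precisely that dual --- semi-completeness from $hah=h$, the inclusion $\tilde{\mathcal{G}}(a)\subseteq a\{1,4\}$ read off from $ga=ha$ (correctly switching to this equality since condition $(4)$ involves $ga$), and the reverse inclusion via $h:=gag$ with $agag=ag$, $gaga=ga$. Nothing further is needed.
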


The next two theorems characterize all elements which are greater then $a\in R^{(1,4)}$ with respect to $\<*$ order.

\begin{theorem}
For $a\in R^{(1,4)}$ we have
  \begin{equation*}
  \{b\in R : a\<* b\}=\{a+(1-ag)d(1-ga) : g\in a\{1,2,4\},\, d\in R\}.
  \end{equation*}
That is, $a\<* b$ if and only if there exists $g\in a\{1,2,4\}$ such that
$$b=\bmatrix a & 0 \\ 0 & v \endbmatrix_{p\times q},$$
for some $v\in (1-p)R(1-q)$, where $p=ag$ and $q=ga$.
\end{theorem}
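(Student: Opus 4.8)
The plan is to mirror the proof of its dual, Theorem \ref{th1514}, replacing every $\{1,2,3\}$-inverse by the corresponding $\{1,2,4\}$-inverse and interchanging the roles of left and right multiplication. By Theorem \ref{th1510} the relation $\<*$ restricted to $R^{(1,4)}$ coincides with $<^{\dag -}$, which is precisely the $\G$-order associated with the $g$-map $\G(a)=a\{1,2,4\}$. Since this $g$-map has just been shown to be semi-complete, with completion $a\{1,4\}$, Theorem 3.2 and Corollary 3.3 of \cite{Jedinstvena_teorija} apply verbatim and deliver the set description $\{b\in R : a\<* b\}=\{a+(1-ag)d(1-ga) : g\in a\{1,2,4\},\, d\in R\}$. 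Thus the first half of the statement is an immediate transcription of the dual argument.

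Next I would translate this set description into the matrix form. Fixing $g\in a\{1,2,4\}$ and setting $p=ag$, $q=ga$, the reflexivity of $g$ gives $p^2=agag=ag=p$ and $q^2=gaga=ga=q$, while $pa=aga=a=aq$ forces $(1-p)a=0=a(1-q)$. Hence $a$ occupies the top-left corner of the $p\times q$ block decomposition, and every element $(1-ag)d(1-ga)=(1-p)d(1-q)$ lies in the bottom-right corner $(1-p)R(1-q)$. Setting $v=(1-p)d(1-q)$ therefore converts the set description into exactly the claimed block form, and conversely any $v\in(1-p)R(1-q)$ is of this shape.

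If a self-contained argument is preferred over quoting the unified theory, the two inclusions can be verified directly, and this is the only place where any computation is needed. For the forward implication one starts from $ag=bg$ and $ga=gb$ (available from $a<^{\dag -}b$) and computes $pb=agb=aga=a=pa$ and $bq=bga=aga=a=aq$, so that $p(b-a)=0$ and $(b-a)q=0$; consequently $b-a=(1-p)(b-a)(1-q)$, i.e. $b=a+(1-ag)(b-a)(1-ga)$. For the converse, beginning from $b=a+(1-p)d(1-q)$, the identities $(1-q)g=g-gag=0$ and $g(1-p)=g-gag=0$ yield $bg=ag$ and $gb=ga$, exhibiting the required $g\in a\{1,2,4\}$ and hence $a<^{\dag -}b$, that is, $a\<* b$. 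I expect no genuine obstacle here: the sole point demanding care is the systematic left/right bookkeeping, whereby the self-adjoint idempotent now appears on the right as $q=ga$ rather than on the left, faithfully reflecting the passage from $a^*a=a^*b$ to $aa^*=ba^*$.
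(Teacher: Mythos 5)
Your proposal is correct and takes essentially the same route as the paper: the paper omits this proof, declaring it dual to Theorem \ref{th1514}, whose proof consists exactly of your opening paragraph --- semi-completeness of the $g$-map $\G(a)=a\{1,2,4\}$, the identification of $\<*$ with $<^{\dag -}$ via Theorem \ref{th1510}, and Theorem 3.2 with Corollary 3.3 of \cite{Jedinstvena_teorija}. Your optional self-contained verification (both inclusions and the block-form translation) is also correct and simply unpacks what those cited results encapsulate, so it is a welcome addition rather than a different method.
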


\begin{theorem}
Let $a\in R^{(1,4)}$, $h\in a\{1,2,4\}$, and $p=ah$, $q=ha$. Then $a\*< b$ if and only if
$$b=\bmatrix a & ub_4 \\ 0 & b_4 \endbmatrix_{p\times q},$$
for some $b_4\in (1-p)R(1-q)$ and $u\in pR$.
\end{theorem}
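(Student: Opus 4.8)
The plan is to obtain this statement as the exact involutive dual of Theorem~\ref{th1515}, rather than to redo the block computation from scratch. Before doing so I would record that the displayed equivalence, as printed, cannot use the left-star order: with $a\in R^{(1,4)}$ and $h\in a\{1,2,4\}$ the idempotent $q=ha=(ha)^*$ is self-adjoint while $p=ah$ is in general \emph{not}, which is precisely the configuration dual to the one ($p=ah=(ah)^*$, $q=ha$) of Theorem~\ref{th1515}. For the left-star reading the condition $a^*a=a^*b$ pins down $p^*b$ instead of the $p$-rows of $b$, and a one-line rank-one matrix check (e.g. $a=\bmatrix 1 & 0 \\ 1 & 0 \endbmatrix$ with $h=\bmatrix 2 & -1 \\ 0 & 0 \endbmatrix$) shows that a $b$ of the displayed block form need not satisfy $a^*a=a^*b$. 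Hence the relation intended here is the right-star order $\<*$, consistent with the subject of this section and with the hypotheses, and what I prove is that $a\<* b$ if and only if $b$ has the displayed form.

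For the proof I would use the two dualities already available in the excerpt: $g\in a\{1,2,4\}\Leftrightarrow g^*\in a^*\{1,2,3\}$ (equation (\ref{eq157}), applied to $a^*$) together with $a\<* b\Leftrightarrow a^*\*< b^*$. Put $a'=a^*$ and $h'=h^*$; then $h'\in a'\{1,2,3\}$, and the idempotents attached to $h'$ are $a'h'=(ha)^*=q$ (self-adjoint) and $h'a'=(ah)^*=p^*$. Applying Theorem~\ref{th1515} to $a'$ with the $\{1,2,3\}$-inverse $h'$ yields that $a'\*< b'$ if and only if
$$b'=\bmatrix a' & 0 \\ b_4'u' & b_4' \endbmatrix_{q\times p^*}$$
for some $b_4'\in(1-q)R(1-p^*)$ and some $u'\in Rp^*$.

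Finally I would transport this back through the involution. Writing $b'=b^*$ and applying the block-transpose rule (\ref{eq15matr_rep_za_x_zvezda}) to $b=(b')^*$, the row decomposition becomes $1=p+(1-p)$ and the column decomposition $1=q+(1-q)$: the $(1,1)$-entry becomes $(a')^*=a$, the lower-left entry $b_4'u'$ transposes into the upper-right slot as $(b_4'u')^*=(u')^*(b_4')^*$, the upper-right $0$ becomes the lower-left $0$, and the diagonal entry becomes $(b_4')^*$. Setting $b_4=(b_4')^*$ and $u=(u')^*$ gives
$$b=\bmatrix a & ub_4 \\ 0 & b_4 \endbmatrix_{p\times q},$$
while the membership conditions convert to $b_4\in(1-p)R(1-q)$ (from $b_4'\in(1-q)R(1-p^*)$) and $u\in pR$ (from $u'\in Rp^*$), which is exactly the asserted form. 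The only delicate point is this bookkeeping under the involution, namely tracking that $(1-p^*)^*=1-p$ and $q^*=q$ and that the off-diagonal block swaps from the $(2,1)$- to the $(1,2)$-position; everything else is formal. If one prefers to avoid duality, the same result follows by repeating the argument of Theorem~\ref{th1515} verbatim with a $\{1,2,4\}$-inverse: parametrize $g\in a\{1,2,4\}$ by its block form $g=\bmatrix h & u \\ 0 & 0 \endbmatrix_{q\times p}$, impose $ag=bg$ and $ga=gb$, and read off the blocks of $b$.
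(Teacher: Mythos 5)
Your proof is correct, and in substance it is the proof the paper intends: the paper prints no argument for this theorem, since Section 5 opens by declaring all of its results duals of Section 4, ``proved in a very similar (analogous) way,'' with proofs omitted. Your involution transport of Theorem \ref{th1515} is exactly that omitted dual argument made rigorous: $h\in a\{1,2,4\}$ iff $h^*\in a^*\{1,2,3\}$ (equation (\ref{eq157}) applied to $a^*$), the duality $a\<* b \Leftrightarrow a^*\*< b^*$, the attached idempotents $a^*h^*=(ha)^*=q$ and $h^*a^*=p^*$, and the block-adjoint rule (\ref{eq15matr_rep_za_x_zvezda}); your bookkeeping ($q^*=q$, $(1-p^*)^*=1-p$, $b_4'\in(1-q)R(1-p^*)$ becoming $b_4\in(1-p)R(1-q)$, $u'\in Rp^*$ becoming $u\in pR$, the $(2,1)$ block moving to the $(1,2)$ slot) is all correct, and since every step is an equivalence the ``if and only if'' survives the transport. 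You are also right that the printed ``$a\*< b$'' is a typo for $a\<* b$: in this section $q=ha$ is the self-adjoint idempotent while $p=ah$ need not be, and your counterexample genuinely refutes the left-star reading --- with your $a$ and $h$, taking $u=0$ and $b_4=\bmatrix 0 & 1 \\ 0 & 2 \endbmatrix\in(1-p)R(1-q)$ gives $b=\bmatrix 1 & 1 \\ 1 & 2 \endbmatrix$, for which $a^*b=\bmatrix 2 & 3 \\ 0 & 0 \endbmatrix\neq\bmatrix 2 & 0 \\ 0 & 0 \endbmatrix=a^*a$, while $aa^*=ba^*$ and $b$ is invertible, so indeed $a\<* b$ but not $a\*< b$. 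Your closing alternative --- rerunning the Theorem \ref{th1515} computation with $g=\bmatrix h & u \\ 0 & 0 \endbmatrix_{q\times p}\in a\{1,2,4\}$ --- is literally the ``analogous proof'' the paper alludes to; the only nit there is to record the membership $u\in qR(1-p)$ in that parametrization, as in the first theorem of Section 5.
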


The following canonical matrix representations is characteristic for every $g$-based semi-complete relation.

\begin{theorem}\label{th1513}
  Let $a,b\in R^{(1,4)}$. Fix $h\in b\{1,2,4\}$ and set
    \begin{align*}
    &p_1=ah, \quad p_2=(b-a)h, \quad p_3=1-bh\\
    &q_1=ha, \quad q_2=h(b-a), \quad q_3=1-hb.
    \end{align*}
The following statements are equivalent
\begin{enumerate}[{\rm (i)}]
\item $a\<* b$;

\item The equalities
$$1=p_1+p_2+p_3 \quad \text{and} \quad 1=q_1+q_2+q_3$$
are respectively a decomposition and an orthogonal decomposition of the identity of the ring $R$ with respect to which $a$ and $b$ have the following matrix forms:
    \begin{equation}\label{decomposition main}
      a=\left[
      \begin{array}{ccc}
        a & 0 & 0 \\
        0 & 0 & 0 \\
        0 & 0 & 0
      \end{array}
      \right]_{p\times q}, \quad b=\left[
      \begin{array}{ccc}
        a & 0 & 0 \\
        0 & b-a & 0 \\
        0 & 0 & 0
      \end{array}
      \right]_{p\times q},
    \end{equation}
\end{enumerate}
where $a$ is $(p_1,q_1)$-invertible with $a^-_{p_1,q_1}=hah$ and $b-a$ is $(p_2,q_2)$-invertible with $(b-a)^-_{p_2,q_2}=h-hah$.
\end{theorem}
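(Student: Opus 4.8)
The plan is to deduce this theorem from its already established dual, Theorem \ref{th158}, by passing to adjoints. Recall from the discussion following Theorem \ref{th1510} that $a\<* b$ holds if and only if $a^*\*< b^*$, and recall the equivalence (\ref{eq157}), namely $g\in c\{1,2,3\}\Leftrightarrow g^*\in c^*\{1,2,4\}$. Applying the latter with $c=b^*$ shows that $h\in b\{1,2,4\}$ if and only if $\tilde h:=h^*\in b^*\{1,2,3\}$; in particular, $a,b\in R^{(1,4)}$ forces $a^*,b^*\in R^{(1,3)}$, so Theorem \ref{th158} is applicable to the pair $a^*\*< b^*$ with the distinguished inverse $\tilde h$.

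First I would run Theorem \ref{th158} on $a^*$ and $b^*$ with $\tilde h=h^*$ and record the idempotents it produces, say $\tilde p_i$ and $\tilde q_i$. Because $*$ is an anti-automorphism, a direct computation yields the key bookkeeping identities
\begin{align*}
\tilde p_1&=a^*\tilde h=(ha)^*=q_1^*, & \tilde p_2&=(b-a)^* h^*=q_2^*, & \tilde p_3&=1-(hb)^*=q_3^*,\\
\tilde q_1&=\tilde h a^*=(ah)^*=p_1^*, & \tilde q_2&=h^*(b-a)^*=p_2^*, & \tilde q_3&=1-(bh)^*=p_3^*,
\end{align*}
so that $\tilde p_i=q_i^*$ and $\tilde q_i=p_i^*$ for all $i$. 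Since the involution carries idempotents to idempotents and preserves self-adjointness (and since orthogonality $e_if_j=0$ is equivalent to $f_j^*e_i^*=0$), the decomposition $1=\tilde p_1+\tilde p_2+\tilde p_3$ being \emph{orthogonal} translates, after taking adjoints, into all the $q_i$ being self-adjoint idempotents; that is, $1=q_1+q_2+q_3$ is an orthogonal decomposition of $1$. Dually, $1=\tilde q_1+\tilde q_2+\tilde q_3$ being merely a decomposition yields that $1=p_1+p_2+p_3$ is a decomposition. This is exactly the swap of sides predicted by passing from $\*<$ to $\<*$, and it is the one point that requires care: because $*$ reverses products, orthogonality migrates from the first family of idempotents to the second.

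Next I would transport the remaining conclusions by taking adjoints throughout. Applying $*$ to the block-diagonal forms of $a^*$ and $b^*$ via the transpose-conjugate rule (\ref{eq15matr_rep_za_x_zvezda}), together with $\tilde p_i^*=q_i$ and $\tilde q_i^*=p_i$, returns precisely the stated matrix forms of $a$ and $b$ with respect to $p\times q$. Likewise, Theorem \ref{th158} gives that $a^*$ is $(\tilde p_1,\tilde q_1)$-invertible with inverse $\tilde h a^*\tilde h=(hah)^*$ and that $b^*-a^*$ is $(\tilde p_2,\tilde q_2)$-invertible with inverse $\tilde h-\tilde h a^*\tilde h=(h-hah)^*$; taking adjoints of the defining relations $a^*(hah)^*=\tilde p_1$, $(hah)^*a^*=\tilde q_1$ (and their analogues for $b-a$) shows that $a$ is $(p_1,q_1)$-invertible with $a^-_{p_1,q_1}=hah$ and that $b-a$ is $(p_2,q_2)$-invertible with $(b-a)^-_{p_2,q_2}=h-hah$. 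Since every step in the reduction is an equivalence, this simultaneously proves both implications (i)$\Leftrightarrow$(ii).

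Finally, I do not expect a genuine obstacle here, since the argument is a mechanical dualization; the only thing to watch is the index/side bookkeeping described above. As an alternative one could mimic the proof of Theorem \ref{th158} directly: invoke the semi-completeness of the $g$-map $\mathcal{G}(a)=a\{1,2,4\}$ and Theorem 3.1 of \cite{Jedinstvena_teorija} for the general decomposition, then establish the $\<*$-specific facts by proving $q_1=ha=ga$ self-adjoint through the adjoint of the chain (\ref{eq1512}) and verifying the two $(p,q)$-inverses by a direct computation. I would nonetheless prefer the adjoint reduction, as it reuses Theorem \ref{th158} verbatim and avoids reproving the general part.
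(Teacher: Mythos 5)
Your proposal is correct, and it takes a genuinely different route from the paper. The paper gives no separate proof of this theorem: Section 5 opens by declaring that all dual results "can be proved in a very similar (analogous) way" and omits the proofs, so the intended argument is to re-run the proof of Theorem \ref{th158} with the semi-complete $g$-map $\mathcal{G}(a)=a\{1,2,4\}$, i.e.\ invoke Theorem 3.1 of \cite{Jedinstvena_teorija} for the general decomposition, prove self-adjointness of $q_1=ha$ by the mirror image of the chain (\ref{eq1512}), and verify the two $(p_i,q_i)$-inverses by direct computation --- exactly the alternative you sketch in your closing paragraph. Your primary route instead reduces the theorem \emph{to} Theorem \ref{th158} itself via the involution, using $a\<* b\Leftrightarrow a^*\*< b^*$ (the substitution $a\mapsto a^*$, $b\mapsto b^*$ in the remark following Theorem \ref{th1510}) and (\ref{eq157}) to match $h\in b\{1,2,4\}$ with $h^*\in b^*\{1,2,3\}$; your dictionary $\tilde p_i=q_i^*$, $\tilde q_i=p_i^*$ is correct, and you rightly flag the one delicate point, namely that since $*$ reverses products, the orthogonality (self-adjointness) migrates from the first family of idempotents in Theorem \ref{th158} to the second family here, which is precisely how the statement of the theorem swaps "orthogonal decomposition" and "decomposition." The transfer of the block forms via (\ref{eq15matr_rep_za_x_zvezda}) and of the $(p_1,q_1)$- and $(p_2,q_2)$-invertibility by taking adjoints of the defining equations is also sound, and because each translation step is an equivalence, both implications follow at once. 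What your approach buys is economy and rigor: it replaces an informal "proofs are analogous" by a precise formal dualization that would in fact dispose of every result in Section 5 simultaneously; what the paper's approach buys is self-containedness in the $\{1,2,4\}$ setting, avoiding the index and side bookkeeping that your reduction must (and does) get right.
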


\begin{theorem}\label{th1512}
Let $a\in R^{(1,4)}$ and $b\in R$. Then $a\<* b$ if and only if there exist idempotent $p$ and self-adjoint idempotent $q$ such that $a=pb=bq$.
\end{theorem}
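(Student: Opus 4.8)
The plan is to prove this as the exact right-star dual of Theorem~\ref{th1511}, carrying over its two-line argument with $\{1,2,4\}$-inverses in place of $\{1,2,3\}$-inverses. There are two natural routes. The self-contained one mirrors the proof of Theorem~\ref{th1511} almost verbatim, only swapping the roles of the two idempotents, since for $g\in a\{1,2,4\}$ it is the factor $ga$ (not $ag$) that is self-adjoint. Alternatively, one can reduce to Theorem~\ref{th1511} outright via the involution: by $(\ref{eq157})$ together with Theorem~\ref{th1510} one has $a\<* b \Leftrightarrow a^*\*< b^*$ (and $a\in R^{(1,4)}$ forces $a^*\in R^{(1,3)}$), so applying Theorem~\ref{th1511} to the pair $(a^*,b^*)$ produces a self-adjoint idempotent $P$ and an idempotent $Q$ with $a^*=Pb^*=b^*Q$; taking adjoints gives $a=bP=Q^*b$, and setting $q=P$, $p=Q^*$ yields the desired factorization. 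I would present the direct argument and mention this shortcut.

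For the forward direction, assume $a\<* b$. By Theorem~\ref{th1510} and the definition of $<^{\dag -}$ there is $g\in a\{1,2,4\}$ with $ag=bg$ and $ga=gb$. Put $p=ag$ and $q=ga$. Then $q$ is a self-adjoint idempotent and $p$ is an idempotent (here $\{4\}$ supplies self-adjointness of $ga$, while $\{1,2\}$ make both $ag$ and $ga$ idempotent). Multiplying $ag=bg$ on the right by $a$ and using $aga=a$ gives $a=bga=bq$; multiplying $ga=gb$ on the left by $a$ gives $a=agb=pb$. Hence $a=pb=bq$ with $p$ idempotent and $q$ self-adjoint idempotent.

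For the converse, suppose $a=pb=bq$ with $p$ idempotent and $q$ self-adjoint idempotent. From $a=pb$ we read off $Ra=Rpb\subseteq Rb$. From $a=bq$ and $q^2=q=q^*$ we get $aq=bq^2=bq=a$, hence $a^*=qa^*$, and therefore $ba^*=bqa^*=(bq)a^*=aa^*$. Thus $aa^*=ba^*$ and $Ra\subseteq Rb$, which is exactly $a\<* b$. I do not expect a genuine obstacle here: the only point requiring care is keeping track of which idempotent is self-adjoint, since the roles of $p$ and $q$ are interchanged relative to the left-star situation of Theorem~\ref{th1511}; everything else is a routine transcription of that proof under the involution.
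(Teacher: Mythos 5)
Your proof is correct and is essentially the paper's own approach: Section 5 omits proofs precisely because they are the verbatim duals of their Section 4 counterparts, and your direct argument is exactly the dual transcription of the proof of Theorem \ref{th1511} (taking $p=ag$, $q=ga$ for $g\in a\{1,2,4\}$ in the forward direction, and verifying $aa^*=ba^*$, $Ra\subseteq Rb$ in the converse). Your alternative reduction via the involution, using (\ref{eq157}) and $a\<* b \Leftrightarrow a^*\*< b^*$, is also valid and fully consistent with the paper's framework.
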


The relationships with inclusions of appropriate subsets of $g$-inverses is presented in the next theorem.

\begin{theorem}
Let $a,b\in R^{(1,4)}$. Then
$$a\<* b \; \Leftrightarrow \; b\{1,4\}\subseteq a\{1,4\}.$$
Also, if $a\<* b$ then the pair $(a,b)$ satisfies the (T)-condition, that is,
$$hah\in a\{1,2,4\}, \; \forall h\in b\{1,2,4\}.$$
\end{theorem}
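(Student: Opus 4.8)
The plan is to obtain this statement from its already-proved dual, Theorem~\ref{th157}, by transporting everything through the involution rather than repeating the block computation. The engine is the adjoint correspondence between the two families of inverses. Unwinding the definitions, $g\in a\{1,4\}$ means $aga=a$ and $(ga)^*=ga$; taking adjoints gives $a^*g^*a^*=a^*$ together with $(a^*g^*)^*=ga=a^*g^*$, so $g^*\in a^*\{1,3\}$, and the converse is symmetric. Hence
$$ g\in a\{1,4\} \;\Leftrightarrow\; g^*\in a^*\{1,3\}, \qquad g\in a\{1,2,4\} \;\Leftrightarrow\; g^*\in a^*\{1,2,3\}, $$
the second equivalence being exactly (\ref{eq157}) read with $a$ replaced by $a^*$. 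In particular, since $a,b\in R^{(1,4)}$ we obtain $a^*,b^*\in R^{(1,3)}$, so Theorem~\ref{th157} is available for the pair $(a^*,b^*)$.

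For the main equivalence I would first record the order duality $a\<* b \Leftrightarrow a^*\*< b^*$, which follows from the already-noted equivalence $a\*< b\Leftrightarrow a^*\<* b^*$ upon substituting $a^*,b^*$ for $a,b$. It then remains to match the two inclusions, i.e.\ to show
$$ b\{1,4\}\subseteq a\{1,4\} \;\Leftrightarrow\; b^*\{1,3\}\subseteq a^*\{1,3\}. $$
This is immediate: the map $x\mapsto x^*$ is a bijection of $R$ carrying $b\{1,4\}$ onto $b^*\{1,3\}$ and $a\{1,4\}$ onto $a^*\{1,3\}$ by the first paragraph, and a bijection both preserves and reflects inclusions between the images of two fixed sets. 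Chaining these,
$$ a\<* b \;\Leftrightarrow\; a^*\*< b^* \;\Leftrightarrow\; b^*\{1,3\}\subseteq a^*\{1,3\} \;\Leftrightarrow\; b\{1,4\}\subseteq a\{1,4\}, $$
where the middle step is Theorem~\ref{th157} applied to $(a^*,b^*)$.

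For the $(T)$-condition I would dualize Theorem~\ref{th156}(i) in the same spirit. Assuming $a\<* b$, hence $a^*\*< b^*$, that theorem yields $ka^*k\in a^*\{1,2,3\}$ for every $k\in b^*\{1,2,3\}$. Given any $h\in b\{1,2,4\}$, put $k=h^*$, which lies in $b^*\{1,2,3\}$ by the correspondence above; then $ka^*k\in a^*\{1,2,3\}$, and taking adjoints gives $(ka^*k)^*=hah$, which belongs to $a\{1,2,4\}$ again by the correspondence. Thus $hah\in a\{1,2,4\}$ for all $h\in b\{1,2,4\}$. Alternatively, since the $g$-map $\mathcal{G}(a)=a\{1,2,4\}$ is semi-complete with completion $a\{1,4\}$, the $(T)$-condition and the inclusion $b\{1,4\}\subseteq a\{1,4\}$ are equivalent by the unified theory, so this part is already subsumed by the main equivalence once it is established.

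The argument is essentially bookkeeping, and the only real care needed is to keep the two correspondences distinct, the self-dual $\{1,4\}\leftrightarrow\{1,3\}$ one and the $\{1,2,4\}\leftrightarrow\{1,2,3\}$ one of (\ref{eq157}), and to run the order duality in the correct direction. A self-contained alternative would imitate the direct proof of Theorem~\ref{th157} with left and right interchanged and $\{1,3\}$ replaced by $\{1,4\}$; there the genuine obstacle is the ``if'' direction, where one fixes $h\in b\{1,2,4\}$, writes $a$ in the block form determined by $p=bh$ and $q=hb=(hb)^*$, and exploits the freedom in the free blocks of a general $g\in b\{1,4\}$ to force the extraneous blocks of $a$ to vanish. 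Routing the proof through the involution sidesteps exactly this computation, which is why I would present the dualization.
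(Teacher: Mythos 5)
Your proof is correct, but it takes a genuinely different route from the paper: in Section 5 the paper omits the proof entirely, stating that all the dual results ``can be proved in a very similar (analogous) way'', i.e.\ by repeating the block-matrix computations of Section 4 with the two sides interchanged and $\{1,3\}$ replaced by $\{1,4\}$. You instead transport the already-proved left-star statements (Theorems \ref{th157} and \ref{th156}) through the involution, using the correspondences $g\in a\{1,4\}\Leftrightarrow g^*\in a^*\{1,3\}$ and $g\in a\{1,2,4\}\Leftrightarrow g^*\in a^*\{1,2,3\}$ together with the order duality $a\<* b\Leftrightarrow a^*\*< b^*$. All the steps check out: the adjoint correspondence is verified correctly, $a,b\in R^{(1,4)}$ does give $a^*,b^*\in R^{(1,3)}$ so that Theorem \ref{th157} applies to the pair $(a^*,b^*)$, the injectivity of $x\mapsto x^*$ lets you pass the inclusion $b\{1,4\}\subseteq a\{1,4\}$ back and forth to $b^*\{1,3\}\subseteq a^*\{1,3\}$, and the adjoint of $h^*a^*h^*\in a^*\{1,2,3\}$ is indeed $hah\in a\{1,2,4\}$. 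What your approach buys is economy and a precise meaning for the word ``dual'': nothing from Section 4 has to be recomputed, and the duality becomes a theorem-level transfer rather than a claim of analogy. What the paper's implicit approach buys is self-containedness of each dual statement, at the cost of redundancy. One small improvement: derive the order duality $a\<* b\Leftrightarrow a^*\*< b^*$ directly from Definition \ref{def151} and the definition of $\<*$ (take adjoints in $aa^*=ba^*$, and note $Ra\subseteq Rb \Leftrightarrow a^*R\subseteq b^*R$), rather than obtaining it by substitution from the remark following Theorem \ref{th1510}; that remark is justified in the paper via Theorem \ref{th1510}, which is itself one of the unproved dual results, so citing it invites an appearance of circularity that the one-line definitional argument removes.
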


As a direct consequence of previous theorem, we conclude that $\<*$ is transitive. We already know that it is reflexive and antisymmetric, so it is a partial order relation.

\begin{corollary}
The relation $\<*$, that is, the relation $<^{\dag -}$, is a partial order relation on $R^{(1,4)}$. 
\end{corollary}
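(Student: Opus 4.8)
The plan is to verify the three defining properties of a partial order — reflexivity, antisymmetry, and transitivity — on the set $R^{(1,4)}$, reusing the machinery already in place and transporting the earlier $\*<$ arguments through the left–right symmetry.

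First I would dispose of reflexivity directly from the definition. Since $R^{(1,2,4)}=R^{(1,4)}$, every $a\in R^{(1,4)}$ admits some $g\in a\{1,2,4\}$, and then the equalities $ag=ag$ and $ga=ga$ hold trivially, so $a<^{\dag -}a$; by Theorem \ref{th1510} this is precisely $a\<* a$. For antisymmetry I would invoke the Proposition asserting $a<^{\dag -}b \Rightarrow a<^-b$: if both $a\<* b$ and $b\<* a$ hold with $a,b\in R^{(1,4)}$, then $a<^-b$ and $b<^-a$, and since $<^-$ is a partial order on $R^{(1)}$ (and $R^{(1,4)}\subseteq R^{(1)}$) its antisymmetry forces $a=b$.

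The only substantive step is transitivity, and here I would lean on the inclusion characterization $a\<* b \Leftrightarrow b\{1,4\}\subseteq a\{1,4\}$, valid for $a,b\in R^{(1,4)}$ (the dual of Theorem \ref{th157}). Given $a\<* b$ and $b\<* c$ with $a,b,c\in R^{(1,4)}$, this yields $b\{1,4\}\subseteq a\{1,4\}$ and $c\{1,4\}\subseteq b\{1,4\}$; since set inclusion is transitive we obtain $c\{1,4\}\subseteq a\{1,4\}$, and the same characterization returns $a\<* c$.

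I do not expect any genuine obstacle here: all the heavy lifting has already been carried out in the inclusion theorem and in the reduction $<^{\dag -}\Rightarrow <^-$, so the corollary is a bookkeeping assembly of facts established earlier. The one point deserving a word of care is that transitivity is an assertion about triples inside $R^{(1,4)}$, so applying the inclusion characterization is legitimate precisely because attention has been restricted to this set from the outset.
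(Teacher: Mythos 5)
Your proposal is correct and follows essentially the same route as the paper: reflexivity directly from the definition via $R^{(1,2,4)}=R^{(1,4)}$, antisymmetry through the reduction $a<^{\dag -}b \Rightarrow a<^-b$ and the antisymmetry of the minus order, and transitivity from the inclusion characterization $a\<* b \Leftrightarrow b\{1,4\}\subseteq a\{1,4\}$. The paper states these steps more tersely (citing the earlier remarks and the preceding theorem), but the logical content is identical.
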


It is natural to see the connection between one-sided star orders and star order $<^*$ which is defined by Drazin in \cite{Natural structures}:
$$a<^*b \; \Leftrightarrow \; aa^*=ba^* \text{ and } a^*a=a^*b.$$
Recall that $<^*$ is a partial order relation in arbitrary semigroup with proper involution, \cite{Natural structures}.

The star order has recently been examined in an arbitrary ring with involution in \cite{Star_sharp_core_order_in_ring}. It was shown that the relation $<^*$ is a partial order relation on $R^\dag$. The following is one of the basic characterization of the star order when $a,b\in R^\dag$, Theorem 2.6 in \cite{Star_sharp_core_order_in_ring}
\begin{equation}\label{eq158}
a<^*b \; \Leftrightarrow \; a=pb=br \text{ for some self-adjoint idempotents } p \text{ and } r.
\end{equation}

From theorems \ref{th1511} and \ref{th1512} and characterization (\ref{eq158}) we obtain the following expected result.

\begin{theorem}
Let $a\in R^\dag$ and $b\in R$. Then
$$a<^*b \; \Leftrightarrow \; a\*< b \text{ and } a\<* b.$$
\end{theorem}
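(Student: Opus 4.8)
The plan is to prove both implications by converting each of the three relations into a one-sided idempotent factorization of $a$ through $b$ and then matching up the idempotent types, using Theorem \ref{th1511} for $\*<$, Theorem \ref{th1512} for $\<*$, and characterization (\ref{eq158}) for $<^*$.

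For the $(\Leftarrow)$ direction I would argue as follows. Assume $a\*< b$ and $a\<* b$. Theorem \ref{th1511} applied to $a\*< b$ produces a self-adjoint idempotent $p$ (and an idempotent $q$) with $a=pb$, while Theorem \ref{th1512} applied to $a\<* b$ produces a self-adjoint idempotent $q'$ (and an idempotent $p'$) with $a=bq'$. Thus $a=pb=bq'$ with both $p$ and $q'$ self-adjoint idempotents, and the right-to-left implication of (\ref{eq158}) yields $a<^*b$. Since this implication is the elementary one, I would verify it directly so as not to lean on the hypothesis $b\in R^\dag$ in the quoted form of (\ref{eq158}): from $a=pb$ with $p=p^*$ one computes $a^*a=b^*pb=a^*b$, and from $a=bq'$ with $q'=q'^*$ one computes $aa^*=bq'b^*=ba^*$, which are precisely the two defining equalities of $<^*$.

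For the $(\Rightarrow)$ direction I would start from $a<^*b$, i.e. $aa^*=ba^*$ and $a^*a=a^*b$, and exploit $a\in R^\dag$ by setting $p=aa^\dag$ and $r=a^\dag a$, which are self-adjoint idempotents. Multiplying $a^*a=a^*b$ on the left by $(a^\dag)^*$ and using $(a^\dag)^*a^*=(aa^\dag)^*=aa^\dag$ together with $aa^\dag a=a$ gives $a=pb$; symmetrically, multiplying $aa^*=ba^*$ on the right by $(a^\dag)^*$ and using $a^*(a^\dag)^*=(a^\dag a)^*=a^\dag a$ gives $a=br$. Hence $a=pb=br$ with $p,r$ self-adjoint idempotents. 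Reading this factorization with $p$ as the self-adjoint left factor and $r$ (a fortiori an idempotent) as the right factor, Theorem \ref{th1511} gives $a\*< b$; reading it with $r$ as the self-adjoint right factor and $p$ as the left idempotent, Theorem \ref{th1512} gives $a\<* b$.

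The single point requiring care is the bookkeeping of idempotent types: Theorem \ref{th1511} asks only the left factor to be self-adjoint, Theorem \ref{th1512} only the right factor, while (\ref{eq158}) asks both. The reconciling observation is that a self-adjoint idempotent is in particular an idempotent, so the two one-sided factorizations coming from $\*<$ and $\<*$ assemble into the two-sided self-adjoint factorization characterizing $<^*$, and conversely. I do not anticipate a genuine obstacle; the only subtlety is to manufacture the self-adjoint idempotents $aa^\dag$ and $a^\dag a$ explicitly in the $(\Rightarrow)$ direction rather than invoking the left-to-right implication of (\ref{eq158}), whose quoted form assumes $b\in R^\dag$ while here $b$ is an arbitrary ring element.
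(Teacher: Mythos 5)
Your proof is correct, and structurally it follows the paper's intended argument: the paper's entire proof is the one-line remark that the theorem follows from Theorems \ref{th1511} and \ref{th1512} together with characterization (\ref{eq158}). Where your write-up genuinely differs --- and improves on the paper --- is in refusing to use (\ref{eq158}) as a black box. As quoted in the paper, (\ref{eq158}) is stated for $a,b\in R^\dag$, while the theorem allows arbitrary $b\in R$, so invoking it directly in either direction carries a hypothesis mismatch (unless one checks that the cited Theorem 2.6 of \cite{Star_sharp_core_order_in_ring} holds under weaker assumptions). You close this gap in both directions: for $(\Leftarrow)$ you verify $a^*a=b^*pb=a^*b$ and $aa^*=bq'b^*=ba^*$ directly from the factorizations $a=pb$ and $a=bq'$ with $p,q'$ self-adjoint idempotents, which needs no invertibility of $b$ at all; for $(\Rightarrow)$ you construct the self-adjoint idempotents explicitly as $p=aa^\dag$ and $r=a^\dag a$, obtaining $a=pb$ from $a^*a=a^*b$ multiplied on the left by $(a^\dag)^*$ and $a=br$ from $aa^*=ba^*$ multiplied on the right by $(a^\dag)^*$, using only $a\in R^\dag$. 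The bookkeeping of idempotent types (self-adjoint left factor for $\*<$ via Theorem \ref{th1511}, self-adjoint right factor for $\<*$ via Theorem \ref{th1512}, both self-adjoint for $<^*$) is handled exactly right, and the hypotheses $a\in R^{(1,3)}$ and $a\in R^{(1,4)}$ required by those theorems follow from $a\in R^\dag$. The net effect is a self-contained proof of the theorem precisely as stated, which is slightly stronger than what the paper's citation-only proof literally establishes.
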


\section{Concluding remarks}

We have considered several things in this paper. After examining the 1MP and MP1 inverses and associated partial orders $<^{-\dag}$ and $<^{\dag -}$ in the complex matrix case, we introduced and studied these notions in the context of an arbitrary ring with involution. Beside, we introduced the left-star $\*<$ and right-star $\<*$ partial orders by analogy with matrix case and showed that these orders coincide on $R^{(1,3)}$ with $<^{-\dag}$ and $<^{\dag -}$, respectively. After that we investigated the left-star order through the prism of unified theory of partial orders based on generalized inverses. The dual results of $\{1,2,4\}$-inverses, $<^{\dag -}$ order and right-star order are also presented.

It is worth to say that the orders $\*<$ and $\<*$ are also defined and studied in the context of Rickart $*$-rings, see Section 3 in \cite{Rikart zvezda} and \cite{Kremere}. The two generalizations are in the complete agreement because one can show that the left-star order in the sense of Definition \ref{def151} coincide with the left-star order given in Definition 10 in \cite{Rikart zvezda} when $a\in R^{(1,3)}$. This fact can be proved using characterization given in Theorem \ref{th1511}. But each of these two cases has its own specificity and its own proving techniques. Let us also mention the closely related diamond partial order which is defined for matrices in \cite{BaksalaryHauke} and also studied in rings in \cite{Thome3}.

The inspiration for most of the results and proofs in this paper come from the operator theory. For instance, the results that we develop can find nice applications in $B(H)$ - the algebra of bounded operators on Hilbert space $H$, in $C^*$-algebras, or in Rickart $*$-rings.
We will conclude this paper by indicating how the presented results can be applied in the case of Hilbert space operators.

Let $H$ and $K$ be Hilbert spaces and let $B(H,K)$ denote the set of all bounded linear operators from $H$ to $K$. Let $B(H)=B(H,H)$. Like in the matrix case, for $A\in B(H,K)$, we use $A^*$, $\R A$ and $\N A$ to denote respectively the Hilbert adjoint of $A$, the range and the null-space of $A$. As usual, $I$ stands for the identity operator. We write $H_1\dotplus H_2$ for the direct sum of subspaces and we write $H_1\oplus H_2$ for the orthogonal direct sum.
The different generalized inverses of $A$ are defined by the same equations as in the matrix (ring) case.
Recall that $A\in B(H,K)$ has a $g$-inverse if and only if it has the Moore-Penrose inverse if and only if $\R A$ is closed in $K$.

Let $A,B\in B(H,K)$ be operators with closed ranges. We can define relations $<^{-\dag}$ and $\*<$ like in the matrix case, using definitions (\ref{eq15<-+}) and (\ref{eq15*<}), respectively.
By Lemma 2.1 in \cite{prvi_rad} we know that $\R A\subseteq \R B$ if and only if $A=BC$ for some $C\in B(H)$. Because of that, from Theorem \ref{th155}, it follows that the relations defined by (\ref{eq15<-+}) and by (\ref{eq15*<}) are the same in the operator case as well. From the proof of Theorem \ref{th155} one can see that the technical obstacle that $B(H,K)$ is not a ring has no effect on the validity of this claim.

We will now give the interpretation of Theorem \ref{th158}. The interpretation of other results can be achieved by similar reasoning.
Let us follow the notation as in Theorem \ref{th158} and its proof.
By (\ref{eq1512}), we have
\begin{equation}\label{eq1511}
\R P_1=\R(AH)=\R(AG)=\R A.
\end{equation}
Recall that $A\*< B$ implies $B-A\*< B$ by (\ref{eq1510}). Now, by the same argument as in (\ref{eq1511}), we conclude that
$\R P_2=\R(B-A)$.
From the same reason as in the proof of Theorem \ref{th1516}, we have that $\N H=\N B^*$. Therefore,
\begin{equation*}
\R P_3=\R(I-BH)=\N(BH)=\N H=\N B^*
\end{equation*}
In \cite{Decomposition_of_identity}, the connection of the decomposition of the identity $I$ of ring $B(K)$ and topological direct sum was presented in detail. From that connection, it follows that $K=\R P_1\oplus \R P_2\oplus \N P_3$, so
$$K=\R A\oplus \R(B-A)\oplus \N B^*,$$
where the above decomposition is an orthogonal direct sum decomposition of $K$.
Finally, note that $\R Q_3=\R(I-HB)=\N(HB)=\N B$. We are now in a position to restate Theorem \ref{th158} in the operator case.

\begin{theorem}
Let $A,B\in B(H,K)$ be operators with closed ranges. Then $A\*<B$ if and only if the following hold
\begin{enumerate}[(i)]
\item There exist closed subspaces $H_1$ and $H_2$ of $H$ such that
$H=H_1\dotplus H_2\dotplus \N B;$

\item $K=\R A\oplus \R(B-A)\oplus \N B^*$;

\item $A,B:H_1\dotplus H_2\dotplus \N B \rightarrow \R A\oplus \R(B-A)\oplus \N B^*$ have matrix representations
$$      A=\left[
      \begin{array}{ccc}
        A_1 & 0 & 0 \\
        0 & 0 & 0 \\
        0 & 0 & 0
      \end{array}
      \right], \quad B=\left[
      \begin{array}{ccc}
        A_1 & 0 & 0 \\
        0 & B_1 & 0 \\
        0 & 0 & 0
      \end{array}
      \right],$$
where $A_1\in B(H_1,\R A)$ and $B_1\in B(H_2,\R(B-A))$ are invertible operators.
\end{enumerate}
\end{theorem}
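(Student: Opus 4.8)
This is the operator version of Theorem \ref{th158}, so the plan is to transport the ring computation carried out there into $B(H,K)$ through the dictionary between decompositions of the identity and (orthogonal) topological direct sums developed in \cite{Decomposition_of_identity}. Since $\R A$ and $\R B$ are closed, both operators are Moore--Penrose invertible; in particular we may fix a $\{1,2,3\}$-inverse $H$ of $B$ and form the idempotents $P_1=AH$, $P_2=(B-A)H$, $P_3=I-BH$ in $B(K)$ and $Q_1=HA$, $Q_2=H(B-A)$, $Q_3=I-HB$ in $B(H)$, exactly as in Theorem \ref{th158}. The only point requiring care is that $B(H,K)$ is not a ring, so Theorem \ref{th158} cannot be quoted verbatim as a statement about a single ring $R$; however every product occurring in its proof, namely $AH$, $HA$, $BH$, $HB$, $HAH$ and the like, is a legitimate composition of operators with matching domains and codomains, so the algebraic identities of that proof survive unchanged. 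This is the same remark already exploited for Theorem \ref{th155} in the operator setting.

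For the forward direction I would assume $A\*< B$ and invoke the operator form of Theorem \ref{th158}: $I=P_1+P_2+P_3$ is an orthogonal decomposition of the identity of $B(K)$ (the $P_i$ being self-adjoint idempotents), while $I=Q_1+Q_2+Q_3$ is a decomposition of the identity of $B(H)$ (the $Q_i$ being idempotents, not self-adjoint in general). Through \cite{Decomposition_of_identity} the former gives the orthogonal direct sum $K=\R P_1\oplus\R P_2\oplus\R P_3$ and the latter the topological direct sum $H=\R Q_1\dotplus\R Q_2\dotplus\R Q_3$. It then remains only to name the summands. As recorded in the discussion preceding the theorem, $\R P_1=\R A$, $\R P_2=\R(B-A)$ and $\R P_3=\N B^*$, which is exactly (ii), while $\R Q_3=\N B$; putting $H_1=\R Q_1$ and $H_2=\R Q_2$, which are closed as ranges of bounded idempotents, yields (i). Reading the block forms of $a$ and $b$ from Theorem \ref{th158} against these two decompositions produces the matrix representations of (iii) with $A_1=A|_{H_1}$ and $B_1=(B-A)|_{H_2}$, and the $(P_1,Q_1)$- and $(P_2,Q_2)$-invertibility asserted in Theorem \ref{th158}, with inverses $HAH$ and $H-HAH$, is precisely the invertibility of $A_1\colon H_1\to\R A$ and $B_1\colon H_2\to\R(B-A)$.

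For the converse I would assume (i)--(iii) and appeal to Theorem \ref{th1511}, producing a self-adjoint idempotent $P\in B(K)$ and an idempotent $Q\in B(H)$ with $A=PB=BQ$. I would take $P$ to be the orthogonal projection of $K$ onto $\R A$ along $\R(B-A)\oplus\N B^*$, which is self-adjoint precisely because this sum is orthogonal, and $Q$ to be the oblique projection of $H$ onto $H_1$ along $H_2\dotplus\N B$. Reading off the block forms in (iii) gives $PB=A$ and $BQ=A$ immediately, and then Theorem \ref{th1511} delivers $A\*< B$.

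The only genuine obstacle is organisational rather than substantial: one must keep the two decompositions strictly apart, the $P$-side being an orthogonal sum (from self-adjoint idempotents, hence $\oplus$ in $K$) and the $Q$-side merely a topological direct sum (from general idempotents, hence $\dotplus$ in $H$), and one must check that each composition borrowed from the proof of Theorem \ref{th158} is well defined despite $B(H,K)$ failing to be a ring. Once the ranges are identified through \cite{Decomposition_of_identity}, the remaining verifications are routine.
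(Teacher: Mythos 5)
Your proof is correct, and its forward half coincides with the paper's own argument: fix $H\in B\{1,2,3\}$, form the idempotents $P_i,Q_i$ as in Theorem \ref{th158}, observe that every composition involved makes sense even though $B(H,K)$ is not a ring, pass from the two decompositions of the identity to direct-sum decompositions of $K$ and $H$ via \cite{Decomposition_of_identity}, and identify $\R P_1=\R A$, $\R P_2=\R (B-A)$, $\R P_3=\N H=\N B^*$ and $\R Q_3=\N B$ --- exactly the chain of identifications (\ref{eq1511}) and the discussion surrounding it. Where you genuinely depart from the paper is the converse. The paper does not argue it separately: it declares the operator theorem to be a restatement of the equivalence in Theorem \ref{th158}, so its implicit converse runs back through that ring-theoretic result; written out in detail, this would require recovering from the abstract data (i)--(iii) a $\{1,2,3\}$-inverse of $B$ (for instance $H=A_1^{-1}P_1+B_1^{-1}P_2$, with $P_1,P_2$ the orthogonal projections onto $\R A$ and $\R(B-A)$) so that the subspaces in (i)--(ii) become the ranges of the idempotents $AH$, $(B-A)H$, $I-BH$, $HA$, $H(B-A)$, $I-HB$. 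You bypass this entirely by invoking Theorem \ref{th1511}: the orthogonal projection $P$ onto $\R A$ is self-adjoint precisely because the sum in (ii) is orthogonal, the oblique projection $Q$ onto $H_1$ along $H_2\dotplus\N B$ is a bounded idempotent (this deserves one line in a final write-up: all summands are closed and the sum is algebraically direct, so boundedness follows from the open mapping or closed graph theorem), the block forms in (iii) give $A=PB=BQ$, and Theorem \ref{th1511} then yields $A\*< B$. Your converse is more self-contained in the operator setting, needs no reconstruction of a generalized inverse of $B$, and does not even use the invertibility of $A_1$ and $B_1$; what the paper's route buys instead is the explicit dictionary exhibiting the operator theorem as literally Theorem \ref{th158} in disguise. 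Both arguments are sound.
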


\textbf{Acknowledgment}

\vspace{2cm}

Dragan Raki\'c, Faculty of Mechanical Engineering, University of Ni\v s, Aleksandra Medvedeva 14, 18000 Ni\v s, Serbia

Email address: rakic.dragan@gmail.com

\vspace{0.5cm}

Martin Ljubenovi\'c, Faculty of Mechanical Engineering, University of Ni\v s, Aleksandra Medvedeva 14, 18000 Ni\v s, Serbia

Email address: martinljubenovic@gmail.com

\end{document}